\crefname{equation}{}{}
\newtheorem{theorem}{Theorem}[section]
\newtheorem{proposition}[theorem]{Proposition}
\newtheorem{lemma}[theorem]{Lemma}
\newtheorem{conjecture}[theorem]{Conjecture}
\newtheorem*{question*}{Question}
\Crefname{question}{Question}{Questions}
\newtheorem{knowntheorem}{Theorem}
\theoremstyle{definition}
\newtheorem{notation}[theorem]{Notation}
\newtheorem*{notation*}{Notation}
\newtheorem*{example*}{Example}
\theoremstyle{remark}
\newtheorem*{remark}{Remark}
\numberwithin{equation}{section}
\newcommand{\psum}{\ \sideset{}{^*}\sum}
\newcommand{\Z}{\mathbf{Z}}
\newcommand{\R}{\mathbf{R}}
\newcommand{\C}{\mathbf{C}}
\newcommand{\N}{\mathbf{N}}
\renewcommand{\H}{\mathbf{H}}
\newcommand{\msE}{\mathscr{E}}
\newcommand{\mE}{\mathcal{E}}
\newcommand{\mD}{\mathcal{D}}
\newcommand{\mS}{\mathcal{S}}
\newcommand{\mK}{\mathcal{K}}
\newcommand{\mB}{\mathcal{B}}
\newcommand{\mX}{\mathcal{X}}
\newcommand{\mI}{\mathcal{I}}
\newcommand{\ma}{\mathfrak{a}}
\newcommand{\e}{\textnormal{e}}
\newcommand{\one}{\mathbbm{1}}
\renewcommand{\Re}{\textnormal{Re}}
\newcommand{\PSL}{\textnormal{PSL}}
\newcommand{\Id}{\textnormal{Id}}
\newcommand{\eps}{\varepsilon}
\newcommand{\uminus}{\mathbin{\ooalign{$\cup$\cr%
   \hfil\raise0.42ex\hbox{$\scriptscriptstyle\minus$}\hfil\cr}}}
\newcommand{\sgn}{\textnormal{sgn}}
\renewcommand{\bar}{\overline}
\renewcommand{\hat}{\widehat}
\newcommand{\cond}{\textnormal{cond}}
\renewcommand{\mod}[1]{\ \textnormal{mod } #1}
\renewcommand{\pmod}[1]{\ (\textnormal{mod } #1)}
\newcommand{\ch}{\textnormal{ch}}
\newcommand{\cremph}[1]{\emph{\cref{#1}}}
\newcommand{\rad}{\textnormal{rad}}
\newenvironment{psmall}
  {\left(\begin{smallmatrix}}
  {\end{smallmatrix}\right)}
\newenvironment{bsmall}
  {\left[\begin{smallmatrix}}
  {\end{smallmatrix}\right)}
\newcommand{\twoline}[2]{$\substack{\text{\normalsize \vphantom{y}#1} \\ \text{\normalsize \vphantom{y}#2}}$}
\newcommand{\twolinemath}[2]{\let\scriptstyle\textstyle \substack{{#1} \\ {#2}}}
\definecolor{firebrick}{rgb}{0.7, 0.13, 0.13}
\definecolor{myBlue}{rgb}{0, 0, 0.6}
\begin{document}

\title{Smooth numbers in arithmetic progressions to large moduli}
\author[Alexandru Pascadi]{Alexandru Pascadi}
\email{alexpascadi@gmail.com}
\address{Mathematical Institute, Radcliffe Observatory Quarter, Woodstock Road, Oxford OX2 6GG, England}

\begin{abstract}
    We show that smooth numbers are equidistributed in arithmetic progressions to moduli of size $x^{66/107-o(1)}$. This overcomes a longstanding barrier of $x^{3/5-o(1)}$ present in previous works of Bombieri--Friedlander--Iwaniec, Fouvry--Tenenbaum, Drappeau, and Maynard.

    We build on Drappeau's variation of Linnik's dispersion method and on exponential sum manipulations of Maynard, ultimately relying on optimized Deshouillers--Iwaniec type estimates for sums of Kloosterman sums.
\end{abstract}

\maketitle

{
\setlength{\parskip}{0em}
\setcounter{tocdepth}{1}
\tableofcontents
}

\section{Introduction} \label{sec:intro} 
The Bombieri--Vinogradov theorem \cite{bombieri1965large,vinogradov1965density} famously states that for any $x > 2$, $A > 0$, and $B$ sufficiently large in terms of $A$, one has
\begin{equation} \label{eq:bv}
    \sum_{q \le x^{1/2}/(\log x)^B} \max_{(a, q) = 1} 
    \left\vert \pi(x; q, a) - \frac{\pi(x)}{\varphi(q)} \right\vert 
    \ll_A 
    \frac{x}{(\log x)^A},
\end{equation}
where $\pi(x)$ denotes the number of primes up to $x$, and $\pi(x; q, a)$ denotes the number of these primes which are congruent to $a$ modulo $q$. Informally, \cref{eq:bv} states that the primes are well-distributed in arithmetic progressions when averaging over moduli almost as large as $x^{1/2}$. Without the sum over $q$, the Siegel--Walfisz theorem only controls the summand uniformly in the (much smaller) range $q \le (\log x)^A$, and the Generalized Riemann Hypothesis would improve this to $q \le x^{1/2}/(\log x)^B$. Thus \cref{eq:bv} provides an unconditional substitute for GRH when some averaging over $q$ is available; this is very often the case in sieve theory, where results like \cref{eq:bv} have led to multiple major breakthroughs (including, e.g., the existence of infinitely many bounded gaps between primes \cite{zhang2014bounded,polymath2014new,maynard2015small,polymath2014variants}).

We say that the primes have \emph{exponent of distribution} $\alpha < 1$ iff the analogue of \cref{eq:bv} holds true when summing over all moduli $q \le x^\alpha$. The Elliott--Halberstam conjecture \cite{elliott1968conjecture} asserts that $\alpha = 1 - \eps$ works for any $\eps > 0$ (the implied constant depending on $\eps$), but it remains open whether \cref{eq:bv} holds for any $\alpha > 1/2$. Quite remarkably, it \emph{is} possible to go beyond this square-root barrier if one slightly weakens the left-hand side of \cref{eq:bv}, by fixing the residue $a$, assuming various factorization properties of the moduli $q$, and/or replacing the absolute values with suitable weights. On this front, we mention the pioneering work of Fouvry \cite{fouvry1984autour,fouvry1987autour,fouvry1985probleme,fouvry1982repartition} and Fouvry--Iwaniec \cite{fouvry1980theorem,fouvry1983primes}, a series of three papers by Bombieri--Friedlander--Iwaniec \cite{bombieri1986primes,bombieri1987primes2,bombieri1989primes3}, the main estimate in Zhang's work on bounded gaps \cite{zhang2014bounded}, and three recent papers of Maynard \cite{maynard2025primes,maynard2025primes2,maynard2025primes3}; in particular, in \cite{maynard2025primes2}, Maynard achieved exponents of distribution as large as $3/5-\eps$ assuming well-factorable weights.

In this paper, we are concerned with the case of \emph{$y$-smooth} (or \emph{$y$-friable}) numbers rather than primes; the objects of study here are the sets
\[
    S(x, y) := \{n \in \Z_+ : n \le x, \text{ and all prime factors of $n$ are $\le y$}\},
\]
defined by two parameters $x, y \ge 2$, where $y$ will grow like $x^{o(1)}$. To state our main result, we denote
\[
\begin{aligned}
    \Psi(x, y) &:= \# S(x, y), \\
    \Psi_q(x, y) &:= \# \{n \in S(x, y) : (n, q) = 1\}, \\
    \Psi(x, y; a, q) &:= \# \{n \in S(x, y) : n \equiv a \pmod{q}\}.
\end{aligned}
\]
\begin{theorem}[Smooth numbers in APs with moduli beyond $x^{3/5}$] \label{thm:distribution-simplified}
Let $a \in \Z \setminus \{0\}$ and $A, \eps > 0$. Then there exists $C = C(A, \eps) > 0$ such that in the range $x > 2$, $(\log x)^C \le y \le x^{1/C}$, one has
\[
    \sum_{\substack{q \le x^{66/107-\eps} \\ (q, a) = 1}} 
    \left\vert \Psi(x, y; a, q) - \frac{\Psi_q(x, y)}{\varphi(q)}
    \right\vert 
    \ll_{a, A, \eps} 
    \frac{\Psi(x, y)}{(\log x)^A}.
\]
\end{theorem}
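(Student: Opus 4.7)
The plan is to combine Drappeau's implementation of Linnik's dispersion method \cite{drappeau2015theoremes} with Maynard's exponential-sum manipulations \cite{maynard2020primesII}, feeding the output into an optimized Deshouillers--Iwaniec estimate for sums of Kloosterman sums. The combinatorial step will produce bilinear sums whose scales we can tune because of the smoothness of $n$, while the spectral bound will dictate how large the modulus $q$ can be; the exponent $66/107$ should emerge from a delicate balance between these two inputs.

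First I would decompose the indicator of $S(x,y)$ into Type I and Type II convolutions $\alpha \ast \beta$, where $\alpha$ is supported on $[M, 2M]$ and $\beta$ on $[R, 2R]$ with $MR \asymp x$. The distinguishing feature of smooth numbers, making this problem more tractable than the analogous one for primes, is that every $n \in S(x,y)$ admits divisors at essentially every scale up to a factor of $y$; hence $M$ may be chosen inside a wide dyadic range. This flexibility lets us position $M$ exactly where the subsequent Kloosterman input is sharpest, which is ultimately what allows us to push beyond the $3/5$ barrier.

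The main analytic step is then to bound
\[
\sum_{q \sim Q} \lambda_q \Bigl( \sum_{\substack{m \sim M,\, r \sim R \\ mr \equiv a \pmod{q}}} \alpha_m \beta_r \;-\; \frac{1}{\varphi(q)} \sum_{\substack{m \sim M,\, r \sim R \\ (mr, q)=1}} \alpha_m \beta_r \Bigr)
\]
for sign coefficients $|\lambda_q| \le 1$ and for $Q$ up to $x^{66/107-\eps}$. Following Drappeau, one applies Cauchy--Schwarz in the $r$-variable and squares out; expanding and performing Poisson summation in $m$, together with reciprocity for the inner exponentials, produces averages of incomplete Kloosterman sums of the shape $\sum_c \sum_d \sum_n \alpha_c \beta_d g_n\, S(am, n; cd)$. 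Maynard's ingredient enters before the spectral step: a sequence of reciprocity and shift manipulations is used to bring the inner sum into a form whose modulus $cd$ factorizes compatibly with Kuznetsov's trace formula, so that Deshouillers--Iwaniec can be applied along its most favorable diagonal.

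The hardest step will be the final balancing: optimizing simultaneously over the split $Q = MR$, the auxiliary parameters produced by dispersion, and the shape of the Kloosterman moduli, so that every resulting piece is controlled by $x/(Q (\log x)^A)$. I expect the bottleneck to come from the Type II regime in which $M$ is close to $Q^{1/2}$; this is precisely the square-root balance point at which previous works stalled at $3/5$, and crossing it requires the strongest available form of Deshouillers--Iwaniec together with Maynard's rearrangement in order to gain a polynomial saving over the naive Weil-bound threshold. The specific value $66/107$ should fall out of setting several competing constraints to equality in this critical regime.
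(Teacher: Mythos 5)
Your high-level ingredients are the right ones (Drappeau's dispersion, Maynard's exponential-sum rearrangements, an optimized Deshouillers--Iwaniec input, and the flexibility of smooth-number factorizations), but two structural points are off, and the third and most important idea is absent. First, the decomposition must be a \emph{triple} convolution $\alpha_m \beta_n \gamma_\ell$ with $MNL \asymp x$, not a bilinear Type~I/Type~II split $\alpha \ast \beta$: the whole Bombieri--Friedlander--Iwaniec/Drappeau architecture is built around three-fold sums (it is only with three free scales that the dispersion method can be pushed near $x^{2/3}$), and the advantage of smoothness is precisely that one may prescribe $M, N, L$ essentially at will within the constraints of \cref{eq:conditions}. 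Framing the bottleneck as ``the Type~II regime where $M$ is close to $Q^{1/2}$'' misdiagnoses where the obstruction actually lies. Second, there is a deamplification step (an artificial average over an auxiliary prime $e \sim E = x^{o(1)}$ inserted before Cauchy--Schwarz) that is needed to cover the case $M \approx N \approx x/R$; without it the diagonal terms are too large for the full range.

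The genuine gap, however, is that you do not identify \emph{why} the exponent moves past $3/5$. After Maynard's second Cauchy--Schwarz, one arrives at a sum like \cref{eq:sketch-final-sum}, where the inner exponential phases $\e(j\bar{\ell_1}/t)$ still depend on $\ell_1$. With that dependence in place one is forced to use the pointwise Deshouillers--Iwaniec bound (their Theorem~9), whose $\theta$-loss is $x^{\theta_{\max}/2}$, and with Kim--Sarnak this lands just short of $3/5$ --- the paper makes this explicit in \cref{eq:sketch-sigma}. The new idea is that the parameter $t$ is tiny ($t \sim x^\delta$), so one can afford to fix $t$ and the residues of $\ell_1, \ell_2 \pmod{t}$ at an $x^{O(\delta)} = x^{o(1)}$ cost; the inner phase then becomes a single $\e(j\omega)$ independent of $\ell_1, \ell_2$, which unlocks the level-averaged Kloosterman bound \cref{thm:di-type-bound}. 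This replaces the $\theta$-loss $x^{\theta_{\max}/2}$ by $(NL/L^2)^{\theta_{\max}} \approx x^{(1/5 - 3\sigma)\theta_{\max}}$, and it is precisely this replacement that drives the exponent from $3/5$ to $\frac{5-4\theta_{\max}}{8-6\theta_{\max}}$, i.e.\ $66/107$ under Kim--Sarnak. Saying ``a sequence of reciprocity and shift manipulations'' and ``the strongest available form of Deshouillers--Iwaniec'' is not enough: Maynard's original rearrangements with the pointwise bound provably stop below $3/5$ unconditionally, so without this coefficient-independence step and the averaged estimate, your plan does not produce the claimed exponent.
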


A similar result with an exponent of $1/2 - \eps$ and uniformity in $a$ (as in \cref{eq:bv}) is due to Granville \cite[Theorem 2]{granville1993integers}; see also \cite{wolke1973mittlereI,wolke1973mittlereII,fouvry1991entiers,granville1993integers2}.
Virtually all results of this type that go beyond the $x^{1/2}$-barrier rely on equidistribution estimates for convolutions of sequences, but unless a long smooth sequence is involved, the exponents have been limited to $3/5$ or less. Bombieri--Friedlander--Iwaniec proved a triple convolution estimate handling moduli up to $x^{3/5}$ for sequences of convenient lengths \cite[Theorem 4]{bombieri1986primes}, and Fouvry--Tenenbaum used this result (along with the flexible factorization properties of smooth numbers) to prove an analogue of \cref{thm:distribution-simplified} for $q \le x^{3/5-\eps}$, and with a right-hand side of $x/(\log x)^A$ \cite[Th\'eor\`eme 2]{fouvry1996repartition}. Motivated by an application to the Titchmarsh divisor problem for smooth numbers, Drappeau improved the bound to $\Psi(x, y)/(\log x)^A$ in the same range $q \le x^{3/5-\eps}$ \cite[Th\'eor\`eme 1]{drappeau2015theoremes}. Unfortunately, the BFI estimates and subsequent arguments seem limited to this range of moduli.

Maynard recently introduced a different arrangement of exponential sums \cite[Chapter 18]{maynard2025primes}, which would in principle allow for a triple convolution estimate with moduli up to $x^{5/8}$, if the Selberg eigenvalue conjecture for Maass forms \cite{selberg1965estimation,sarnak1995selberg,iwaniec1985character,iwaniec1985density,iwaniec1990small,luo1995selberg} held true; but his unconditional estimates were still limited below $x^{3/5}$ \cite[Proposition 8.3]{maynard2025primes}. We introduce a further variation of Maynard's argument which eliminates certain coefficient dependencies, allowing one to use more efficient estimates for sums of Kloosterman sums in some ranges. More precisely, we rely on an optimized estimate of Deshouillers--Iwaniec type (see \cref{thm:di-type-bound}), which averages over exceptional Maass forms (and their levels) more carefully, ultimately allowing us to go beyond $3/5 = 0.6$ unconditionally. Our exponent of $66/107 \approx 0.617$ uses the best progress towards Selberg's conjecture, due to Kim--Sarnak \cite[Appendix 2]{kim2003functoriality} (based on the automorphy of symmetric fourth-power $L$-functions).

\begin{notation}[Exceptional eigenvalues] \label{not:exceptional}
For $q \in \Z_+$, define $\theta_q := \sup_\lambda \sqrt{\max(0,1 - 4\lambda)}$, where $\lambda$ runs over all eigenvalues of the hyperbolic Laplacian for the Hecke congruence subgroup $\Gamma_0(q)$ (such $\lambda$ is called exceptional iff $\lambda < 1/4$). Also, let $\theta_{\max} := \sup_{q \ge 1} \theta_q$. 
\end{notation}

\begin{conjecture}[Selberg \cite{selberg1965estimation}] \label{conj:selberg}
One has $\theta_{\max} = 0$, i.e., there are no exceptional eigenvalues.
\end{conjecture}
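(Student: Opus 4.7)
The final statement is Selberg's eigenvalue conjecture, a central open problem in the analytic theory of automorphic forms for which no full proof is known; I will therefore outline the standard framework in which all partial progress has been made, and flag the fundamental obstruction which makes the full conjecture inaccessible by current methods.

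The first step I would take is to recast the problem representation-theoretically. Via the classical Maass correspondence, an exceptional eigenvalue $\lambda = 1/4 - t^2$ with $t \in (0, 1/2]$ of the Laplacian on $\Gamma_0(q) \backslash \H$ corresponds to a cuspidal automorphic representation $\pi$ of $\GL_2/\Q$ of conductor dividing $q$ whose archimedean component is a non-tempered principal series with parameters $\{t, -t\}$. Selberg's conjecture $\theta_{\max} = 0$ is then exactly the Ramanujan conjecture at the archimedean place, uniformly across the level $q$, placing it as a special case of the general Ramanujan--Petersson philosophy.

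The main route I would pursue is Langlands functoriality through symmetric power $L$-functions. Suppose we knew that the symmetric power lift $\textnormal{Sym}^n \pi$ is a cuspidal automorphic representation of $\GL_{n+1}/\Q$ for every $n \ge 1$. Applying Rankin--Selberg theory to $\textnormal{Sym}^n \pi \times \overline{\textnormal{Sym}^n \pi}$, together with positivity of the archimedean local factor and holomorphy of the completed $L$-function on $\Re s \ge 1$, would yield a bound of the shape $|t| \le \tfrac{1}{2} - \tfrac{1}{n^2 + 1}$; letting $n \to \infty$ then forces $t = 0$. Using only the known automorphy cases $\textnormal{Sym}^2$ (Gelbart--Jacquet), $\textnormal{Sym}^3$ (Kim--Shahidi), and $\textnormal{Sym}^4$ (Kim) and optimising the argument, Kim and Sarnak obtain the bound $\theta_{\max} \le 7/32$, which is precisely the unconditional input the present paper relies on.

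The hard part, and where my plan stalls, is establishing automorphy of $\textnormal{Sym}^n \pi$ for all $n$. In the holomorphic setting this has recently been achieved by Newton--Thorne through potential automorphy and Galois deformation technology, but generic Maass forms with transcendental spectral parameter are not (known to be) associated to Galois representations of any kind, so the $p$-adic modularity-lifting toolkit has no direct analogue. Any serious attack would need either a fundamentally new automorphy-lifting theory for non-algebraic cuspidal representations on $\GL_2$, or a purely spectral approach (such as a strong density estimate for exceptional eigenvalues coupled with a bootstrap), and neither currently appears within reach. Accordingly, I expect the best honest outcome of this plan to be a reproof of the Kim--Sarnak bound $\theta_{\max} \le 7/32$, rather than the conjecture itself.
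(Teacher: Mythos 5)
The statement you were asked about is, in the paper, exactly what its label says: a \emph{conjecture}. The paper does not prove it and makes no claim to, so there is no ``paper's own proof'' to compare against. You correctly recognize this, correctly recast it as the archimedean Ramanujan conjecture for $\GL_2/\Q$ uniformly in the level, and correctly identify the functoriality-via-symmetric-powers route and the Kim--Sarnak bound $\theta_{\max} \le 7/32$ (the paper's \cref{thm:kim-sarnak}) as the best unconditional input. This matches the paper's usage: \cref{conj:selberg} is invoked only as a conditional hypothesis to yield the exponent $5/8$, while the unconditional exponent $66/107$ comes from Kim--Sarnak.

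One small technical slip worth flagging: the intermediate bound you quote, $|t| \le \tfrac{1}{2} - \tfrac{1}{n^2+1}$, is written in a form that \emph{increases} toward the trivial bound $1/2$ as $n \to \infty$, which would not force $t = 0$ in the limit as you claim; the bound obtained from the Rankin--Selberg convolution $\textnormal{Sym}^n\pi \times \overline{\textnormal{Sym}^n\pi}$ should be of the shape $|t| \ll 1/n$ (or, in the paper's normalization, $\theta \ll 1/n$), which does tend to $0$. The Kim--Sarnak $7/64$ (equivalently $7/32$ in this paper's normalization) comes from $\textnormal{Sym}^4$ plus further refinements rather than from the naive Rankin--Selberg bound. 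This does not affect your overall assessment that full Selberg requires automorphy of $\textnormal{Sym}^n$ for all $n$, which remains out of reach for Maass forms.
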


\begin{knowntheorem}[Kim--Sarnak \cite{kim2003functoriality}] \label{thm:kim-sarnak}
One has $\theta_{\max} \le 7/32$.
\end{knowntheorem}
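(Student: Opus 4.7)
The plan is to follow the Langlands functoriality route of Kim and Sarnak. First, I would pass from geometry to representations: each exceptional eigenvalue $\lambda < 1/4$ on $\Gamma_0(q) \backslash \H$ is attached to a Maass cusp form which, after newforming, lifts to a cuspidal automorphic representation $\pi$ of $\GL_2$ over $\Q$. Writing $\lambda = 1/4 - \sigma_\infty^2$ with $0 < \sigma_\infty \le 1/2$, the archimedean Langlands parameter of $\pi$ is $(\sigma_\infty, -\sigma_\infty)$, so that $\theta_q = 2|\sigma_\infty|$, and the desired bound $\theta_{\max} \le 7/32$ is equivalent to $|\sigma_\infty| \le 7/64$ for every such $\pi$. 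In fact I would aim for the same bound at every place $v$ of $\Q$ (finite or archimedean), since the Rankin--Selberg machinery naturally delivers the non-archimedean analogue at no extra cost.

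The central functoriality input is Kim's theorem that the symmetric fourth power $\mathrm{Sym}^4 \pi$ is an automorphic representation of $\GL_5$ over $\Q$, together with the earlier lifts of Gelbart--Jacquet ($\mathrm{Sym}^2$ on $\GL_3$) and Kim--Shahidi ($\mathrm{Sym}^3$ on $\GL_4$). At an unramified place $v$ where $\pi$ has Satake parameters $(\alpha_v, \alpha_v^{-1})$, the $k$-th symmetric power has Satake parameters $(\alpha_v^{k}, \alpha_v^{k-2}, \ldots, \alpha_v^{-k})$, and the automorphy gives full analytic control of the corresponding $\GL_{k+1}$ $L$-function.

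With these in hand, I would invoke the Luo--Rudnick--Sarnak positivity method: for each admissible pair $i, j \in \{0,1,2,3,4\}$, the Rankin--Selberg $L$-function $L(s, \mathrm{Sym}^i \pi \times \widetilde{\mathrm{Sym}^j \pi})$ is an Euler product with non-negative Dirichlet coefficients and known (at most polar) behavior at $s = 1$. A Landau-type argument extracts the contribution of the largest local Satake parameter $|\alpha_v|^{i+j}$, and bounding the number of poles against the pole of $\zeta(s)$ yields an inequality of the form $|\alpha_v|^{i+j} \le C_{i,j}$; optimizing over $(i,j)$, with the binding constraints coming from pairs involving $\mathrm{Sym}^4$, yields exactly $|\sigma_v| \le 7/64$. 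The precise arithmetic is worked out in the appendix to \cite{kim2003functoriality}.

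\textbf{Main obstacle.} The heavy lifting is Kim's theorem on $\mathrm{Sym}^4$: its proof requires a substantial portion of the Langlands--Shahidi method (notably $L$-functions arising from maximal parabolics in exceptional groups of type $F_4$ and $E_8$, and the analytic control of their constant terms at the edge of the critical strip), combined with the Cogdell--Piatetski-Shapiro converse theorem to recognize the resulting Euler product as automorphic on $\GL_5$. This is not something I would attempt to reprove in the present paper; I would cite it as a black box together with the $\mathrm{Sym}^2$ and $\mathrm{Sym}^3$ lifts, and explicitly record only the short Rankin--Selberg positivity computation that converts them into the numerical exponent $7/32$. Any future progress towards Selberg's conjecture beyond $7/32$ --- via $\mathrm{Sym}^5, \mathrm{Sym}^6$ functoriality or otherwise --- would feed automatically into the exponent $66/107$ appearing in \cref{thm:distribution-simplified}.
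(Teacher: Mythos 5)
The paper treats this as a known result and simply cites Kim--Sarnak without proof; your proposal does the same, deferring the $\mathrm{Sym}^4$ functoriality and the $\GL_5$ converse-theorem input to \cite{kim2003functoriality} as black boxes. Your summary of the Luo--Rudnick--Sarnak positivity argument, and your translation $\lambda = 1/4 - \sigma_\infty^2$ giving $\theta_q = 2\sigma_\infty$ so that $\sigma_\infty \le 7/64$ matches the paper's normalization $\theta_{\max} \le 7/32$ (cf.\ the Remark following the theorem), are both accurate.
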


\begin{remark}
We warn the reader of another common normalization for the $\theta$-parameters, which differs by a factor of $2$ (resulting in a bound of $7/64$ in \cref{thm:kim-sarnak}); our normalization follows \cite{deshouillers1982kloosterman, maynard2025primes}. We give more details on the role of exceptional eigenvalues in our work in \cref{sec:deshouillers-iwaniec}.
\end{remark}
We now state a more general version of our main result from \cref{thm:distribution-simplified}, which makes the dependency on $\theta_{\max}$ explicit, gives a refined bound in the right-hand side (following \cite{drappeau2015theoremes}), and allows for some small uniformity in the residue parameter $a$.

\begin{theorem}[Conditional exponent of distribution] \label{thm:distribution}
For any $\eps > 0$, there exist $C, \delta > 0$ such that the following holds. 
Let $x > 2$, $(\log x)^C \le y \le x^{1/C}$, and denote $u := (\log x)/(\log y)$, $H(u) := \exp \left(u \log^{-2} (u+1)\right)$. Then with an exponent of
\begin{equation} \label{eq:exponent-theta}
    \alpha := \frac{5-4\theta_{\max}}{8-6\theta_{\max}} - \eps,
\end{equation}
one has
\begin{equation} \label{eq:exponent}
    \sum_{\substack{q \le x^{\alpha} \\ (q, a_1 a_2) = 1}} 
    \left\vert \Psi(x, y; a_1 \bar{a_2}, q) - \frac{\Psi_q(x, y)}{\varphi(q)}
    \right\vert 
    \ll_{\eps, A} 
    \Psi(x, y) \left(H(u)^{-\delta} (\log x)^{-A} + y^{-\delta}\right),
\end{equation}
for all $a_1, a_2 \in \Z$ with $1 \le |a_1|, |a_2| \le x^\delta$, and all $A \ge 0$. The implicit constant is effective if $A < 1$.
\end{theorem}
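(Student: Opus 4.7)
The plan is to follow the dispersion method framework of Drappeau \cite{drappeau2015theoremes}, replacing his triple convolution estimate (based on the BFI bound of \cite{bombieri1986primes}) with a stronger one derived from Maynard's rearrangement of exponential sums \cite{maynard2020primes} together with the optimized Deshouillers--Iwaniec estimate \cref{thm:di-type-bound}. The target exponent \cref{eq:exponent-theta} should emerge from optimizing parameters in this new trilinear estimate, with $\theta_{\max}$ entering through the exceptional spectrum contribution in the Kuznetsov formula.

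First I would reduce \cref{eq:exponent} to Type I/II/III estimates. Following Fouvry--Tenenbaum \cite{fouvry1982repartition} and Drappeau, one exploits the flexibility of smooth factorizations: by choosing a distinguished factor of $n \in S(x,y)$ in a convenient dyadic range (via a Buchstab-type identity isolating certain prime factors), the characteristic function of $S(x,y)$ decomposes into bilinear and trilinear convolutions $\alpha \ast \beta$ or $\alpha \ast \beta \ast \gamma$ supported on dyadic ranges $M$, $N$, and $L$. The Type I contributions, where one factor is much shorter than $x^{1/2}$, are handled by a smoothed Bombieri--Vinogradov estimate; the nontrivial cases are the Type II bilinear sums and the Type III trilinear sums whose factors straddle $x^{1/2}$.

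To bound these, one applies the dispersion method after Cauchy--Schwarz. The resulting quantity takes the schematic shape
\[
\sum_{q \sim Q} \sum_{m, n, n'} \alpha_m \beta_n \bar{\beta}_{n'} \one_{q \mid m(n - n')} \; - \; \text{main term},
\]
or a trilinear analogue with an extra inner variable. After detecting the congruence via additive characters and applying Poisson summation in suitable variables, the inner sums reduce to averages of Kloosterman sums $S(\cdot,\cdot;c)$ over varying moduli. Maynard's rearrangement separates the variables in these congruences so that a modified averaged sum can be handled via the Kuznetsov trace formula on $\Gamma_0(q)$; the novelty here is to eliminate certain obstructive coefficient dependencies so that the refined DI-type bound \cref{thm:di-type-bound}, which averages over exceptional Maass forms \emph{and} their levels more carefully, becomes applicable in the relevant range.

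The principal obstacle is carrying out Maynard's rearrangement in a form compatible with the sharper DI estimate, without incurring $x^\eps$ losses in the completion of sums or in the shape of the smooth cutoffs. A secondary technical difficulty is balancing the dyadic parameters $M$, $N$, $L$ in the trilinear decomposition: the DI-type estimate is only nontrivial in a specific parameter window, and the flexibility afforded by smoothness is precisely what allows one to place $L$ in that window. Verifying that every Type I, II and III contribution satisfies the power-saving bound in \cref{eq:exponent} uniformly in $|a_1|, |a_2| \le x^\delta$, and that the numerical optimization yields exactly $\alpha = (5-4\theta_{\max})/(8-6\theta_{\max}) - \eps$, then completes the proof.
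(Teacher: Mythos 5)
The high-level direction is right — dispersion, Maynard's rearrangement of the exponential sums, and the optimized Deshouillers--Iwaniec estimate \cref{thm:di-type-bound} are indeed the engine of the proof — but two significant steps are either missing or misdescribed.

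First, the Type~I/II/III decomposition and Buchstab identity you propose are the framework for \emph{primes}, where the von~Mangoldt function forces a combinatorial decomposition producing sums of uncontrollable shapes. For smooth numbers this scaffolding is unnecessary: every $n \in S(x,y)$ with $n \asymp x$ factors as $n = m n' \ell$ with each factor (up to $y$-flexibility, which costs only $(\log y)^{O(1)}$ dyadic splittings) in \emph{any} prescribed dyadic window. So one simply writes the counting function as a single triple convolution with parameters $M \approx N \approx x/R$, $L \approx R^2/x$ as in \cref{eq:optimal-parameters}, and applies \cref{thm:triple-conv-estimate} once. There is no Type~II bilinear case to handle separately, and nothing "straddles $x^{1/2}$" — the factorization is chosen so that $M, N$ are both comfortably below $x^{1/2}$ and $L$ sits in the window where the DI-type bound is strongest. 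The Buchstab-style argument would not be wrong, but it manufactures complications that the smoothness structure lets you avoid entirely.

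Second, and more seriously, your proposal does not account for the specific shape of the error term $\Psi(x,y)\left(H(u)^{-\delta}(\log x)^{-A} + y^{-\delta}\right)$, which you describe as a "power-saving bound" — it is not. The triple convolution estimate alone would only give a bound of order $x/(\log x)^A$, which is far weaker than $\Psi(x,y)(\log x)^{-A}$ when $u$ is large (as $\Psi(x,y)$ can be as small as $x^{1-o(1)}$ but much smaller than $x/(\log x)^{O(1)}$ in parts of the range). To obtain the refined right-hand side one must follow Drappeau's modification: replace the approximation $\one_{(k,r)=1}/\varphi(r)$ by $\omega_D(k;r)/\varphi(r)$ keeping all Dirichlet characters of conductor $\le D = x^\eps$, prove the triple convolution estimate with a genuine power-saving against the remaining error $\mE_D$ (this is \cref{thm:triple-conv-estimate}), and then bound the contribution of the retained small-conductor characters via Harper's theorem (\cref{lem:harper}). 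Without this separation and \cref{lem:harper}, the claimed bound \cref{eq:exponent} cannot be reached, especially its $y$-uniformity down to $y = (\log x)^C$. Your sketch, as written, leaves this essential piece out.
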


\begin{remark}
In \cref{eq:exponent}, $\bar{a_2}$ denotes a multiplicative inverse of $a_2$ modulo $q$; so the residue $a_1\bar{a_2}$ corresponds to congruences of the form $a_2 n \equiv a_1 \pmod{q}$. The right-hand side of \cref{eq:exponent} is the same as in Drappeau's result \cite[Th\'eor\`eme 1]{drappeau2015theoremes}, and ultimately comes from a result of Harper (see \cref{lem:harper}). 
\end{remark}
In particular, \cref{conj:selberg} would imply an exponent of distribution of $5/8 - o(1)$, while \cref{thm:kim-sarnak} leads to the unconditional exponent of $66/107 - o(1)$ from \cref{thm:distribution-simplified}. 
As in previous approaches, our main technical result leading to \cref{thm:distribution} is a triple convolution estimate, given in \cref{thm:triple-conv-estimate}; this improves on \cite[Th\'eor\`eme 4]{bombieri1986primes}, \cite[Th\'eor\`eme 3]{drappeau2015theoremes}, \cite[Lemma 2.3]{drappeau2017smooth}, and \cite[Proposition 8.3]{maynard2025primes}. We expect all such approaches to face a significant barrier at the exponent $2/3 = 0.\bar{6}$ (see the remark after \cref{eq:optimal-parameters}), so we may view the exponents of $66/107 \approx 0.617$ and $5/8 = 0.625$ as progress towards this limit.

In fact, \cref{thm:triple-conv-estimate} is already in a suitable form to improve the analogous results of Drappeau, Granville and Shao about smooth-supported multiplicative functions \cite{drappeau2017smooth}. More precisely, using \cref{thm:triple-conv-estimate} instead of \cite[Lemma 2.3]{drappeau2017smooth}, one can improve the exponent of $3/5$ in \cite[Theorem 1.2]{drappeau2017smooth} to the same value as in \cref{eq:exponent-theta}. We state a particular case of this result below, borrowing the notation
\[
    \Delta(f, x; q, a) := \sum_{\substack{n \le x \\ n \equiv a \pmod{q}}} f(n) 
    -
    \frac{1}{\varphi(q)} \sum_{\substack{n \le x \\ (n, q) = 1}} f(n)
\]
from \cite{drappeau2017smooth}; we also say that an arithmetic function $f$ satisifes the \emph{Siegel--Walfisz criterion} iff
\[
    \forall\ A > 0,\ (a, q) = 1,\ x \ge 2 : 
    \qquad 
    \Delta(f, x; q, a) \ll_A \frac{1}{(\log x)^A} \sum_{n \le x} |f(n)|.
\]

\begin{theorem}[Smooth-supported multiplicative functions in APs] \label{thm:distribution-functions}
For any $\eps, A > 0$, there exists $\delta > 0$ such that the following holds. 
Let $x > 2$, $x^\delta \ge y \ge \exp(\sqrt{\log x} \log \log x)$, and $f$ be a $1$-bounded completely multiplicative function supported on $y$-smooth integers, satisfying the Siegel--Walfisz criterion. Then for $\alpha := \frac{5-4\theta_{\max}}{8-6\theta_{\max}} - \eps$, and all $a_1, a_2 \in \Z$ with $1 \le |a_1|, |a_2| \le x^{\delta}$,
\[
    \sum_{\substack{q \le x^{\alpha} \\ (q, a_1a_2) = 1}} |\Delta(f, x; q, a_1 \bar{a_2})|
    \ll_{\eps,A} 
    \frac{\Psi(x, y)}{(\log x)^A}.
\]
\end{theorem}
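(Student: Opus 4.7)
The plan is to follow the strategy of Drappeau--Granville--Shao \cite{drappeau2017smooth} essentially verbatim, substituting our improved estimate \cref{thm:triple-conv-estimate} for their triple convolution Lemma 2.3. After a dyadic decomposition of the outer sum, the task reduces to bounding, for each dyadic $Q \le x^\alpha$, the contribution
\[
    \sum_{\substack{Q < q \le 2Q \\ (q, a_1 a_2) = 1}} |\Delta(f, x; q, a_1 \bar{a_2})|
    \ll_{\eps, A}
    \frac{\Psi(x, y)}{(\log x)^{A + O(1)}},
\]
with a savings sufficient to absorb the dyadic summation in $Q$.

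The second step is a combinatorial decomposition of $f$ using its complete multiplicativity and $y$-smooth support. Following \cite{drappeau2017smooth}, one expresses $f$ (up to controlled remainders) as a finite linear combination of Dirichlet convolutions $\alpha * \beta * \gamma$, with coefficients supported on dyadic ranges $L, M, N$ satisfying $LMN \asymp x$, where $\alpha, \beta, \gamma$ are divisor-bounded and at least one factor inherits a Siegel--Walfisz-type condition from $f$ through the convolution identities and Rankin-type bounds used in \cite{drappeau2017smooth}. The degenerate pieces (where one of $L, M, N$ is very short or very long) reduce to Type~I/II bilinear sums handled by the Bombieri--Vinogradov theorem combined with the Siegel--Walfisz hypothesis on $f$, both of which remain valid well beyond $Q \le x^{1/2}$ in this bilinear regime. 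The genuinely trilinear pieces are precisely the inputs for \cref{thm:triple-conv-estimate}, which delivers a nontrivial average bound exactly for $Q \le x^{\alpha}$ with $\alpha = \frac{5 - 4\theta_{\max}}{8 - 6\theta_{\max}} - \eps$.

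The main obstacle is the verification that the convolution factors produced by the smooth-number decomposition of $f$ genuinely lie in the admissible ``box'' of dyadic ranges $(L, M, N)$ covered by \cref{thm:triple-conv-estimate}, and that they satisfy its divisor-bounded and Siegel--Walfisz hypotheses uniformly. However, this bookkeeping has already been performed in \cite{drappeau2017smooth} for the exponent $3/5$; since \cref{thm:triple-conv-estimate} covers a strictly larger admissible box and imposes analogous hypotheses, their case analysis transfers with only routine adjustments of the range conditions. Combining all contributions and using $\Psi(x, y) \gg_\eps x (\log x)^{-O_\eps(1)}$ in the range $y \ge \exp(\sqrt{\log x} \log \log x)$ (which is where the Hildebrand--Tenenbaum saddle-point estimate makes $\Psi(x, y)$ of almost-polynomial size and allows the $(\log x)^{-A}$-savings to absorb all error terms) yields the claimed bound with the improved exponent $\alpha$.
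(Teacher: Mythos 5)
Your high-level plan (dyadic decomposition, smooth-number factorization into a triple convolution, substitute \cref{thm:triple-conv-estimate} for the Deshouillers--Iwaniec-range triple convolution estimate of Drappeau--Granville--Shao) matches the paper's intended route, which is exactly ``follow \cite{drappeau2017smooth} with the improved triple convolution estimate and the parameters from \cref{eq:parameters-2}.'' However, your description of how the Siegel--Walfisz hypothesis on $f$ enters, and of what the genuine extra difficulty is, does not match the actual argument.

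Recall that \cref{thm:triple-conv-estimate} is stated in terms of $\mE_D$, i.e.\ it already subtracts the contribution of small-conductor characters via $\omega_D$. So after applying it (with $D = x^\eps$, say), one must \emph{separately} control
\[
    \sum_{q \le x^\alpha} \frac{1}{\varphi(q)} \Bigl\vert \sum_{\substack{\chi \pmod q \\ 1 < \cond(\chi) \le x^\eps}} \sum_{n \le x} f(n)\chi(n) \Bigr\vert.
\]
For the pure smooth-number count this is exactly \cref{lem:harper} (Harper's bound). For a general $1$-bounded smooth-supported completely multiplicative $f$, Harper's lemma does not apply, and the paper explicitly flags this as ``the main additional difficulty''; the substitute is the large sieve inequality for smooth-supported sequences of \cite[Theorem 5.1]{drappeau2017smooth}, combined with the Siegel--Walfisz hypothesis on $f$. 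Your proposal never mentions this small-conductor character contribution or Theorem~5.1 of \cite{drappeau2017smooth}; instead you attribute the role of the Siegel--Walfisz hypothesis to handling ``degenerate Type~I/II pieces'' via Bombieri--Vinogradov ``valid well beyond $Q \le x^{1/2}$ in this bilinear regime,'' which is not how the argument is organized (and the parenthetical claim about BV extending beyond $x^{1/2}$ is not correct as stated). In the smooth setting one does not perform a Heath-Brown-style combinatorial decomposition with degenerate Type~I/II tails; one uses the flexible factorization of smooth numbers directly into three factors of the prescribed sizes in \cref{eq:parameters-2}, and the only residual ranges are $n \ll x^{1-\eps}$ (trivial) and $R \ll x^{1/2-\eps/10}$ (classical). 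So the genuine missing ingredient in your writeup is the treatment of the small-conductor characters via the DGS large sieve inequality, which is precisely the step where the Siegel--Walfisz assumption on $f$ is consumed.
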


\begin{remark}
The improvement from $3/5-\eps$ to the exponent in \cref{eq:exponent-theta} follows through in most applications of Drappeau's result \cite[Th\'eor\`eme 1]{drappeau2015theoremes}, such as \cite[Corollaire 1]{drappeau2015theoremes}. Following \cite[Sections 2 and 4]{de2020niveau}, our triple convolution estimate also implies a version of \cref{thm:distribution} restricted to smooth moduli, which can be used to deduce refined upper bounds for the number of smooth values assumed by a factorable quadratic polynomial. For instance, one should obtain
\[
    \#\{n \sim x : n(n+1) \text{ is $y$-smooth} \} \ll_\eps 
    x \varrho(u)^{1+66/107-\eps},
\]
for $(\log x)^{O_\eps(1)} \le y \le x$,
where $u = (\log x)/(\log y)$ and $\varrho(u)$ is the Dickman function (satisfying $\Psi(x, y) = x \varrho(u) e^{O_\eps(u)}$ in this range \cite{de2020niveau,hildebrand1986number}).
\end{remark}

\section{Overview of key ideas} \label{sec:overview}

Let us give a very rough sketch of our argument, for a simpler case of \cref{thm:distribution-simplified}. Consider the residue $a = 1$, the smoothness parameter $y = x^{1/\sqrt{\log \log x}}$, and a sum over moduli just above the $3/5$ threshold, say
\[
    r \le R := x^{3/5 + \sigma},
\]
for some small $\sigma \gg 1$ (we switch the variable $q$ to $r$, following \cite{drappeau2015theoremes,drappeau2017smooth}). 
Using the factorization properties of smooth numbers, it suffices to prove a triple convolution estimate roughly of the form
\begin{equation} \label{eq:sketch-triple-conv}
    \sum_{r \sim R} \rho_r \sum_{m \sim M} \alpha_m
    \sum_{n \sim N} \beta_n
    \sum_{\ell \sim L} \gamma_\ell \left(1_{mn\ell \equiv 1 \pmod{r}} - \frac{\one_{(mn\ell, r)=1}}{\varphi(r)} \right)
    \ll_A
    \frac{x}{(\log x)^A},
\end{equation}
where $(\rho_r)$, $(\alpha_m)$, $(\beta_n)$, $(\gamma_\ell)$ are arbitrary $1$-bounded complex sequences, but we are free to choose the parameters $M, N, L \gg 1$ subject to $MNL \asymp x$. We pick
\begin{equation} \label{eq:sketch-ranges}
    M := \frac{x^{1-\delta}}{R}, 
    \qquad\qquad
    N := \frac{x^{1-2\delta}}{R},
    \qquad\qquad 
    L := \frac{R^2}{x^{1-3\delta}},
\end{equation}
for some small $\delta = o(1)$; thus $M, N \approx x^{2/5-\sigma}$ and $L \approx x^{1/5+2\sigma}$. Note additionally that $NL = x^{\delta} R$.

\subsection{First steps and limitations of previous approaches} \label{subsec:first-steps}
Following previous works \cite{bombieri1986primes,drappeau2015theoremes,maynard2025primes} based on Linnik's \emph{dispersion method} \cite{linnik1963dispersion}, we apply Cauchy--Schwarz in the $r, m$ variables, expand the square, and Fourier-complete the resulting sums in $m$ to sums over $h$. Ignoring GCD constraints, the key resulting exponential sum is a smoothed variant of
\begin{equation} \label{eq:sketch-main-exponential}
    \sum_{r \sim R}\,
    \sum_{k \sim NL} u_k
    \sum_{n \sim N} \beta_n
    \sum_{\substack{\ell \sim L \\ n\ell \equiv k \pmod{r}}} \gamma_\ell
    \sum_{h \sim R/M} \e\left(h \frac{\bar{k}}{r} \right),
\end{equation}
where $(u_k)$ is the convolution of the original sequences $(\beta_n)$, $(\gamma_\ell)$. We then flip moduli in the exponential via Bezout's identity, and substitute $t := (k - n\ell)/r$; this leads to the sum
\begin{equation} \label{eq:sketch-bfi-style}
    \sum_{t \sim x^\delta}\,
    \sum_{k \sim NL} u_k
    \sum_{n \sim N} \beta_n
    \sum_{\substack{\ell \sim L \\ n\ell \equiv k \pmod{t}}} \gamma_\ell
    \sum_{h \sim R/M} \e\left(h t \frac{\bar{n\ell}}{k} \right).
\end{equation}
Following Maynard \cite{maynard2025primes}, we apply Cauchy--Schwarz in the $t, n, k$ variables (keeping the congruence modulo $t$ inside), and expand the square to reach the sum 
\begin{equation} \label{eq:sketch-bfi-style-2}
    \sum_{t \sim x^\delta}
    \sum_{\substack{\ell_1, \ell_2 \sim L \\ \ell_1 \equiv \ell_2 \pmod{t}}}
    \gamma_{\ell_1} \bar{\gamma_{\ell_2}}
    \sum_{h_1, h_2 \sim R/M}
    \sum_{k \sim NL}
    \sum_{\substack{n \sim N \\ n \equiv k\bar{\ell_1} \pmod{t}}}
    \e\left(t(h_1 \ell_2 - h_2 \ell_1) \frac{\bar{n\ell_1\ell_2}}{k} \right),
\end{equation}
which we ultimately need to bound by $\ll_A RNL^2 (\log x)^{-A}$; note that the trivial bound is larger by about $(R/M)^2$, due to the sum over $h_1, h_2$ introduced by Poisson summation. 
We bound the contribution of the diagonal terms (with $h_1\ell_2 = h_2\ell_1$) by
\begin{equation} \label{eq:sketch-diagonal-terms}
    \sum_{t \sim x^\delta}\,
    \sum_{\ell_1 \sim L}\,
    \sum_{h_2 \sim R/M} x^{o(1)}
    \sum_{k \sim NL} 
    \sum_{\substack{n \sim N \\ n \equiv k\bar{\ell_1} \pmod{t}}} 1
    \ll 
    x^{o(1)} RNL^2 \frac{N}{M},
\end{equation}
which is acceptable since $N/M = x^{-\delta}$.
We then introduce Kloosterman sums $S(i, j; k)$ by completing the sum in $n$ to a sum over $j$, and find acceptable contributions from the zeroth Fourier coefficient ($j = 0$), as well as from the terms with $\ell_1 = \ell_2$; here it suffices to use the Ramanujan bound, respectively an estimate of Deshouillers--Iwaniec \cite[Theorem 9]{deshouillers1982kloosterman}. It ultimately remains to bound a variant of
\begin{equation} \label{eq:sketch-final-sum}
    \sum_{t \sim x^\delta}
    \sum_{\substack{\ell_1, \ell_2 \sim L \\ \ell_1 \equiv \ell_2 \pmod{t} \\ \ell_1 \neq \ell_2}}
    \sum_{\substack{h_1, h_2 \sim R/M \\ h_1\ell_2 \neq h_2\ell_1}}
    \left\vert
    \sum_{j \sim x^\delta L} \e\left(\frac{j\bar{\ell_1}}{t}\right)  
    \sum_{k \sim NL}
    S\left((h_1\ell_2 - h_2\ell_1)\bar{\ell_1\ell_2}, j; k\right)\right\vert,
\end{equation}
by $\ll_A N^2 L^4 (\log x)^{-A}$. Inserting $1$-bounded coefficients $\xi_{h_1, h_2}$ (also depending on $t, \ell_1, \ell_2$), and letting $a_d := \sum_{h_1 \ell_2 - h_2 \ell_1 = d} \xi_{h_1, h_2}$, the sum over $h_1, h_2$ in \cref{eq:sketch-final-sum} roughly reduces to
\begin{equation} \label{eq:sketch-kloosterman}
    \sum_{d \sim RL/M} a_d \sum_{j \sim x^\delta L} \e\left(\frac{j\bar{\ell_1}}{t}\right) 
    \sum_{k \sim NL} S\left(d \bar{\ell_1 \ell_2}, j; k\right).
\end{equation}
Such sums can be bounded using the spectral theory of automorphic forms, specifically through the aforementioned work of Deshouillers--Iwaniec \cite{deshouillers1982kloosterman} (based on the Kuznetsov trace formula and the Weil bound); the relevant \emph{level} of the congruence group in \cref{not:exceptional} is $Q = \ell_1\ell_2$. Indeed, Maynard \cite{maynard2025primes} uses \cite[Theorem 9]{deshouillers1982kloosterman} to bound (a smoothed variant of) the sum in \cref{eq:sketch-kloosterman} by
\[
    x^{\theta_Q/2 + o(1)} NL^{5/2} \left(\sum_{d \sim RL/M} |a_d|^2\right)^{1/2},
\]
and consequently the sum in \cref{eq:sketch-final-sum} by
\begin{equation} \label{eq:sketch-final-bound}
    x^{\theta_{\max}/2 + o(1)} NL^{5/2} \frac{R^{3/2}L^{3/2}}{M^{3/2}} 
    \approx 
    x^{(\theta_{\max}+3)/2 + 10\sigma}.
\end{equation}
Unfortunately, this falls short of the desired bound of $N^2 L^4 (\log x)^{-A} \approx x^{8/5 + 6\sigma}$, unless
\begin{equation} \label{eq:sketch-sigma}
    4\sigma < \frac{1}{10} - \frac{\theta_{\max}}{2}.
\end{equation}
This is (barely) impossible with the currently best-known bound of $\theta_{\max}/2 \le 7/64 \approx 0.109$.

\subsection{Improved exponential sum manipulations for specific ranges}

Starting from the work of Deshouillers--Iwaniec \cite{deshouillers1982kloosterman}, better bounds (in the $\theta$-aspect) for sums like \cref{eq:sketch-kloosterman} have been available when one additionally averages over the level $\ell_1\ell_2$, and at least one of the sequences of coefficients is independent of the level. Indeed, Drappeau's triple convolution estimate \cite{drappeau2015theoremes} and prior works rely on \cite[Theorem 12]{deshouillers1982kloosterman}, which gives such a result for incomplete Kloosterman sums.

Following Maynard's argument (which is in turn based on Bombieri--Friedlander--Iwaniec's work in \cite[Section 10]{bombieri1987primes2}), we prefer to complete our Kloosterman sums and bound the contribution from the zeroth Fourier coefficient by hand, and separate into terms with $\ell_1 = \ell_2$ and $\ell_1 \neq \ell_2$, all before invoking Deshouillers--Iwaniec-style bounds. We then aim to apply an optimized bound for sums of complete Kloosterman sums with averaging over the level (given in \cref{thm:di-type-bound}), which improves \cite[Theorem 11]{deshouillers1982kloosterman} by making the dependency on the $\theta_{\max}$ parameter explicit. But for this strategy to work out, we would need:
\begin{itemize}
    \item[(1).] The range of $(\ell_1, \ell_2)$ in \cref{eq:sketch-final-sum} to be (discretely) dense inside $[L, 2L]^2$, and
    \item[(2).] Crucially, the coefficients $e\left(j \bar{\ell_1} / t \right)$ to not depend on $\ell_1, \ell_2$.
\end{itemize}
While (2) is obviously false in our case, it is only barely false for the specific ranges in \cref{eq:sketch-ranges}, due to the smallness of the parameter $t \sim x^\delta$. In particular, losing a factor of at most $x^{O(\delta)} = x^{o(1)}$ in \cref{eq:sketch-final-sum}, we may fix $t$ and the values of $\ell_1$ and $\ell_2 \pmod{t}$, turning (2) into a true statement at the expense of (1). The number of pairs $(\ell_1, \ell_2)$ now becomes $\asymp L^2/t^2$, which ends up costing us another acceptable factor of $x^{o(1)}$. Overall, it remains to bound a sum of the form
\begin{equation} \label{eq:sketch-kloosterman-2}
    \sum_{\ell_1,\ell_2 \sim L}\, \sum_{d \sim RL/M} a_{d,\ell_1,\ell_2} \sum_{j \sim x^\delta L} \e\left(j\omega\right) 
    \sum_{k \sim NL} S\left(d \bar{\ell_1 \ell_2}, j; k\right),
\end{equation}
for some fixed $\omega \in \R/\Z$ (independent of $\ell_1, \ell_2$). Using \cref{thm:di-type-bound}, we obtain a bound like \cref{eq:sketch-final-bound} where the factor depending on $\theta_{\max}$ is
\[
    \left(\frac{NL}{L^2}\right)^{\theta_{\max}} \approx x^{\left(\frac{1}{5}-3\sigma\right) \theta_{\max}},
\]
rather than $x^{\theta_{\max}/2}$. Thus instead of \cref{eq:sketch-sigma}, we now reach the desired bound provided that
\[
    \left(4 - 3\theta_{\max}\right)\sigma < \frac{1}{10} - \frac{\theta_{\max}}{5},
\]
which is possible since $(1/5) \cdot (7/32) = 0.04375 < 0.1$. In fact, this handles all values
\[
    \sigma < \frac{1-2\theta_{\max}}{40-30\theta_{\max}}
    \qquad\qquad 
    \iff 
    \qquad\qquad 
    \frac{3}{5} + \sigma < 
    \frac{5-4\theta_{\max}}{8-6\theta_{\max}},
\]
reaching the exponent of distribution in \cref{eq:exponent-theta}. Plugging in Kim--Sarnak's bound of $\theta_{\max} \le 7/32$ (\cref{thm:kim-sarnak}) yields the unconditional exponent of $66/107 \approx 0.617$ from \cref{thm:distribution-simplified}.

\begin{remark}
It is likely that optimized Deshouillers--Iwaniec-style bounds like \cref{thm:di-kloosterman} could also improve Drappeau's argument \cite{drappeau2015theoremes}, leading to a triple convolution estimate with different ranges than in our \cref{thm:triple-conv-estimate}. In terms of the final exponent of distribution of smooth numbers, all such methods currently seem limited below $66/107$ unconditionally (and $5/8$ conditionally).
\end{remark}

\subsection{Completing the argument} \label{subsec:completing-argument}

To increase the range of uniformity in $y$, we adapt Drappeau's version of the dispersion method \cite{drappeau2015theoremes}: we aim for a triple convolution estimate with a power-saving in \cref{thm:triple-conv-estimate}, after separating the contribution of small-conductor Dirichlet characters
\[
    \frac{1}{\varphi(r)} \sum_{\substack{\chi \pmod{r} \\ \cond(\chi) \le x^\eps}} \chi(mn\ell)
\]
from \cref{eq:sketch-triple-conv}; this can be handled via \cref{lem:harper}. As a result, the simpler two dispersion sums $\mS_2$, $\mS_3$ and their main terms involve Dirichlet characters (see \cref{prop:setup,prop:main-terms}), which ultimately bring in the classical Gauss sum bound (\cref{lem:gauss}) and the multiplicative large sieve (\cref{lem:large-sieve}). The difficulties in working with a general residue $a_1 \bar{a_2}$ for $a_1, a_2 \ll x^\delta$, and in obtaining power savings throughout the computations in \cref{subsec:first-steps}, are quite tedious but purely technical (following \cite{drappeau2015theoremes}).

We also adapt a ``deamplification'' argument of Maynard \cite{maynard2025primes}, which introduces an artificial sum over $e \sim E = x^{o(1)}$ into the dispersion sums (by averaging over the residue of $n\ell \pmod{e}$ before applying Cauchy--Schwarz); for instance, the sum in \cref{eq:sketch-main-exponential} becomes
\[
    \sum_{e \sim E}\, 
    \sum_{r \sim R}\,
    \sum_{k \sim NL} u_k
    \sum_{n \sim N} \beta_n
    \sum_{\substack{\ell \sim L \\ n\ell \equiv k \pmod{re}}} \gamma_\ell
    \sum_{h \sim R/M} \e\left(h \frac{\bar{k}}{r} \right).
\]
Keeping $e$ inside the second application of Cauchy--Schwarz, this essentially reduces the contribution of the diagonal terms in \cref{eq:sketch-diagonal-terms} by a factor of $E$, allowing us to cover wider ranges of sequence lengths in \cref{thm:triple-conv-estimate} (including the case $M = N$).
This is generally convenient, and critical when one has less control over the sizes of the sequence lengths (which is the case in applications to the primes, but not to smooth numbers). 

\cref{fig:outline} gives a visual summary of our formal argument, outlining the logical dependencies between our main lemmas, propositions, and theorems.

\begin{figure}[ht]
\centering 
\begin{tikzpicture}[
square/.style={rectangle, draw=black!60, fill=white, very thick, minimum size=5mm},
important/.style={rectangle, double, draw=black!60, fill=white, very thick, minimum size=5mm},
]
\node[square, fill=teal!11] (distribution)
    {\twoline{Smooth numbers in APs}{(\cremph{thm:distribution})}};
\node[square, fill=teal!1] (small-conductors) [xshift=-3.2cm] [below=of distribution]
    {\twoline{Contribution of small-conductor}{characters (\cremph{lem:harper})}};
\node[important, fill=teal!11] (triple-conv) [right = 1cm of small-conductors]
    {\twoline{Triple convolution estimate}{(\cremph{thm:triple-conv-estimate})}};
\node[square, fill=teal!7] (setup) [xshift=-7cm] [below=of triple-conv]
    {\twoline{Dispersion $+$ deamplification}{setup (\cremph{prop:setup})}};
\node[square, fill=teal!7] (sums) [right=0.5cm of setup]
    {\twoline{Dispersion sum estimates}{(\cremph{prop:dispersion-sums})}};
\node[square, fill=teal!7] (main-terms) [right=0.5cm of sums]
    {\twoline{Contribution of main}{terms (\cremph{prop:main-terms})}};
\node[square, fill=teal!1] (large-sieve) [below=0.5cm of main-terms]
    {\twoline{Large sieve}{(\cremph{lem:large-sieve})}};
\node[square, fill=teal!7] (bfi-expo) [below=0.5cm of large-sieve]
    {\twoline{Improved BFI-style bound}{(\cremph{prop:bfi-expo,prop:bfi-expo-2})}};
\node[square, fill=teal!1] (weil-bound) [left=0.3cm of bfi-expo]
    {\twoline{Incomplete Weil}{bound (\cremph{lem:incomplete-weil})}};
\node[square, fill=teal!1] (gauss) [left=0.3cm of weil-bound]
    {\twoline{Gauss sum bound}{(\cremph{lem:gauss})}};
\node[square, fill=teal!1] (poisson) [left=0.3cm of gauss]
    {\twoline{Truncated Poisson}{(\cremph{lem:poisson})}};
\node[square, fill=teal!1] (completion) [xshift=2cm] [below=of gauss]
    {\twoline{Completion of Kloosterman}{sums (\cremph{lem:completion})}};
\node[square, fill=teal!11] (avg-forms) [right=1cm of completion] 
    {\twoline{DI-type Kloosterman}{sum bound (\cremph{thm:di-type-bound})}};

\draw[-] (triple-conv) -- ([yshift=-0.4cm] distribution.south);
\draw[-] (small-conductors) -- ([yshift=-0.4cm] distribution.south);
\draw[-triangle 45] ([yshift=-0.4cm] distribution.south) -- (distribution.south);
\draw[-triangle 45] (large-sieve) -- (main-terms);
\draw[-] (setup) -- ([yshift=-0.4cm] triple-conv.south);
\draw[-] (sums) -- ([yshift=-0.4cm] triple-conv.south);
\draw[-] (main-terms) -- ([yshift=-0.4cm] triple-conv.south);
\draw[-triangle 45] ([yshift=-0.4cm] triple-conv.south) -- (triple-conv.south);
\draw[-] (poisson) -- ([yshift=-0.4cm] sums.south);
\draw[-] (gauss) -- node[midway,fill=white]{$\mS_3$} ([yshift=-0.4cm] sums.south);
\draw[-] (weil-bound) -- node[midway,fill=white]{$\mS_2$} ([yshift=-0.4cm] sums.south);
\draw[-] (bfi-expo) -- node[midway,fill=white]{$\mS_1$} ([yshift=-0.4cm] sums.south);
\draw[- triangle 45] ([yshift=-0.4cm] sums.south) -- (sums.south);
\draw[-] (avg-forms) -- ([xshift=-1.5cm, yshift=-0.3cm] bfi-expo.south);
\draw[-] (completion) -- ([xshift=-1.5cm, yshift=-0.3cm] bfi-expo.south); 
\draw[- triangle 45] ([xshift=-1.5cm, yshift=-0.3cm] bfi-expo.south) -- (bfi-expo);
\end{tikzpicture}
\caption{Structure of argument (\emph{arrows show logical implications}).}
\label{fig:outline}
\end{figure}
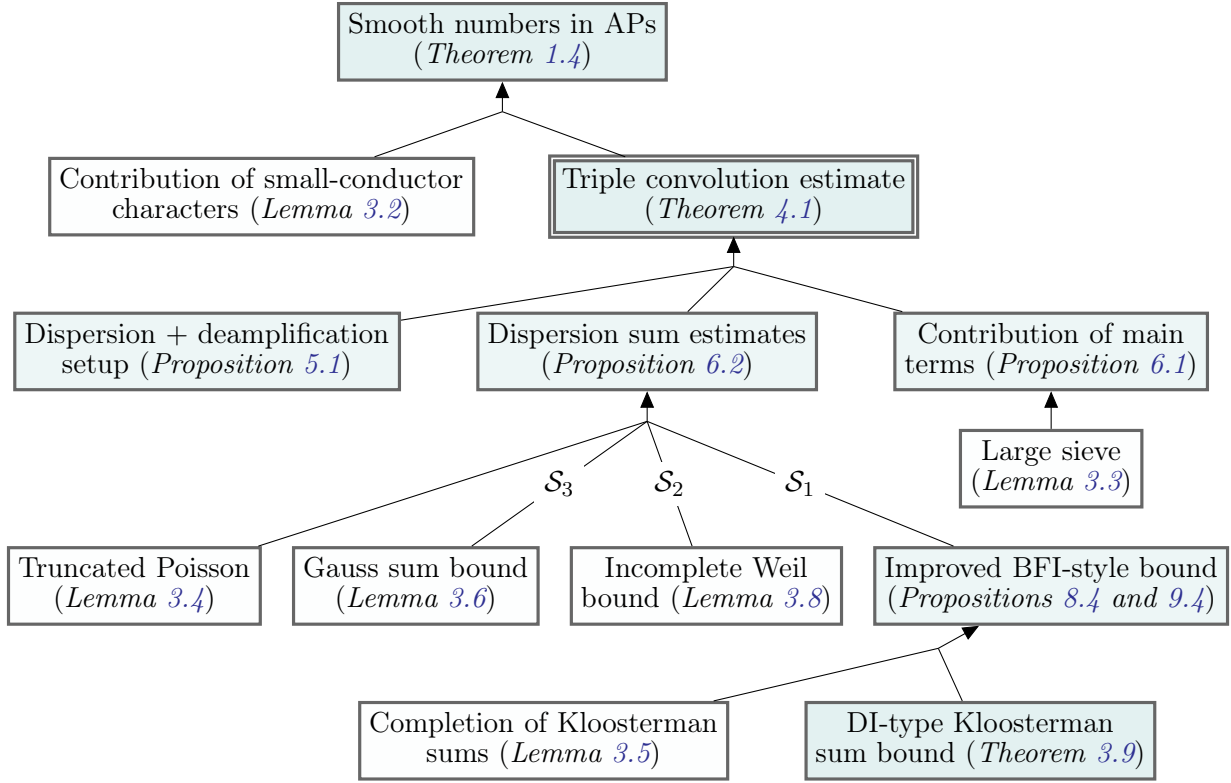

\section{Notation and preliminaries}

\subsection{Sets, sums, estimates, congruences}
We use the standard asymptotic notation in analytic number theory, with $f = O(g)$ (or $f \ll g$) meaning that there exists some constant $C > 0$ such that $|f| \le Cg$ globally. We write $f \asymp g$ when $f \ll g$ and $g \ll f$, and indicate that the implied constants may depend on a parameter $\eps$ by placing it in the subscript (e.g., $f = O_\eps(g)$, $f \ll_\eps g$, $f \asymp_\eps g$). When $g \ge 0$, we also say that $f = o(g) = o_{x \to \infty}(g)$ if and only if $f(x)/g(x) \to 0$ as $x \to \infty$. Given $q \in [1, \infty]$, we write $\|f\|_q$ for the $L^q$ norm of a measurable function $f : \R \to \C$ (using the Lebesgue measure), and $\|a_n\|_q$ for the $\ell^q$ norm of a complex sequence $(a_n)$.

We denote by $\Z_+, \Z, \R, \C, \H$ the sets of positive integers, integers, real numbers, complex numbers, and complex numbers with positive imaginary part, and set $\e(x) := \exp(2 \pi i x)$ for $x \in \R$ (or $x \in \R/\Z$). We write $\Z/n\Z$ and $(\Z/n\Z)^\times$ for the additive and multiplicative groups modulo a positive integer $n$, and denote the inverse of $c \in (\Z/n\Z)^\times$ by $\bar{c}$. We may abuse notation slightly by identifying integers 
$a, b, c$ with their residue classes modulo $n$ where this is appropriate (e.g., in congruences $a \equiv b \bar{c} \pmod{\pm n}$, $x \equiv b\bar{c}/n \pmod{1}$, or in exponentials $\e (b \bar{c}/n)$); the following simple lemma is an example of this.

\begin{lemma}[B\'ezout's identity] \label{lem:bezout}
For any relatively prime integers $a, b$, one has
\[
    \frac{1}{ab} \equiv \frac{\bar{a}}{b} + \frac{\bar{b}}{a} \quad \pmod{1}.
\]
\end{lemma}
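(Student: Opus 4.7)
The plan is to clear denominators and then verify the resulting congruence modulo each of $a$ and $b$ separately, combining via the Chinese Remainder Theorem. Concretely, the claim $\frac{1}{ab} \equiv \frac{\bar{a}}{b} + \frac{\bar{b}}{a} \pmod{1}$ is equivalent to the statement that $a\bar{a} + b\bar{b} - 1$ is divisible by $ab$, obtained by multiplying through by $ab$ (noting that $\bar{a}$ here denotes any integer representative of the inverse of $a$ modulo $b$, and similarly for $\bar{b}$, so both sides are well-defined modulo $1$).

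First I would verify the congruence modulo $b$: by definition of $\bar{a}$, one has $a\bar{a} \equiv 1 \pmod{b}$, while $b\bar{b} \equiv 0 \pmod{b}$ trivially, so $a\bar{a} + b\bar{b} - 1 \equiv 0 \pmod{b}$. By symmetry (swapping the roles of $a$ and $b$), the same expression also vanishes modulo $a$. Since $\gcd(a,b) = 1$, the Chinese Remainder Theorem yields $a\bar{a} + b\bar{b} - 1 \equiv 0 \pmod{ab}$, which is exactly the claim after dividing by $ab$.

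There is no real obstacle here; this is a standard manipulation. The only point that warrants a brief remark is that the identity is independent of the choice of integer representatives for $\bar{a}$ and $\bar{b}$: replacing $\bar{a}$ by $\bar{a} + kb$ shifts $\bar{a}/b$ by the integer $k$, and similarly for $\bar{b}/a$, so both sides of the congruence modulo $1$ are unaffected. This justifies the slight abuse of notation flagged in the paragraph preceding the lemma.
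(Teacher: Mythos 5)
Your proof is correct and takes essentially the same approach as the paper: multiply through by $ab$, verify the resulting integer congruence modulo $a$ and modulo $b$ separately, and conclude by the Chinese Remainder Theorem. The extra remark on independence of the choice of representatives for $\bar{a}$ and $\bar{b}$ is a harmless and valid clarification.
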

\begin{proof}
Note that here, $\bar{a}$ and $\bar{b}$ denote the inverses of $a$ and $b$ modulo $b$ and $a$ respectively, so we have $a \bar{a} \equiv 1 \pmod{b}$ and $b \bar{b} \equiv 1 \pmod{a}$. The conclusion follows from the Chinese remainder theorem, once we multiply the congruence by $ab$ and verify it modulo $a$ and $b$ separately.
\end{proof}

Given $N > 0$, we write $n \sim N$ for the statement that $N < n \le 2N$, usually in the subscripts of sums. Given a statement $S$, we write $\one_S$ for its truth value (e.g., $\one_{2 \mid n}$ equals $1$ when $n$ is even and $0$ otherwise); we may use the same notation for the indicator function of a set $S$ (i.e., $\one_S(x) = \one_{x \in S}$). 

Given $a_1, \ldots, a_k \in \Z$, we write $(a_1, \ldots, a_k)$ (if not all $a_i$ are $0$) and $[a_1, \ldots, a_k]$ (if none of the $a_i$ are $0$) for their greatest common divisor and lowest common multiple, among the positive integers. 
Given $a \in \Z \setminus \{0\}$, we write $\rad(a)$ for the largest square-free positive integer dividing $a$; for $b \in \Z$, we also write $a \mid b^\infty$ if and only if $\rad(a) \mid b$ (i.e., $a$ divides a large enough power of $b$), and $(a, b^\infty)$ for the greatest divisor of $a$ whose prime factors divide $b$. If $x > 0$ and $m \in \Z \setminus \{0\}$, sums like $\sum_{n \le x}$, $\sum_{n \sim x}$, $\sum_{d \mid m}$, $\sum_{d \mid m^\infty}$, $\sum_{(a, m) = 1}$, $\sum_{(a, m^\infty) = 1}$ and $\sum_{ab = m}$ are understood to range over all positive integers $n, d, a, b$ with the respective properties. 

We also keep the notations specific to smooth numbers from the introduction, for $S(x, y)$, $\Psi(x, y) = \# S(x, y)$, $\Psi_q(x, y)$, $\Psi(x, y; a, q)$, and $H(u)$.

\subsection{Multiplicative number theory}

We denote by $\mu$, $\tau$ and $\varphi$ the M\"obius function, the divisor-counting function ($\tau(n) := \sum_{d \mid n} 1$) and the Euler totient function ($\varphi(n) := \sum_{1 \le a \le n} \one_{(a, n) = 1}$). We may use various classical bounds involving these functions implicitly, including the divisor bound $\tau(n) \ll_\eps n^\eps$ (valid for all $\eps > 0$), the lower bound $\varphi(n) \gg n/(\log \log n)$, and the upper bounds
\[
    \sum_{n \sim N} \frac{1}{\varphi(n)} \ll 1, \qquad\qquad 
    \sum_{n \le x} \frac{\tau(n)}{\varphi(n)} \ll (\log x)^2.
\]
(The latter follows from the former, using that $\varphi(ab) \gg \varphi(a) \varphi(b)$ for positive integers $a, b$.)

We write $\chi \pmod{q}$ to indicate that $\chi$ is a Dirichlet character with period $q$ (of which there are $\varphi(q)$), and denote by $\cond(\chi)$ the conductor of $\chi$ (which divides $q$; this is the smallest positive integer $d$ such that there exists a Dirichlet character $\chi' \pmod{d}$ with $\chi(n) = \chi'(n) \one_{(n, q) = 1}$ for all $n \in \Z$); we say that $\chi \pmod{q}$ is \emph{primitive} when $\cond(\chi) = q$. We will require a couple of results involving Dirichlet characters, the first being essentially due to Harper.

\begin{lemma}[Contribution of small-conductor characters] \label{lem:harper}
There exist constants $\eps, \delta, C > 0$ such that for $(\log x)^C \le y \le x$, $Q \le x$ and $A > 0$, one has
\[
    \sum_{q \le Q} \frac{1}{\varphi(q)} \sum_{\substack{\chi \pmod{q} \\ 1 < \cond(\chi) \le x^\eps}} 
    \left\vert \sum_{n \in S(x, y)} \chi(n) \right\vert \ll_A 
    \Psi(x, y) 
    \left(H(u)^{-\delta} (\log x)^{-A} + y^{-\delta} \right),
\]
with $u := (\log x)/(\log y)$, $H(u) := \exp\left(u \log^{-2}(u+1)\right)$. The implicit constant is effective if $A < 1$.
\end{lemma}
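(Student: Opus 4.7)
The plan is to reduce the double sum over $q$ and induced characters to a bound on $|\sum_{n \in S(x, y)} \chi^*(n)|$ for a single primitive non-principal character $\chi^*$ of conductor at most $x^\eps$, and then invoke the strong character sum estimate of Harper, which supplies exactly the savings claimed.

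First, for each $\chi \pmod{q}$ with $1 < \cond(\chi) = d \le x^\eps$, there is a unique primitive $\chi^* \pmod{d}$ with $\chi(n) = \chi^*(n) \one_{(n, q) = 1}$. Swapping the order of summation rewrites the left-hand side as
\[
    \sum_{1 < d \le x^\eps}\ \sum_{\substack{\chi^* \pmod{d} \\ \text{primitive}}}\ \sum_{\substack{q \le Q \\ d \mid q}} \frac{1}{\varphi(q)} \left| \sum_{\substack{n \in S(x, y) \\ (n, q) = 1}} \chi^*(n) \right|.
\]
Writing $q = dm$ and applying Möbius inversion to detect $(n, m) = 1$, only $y$-smooth divisors $e$ of $m$ contribute (since $e \mid n$ for $n \in S(x, y)$ forces $e$ itself to be $y$-smooth), so the inner character sum is a signed combination of $\sum_{n' \in S(x/e, y)} \chi^*(n')$ over $e \mid m$ with $(e, d) = 1$. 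Summing the weights $1/\varphi(dm)$ over $m \le Q/d$ with $e \mid m$ produces a harmless factor of order $\tau(e)(\log x)/(e \varphi(d))$, and truncating $e$ at a small power of $x$ via Rankin's trick keeps the ratio $\Psi(x/e, y)/\Psi(x, y)$ bounded uniformly.

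The main step, which is also the chief obstacle, is an application of Harper's bound for character sums over smooth numbers: for primitive non-principal $\chi^* \pmod{d}$ with $d \le x^\eps$ and $(\log x)^C \le y \le x$, one has
\[
    \sum_{n \in S(x, y)} \chi^*(n) \ll_A \Psi(x, y) \bigl( H(u)^{-\delta} (\log x)^{-A} + y^{-\delta} \bigr),
\]
with effective implied constants when $A < 1$. This is the deep ingredient: it is not a formal consequence of Siegel--Walfisz but relies on Harper's methods for controlling short character sums along smooth numbers, and it is precisely what produces the Dickman-type factor $H(u)^{-\delta}$ rather than only a $(\log x)^{-A}$ saving. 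Once this input is granted, the number of primitive $\chi^* \pmod{d}$ is at most $\varphi(d)$, the sum $\sum_{d} 1/\varphi(d) \ll \log x$ handles the outer modulus, and the auxiliary sum over $e$ contributes a further $(\log x)^{O(1)}$ via the divisor bound; all of these logarithmic losses can be absorbed into the $(\log x)^{-A}$ factor by inflating $A$ from the outset, yielding the claimed estimate with the same form of right-hand side as in Harper's input.
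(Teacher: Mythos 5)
Your reduction to primitive characters, the M\"obius step to remove the coprimality $(n,m)=1$, and the identification of Harper's smooth-number character sum estimate as the crucial input are all correct in outline, and this is indeed the route the paper takes (it simply cites Drappeau's Lemme~5, which in turn cites Harper). However, your bookkeeping of the conductor sum has a genuine error. After replacing $q$ by $dm$, the weight attached to each pair $(d,\chi^*)$ is $\sum_{m\le Q/d} 1/\varphi(dm) \ll (\log x)/\varphi(d)$, and there are $\asymp \varphi(d)$ primitive characters modulo $d$. Multiplying these gives $\asymp \log x$ per conductor $d$, so that summing over $1<d\le x^\eps$ produces a factor $\asymp x^\eps\log x$ --- not $\sum_d 1/\varphi(d)\ll\log x$ as you assert. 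With the per-character bound you state, which is uniform in $d$, this $x^\eps$ loss cannot be absorbed: the right-hand side of the lemma is at best $(\log x)^{-O(1)}$ when $y=(\log x)^C$, so a factor $x^\eps$ is fatal.

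The resolution is that Harper's input is not a conductor-uniform bound of the shape you wrote. In his \S3.3, the savings degrade/improve with the conductor and, more importantly, the argument controls the \emph{aggregate} over all non-principal characters of small conductor (via saddle-point estimates for $L(\alpha,\chi)$ away from possible zeros, together with a zero-density input to show that only $O(1)$ characters in the range can have a zero close to the saddle point, and Siegel-type arguments for the exceptional ones). In other words, the ``deep ingredient'' is an average statement over $\chi$ and $d$, not a single-character estimate; stating it as a per-character bound and then summing trivially does not recover the lemma. The remaining steps you sketch (the M\"obius/divisor manipulations and the control of $\sum_e \Psi(x/e,y)/\varphi(e)$ against $\Psi(x,y)$) are standard and fine once the correct averaged input is in place, but as written the proposal has a genuine gap at the point where the sum over conductors is estimated.
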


\begin{proof}
This is the same as \cite[Lemme 5]{drappeau2015theoremes}, and follows from the work of Harper in \cite[Section 3.3]{harper2012bombieri}.
\end{proof}

\begin{remark}
The condition $\cond(\chi) > 1$ in \cref{lem:harper} leaves out the trivial character $\chi_0$.
\end{remark}

The second result is the classical multiplicative large sieve, as stated in \cite[Lemme 6]{drappeau2015theoremes}.

\begin{lemma}[Multiplicative large sieve] \label{lem:large-sieve}
For $Q, M, N \ge 1$ and any sequence $(a_n)$ of complex numbers, one has
\[
    \sum_{q \le Q} \frac{q}{\varphi(q)} \sum_{\substack{\chi \pmod{q} \\ \chi \textnormal{ primitive}}} 
    \left\vert 
    \sum_{M < n \le M+N} a_n \chi(n)
    \right\vert^2
    \le 
    (N + Q^2 - 1) 
    \sum_{M < n \le M+N} |a_n|^2.
\]
\end{lemma}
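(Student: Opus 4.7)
The plan is to derive this inequality from the \emph{additive} large sieve by using Gauss sums to convert primitive Dirichlet characters into additive characters at rational points. Write $S(\alpha) := \sum_{M < n \le M+N} a_n \e(n\alpha)$ for the exponential sum attached to the coefficients $(a_n)$. Recall the additive large sieve: for any $\delta$-separated points $\alpha_1, \ldots, \alpha_R \in \R/\Z$, one has
\[
    \sum_{r=1}^R |S(\alpha_r)|^2 \le (N + \delta^{-1} - 1) \sum_{M < n \le M+N} |a_n|^2.
\]
I would apply this to the Farey fractions of order $Q$, i.e.\ the points $a/q$ with $1 \le q \le Q$, $1 \le a \le q$, and $(a, q) = 1$, which are $1/Q^2$-separated since $|a_1/q_1 - a_2/q_2| \ge 1/(q_1 q_2) \ge 1/Q^2$ for any two distinct such fractions.

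The bridge to multiplicative characters is the Gauss sum identity: for $\chi$ primitive mod $q$, one has $\tau(\bar\chi)\,\chi(n) = \sum_{a \pmod q} \bar\chi(a) \e(an/q)$ for \emph{all} $n \in \Z$. When $(n, q) = 1$ this follows from the change of variables $a \mapsto na$; when $(n, q) > 1$ both sides vanish, the right-hand side by the primitivity of $\bar\chi$. Combined with the classical evaluation $|\tau(\bar\chi)|^2 = q$ for primitive $\chi$, multiplying by $a_n$, summing over $n$, and taking absolute values squared yields
\[
    q \left| \sum_n a_n \chi(n) \right|^2 = \left| \sum_{a \pmod q} \bar\chi(a)\, S(a/q) \right|^2.
\]
I would then sum over primitive $\chi \pmod q$ and bound this by extending to the sum over all characters $\chi \pmod q$, which only adds nonnegative terms. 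Expanding the square and applying orthogonality $\sum_{\chi \pmod q} \bar\chi(a) \chi(b) = \varphi(q)\, \one_{a \equiv b \pmod q,\, (ab, q) = 1}$ collapses the double sum in $a, b$, producing
\[
    \frac{q}{\varphi(q)} \sum_{\substack{\chi \pmod q \\ \chi \textnormal{ primitive}}} \left| \sum_n a_n \chi(n) \right|^2 \le \sum_{\substack{1 \le a \le q \\ (a, q) = 1}} |S(a/q)|^2.
\]
Summing over $q \le Q$ and invoking the additive large sieve with $\delta = 1/Q^2$ gives exactly the claimed bound.

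The only substantive ingredient is the additive large sieve inequality itself, which is classical and can be proved via duality from a Bessel-type inequality or via Selberg's explicit majorant construction; given this, the multiplicative step above is purely formal. The one slightly delicate point is verifying that the Gauss sum identity extends to all $n \in \Z$ and not just $(n, q) = 1$; this uses the primitivity of $\chi$ in an essential way and explains why the restriction to primitive characters appears in the statement.
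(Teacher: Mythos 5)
Your proof is correct and is precisely the classical Gauss-sum reduction to the additive large sieve at Farey fractions, which is the argument the paper points to (by citing Iwaniec--Kowalski, Theorem 7.13) without reproducing it. All the steps check out, including the extension of the Gauss sum identity to non-coprime $n$ via primitivity and the nonnegativity of the extra terms when enlarging the sum from primitive to all characters mod $q$.
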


\begin{proof}
See, for example, \cite[Theorem 7.13]{iwaniec2021analytic}. 
\end{proof}

\subsection{Fourier analysis}
Given an integrable function $f : \R \to \C$, we write
\[
    \hat{f}(\xi) := \int_{-\infty}^\infty f(t)\, \e(-\xi t)\ dt
\]
for its Fourier transform. We will need the truncated version of Poisson summation stated below.

\begin{lemma}[Truncated Poisson / Fourier completion] \label{lem:poisson}
Let $C > 0$, $x > 1$, $1 < M \ll x$, and $\Phi : \R \to \R$ be a smooth function supported in $[1/10, 10]$ such that $\|\Phi^{(j)}\|_\infty \ll_j (\log x)^{jC}$ for $j \ge 0$. Then for all positive integers $q \ll x$, any $a \in \Z/q\Z$, and any $\eps > 0$, $H \ge x^{\eps} qM^{-1}$, one has
\[
    \sum_{m \equiv a \pmod{q}} \Phi\left(\frac{m}{M}\right) = \frac{M}{q} \sum_{|h| \le H} \hat{\Phi} \left(\frac{hM}{q} \right) \e\left(\frac{ah}{q}\right)
    + 
    O_{\eps,C}\left(x^{-100}\right).
\]
\end{lemma}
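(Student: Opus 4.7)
The plan is to apply exact Poisson summation to the arithmetic progression first, and then truncate the resulting Fourier series using rapid decay of $\hat{\Phi}$. Parametrising the progression as $m = a + qn$ with $n \in \Z$, the sum becomes $\sum_{n \in \Z} \Phi((a + qn)/M)$, so applying the classical Poisson summation formula to the Schwartz-class function $n \mapsto \Phi((a+qn)/M)$ and computing its Fourier coefficients via the substitution $u = (a + qt)/M$ yields the exact identity
\[
    \sum_{m \equiv a \pmod{q}} \Phi\!\left(\frac{m}{M}\right) = \frac{M}{q} \sum_{h \in \Z} \hat{\Phi}\!\left(\frac{hM}{q}\right) \e\!\left(\frac{ah}{q}\right).
\]
It then remains to show that the contribution of $|h| > H$ on the right-hand side is $O_{\eps, C}(x^{-100})$.

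Since $\Phi$ is smooth with support in $[1/10, 10]$, $j$-fold integration by parts gives the standard Fourier decay estimate
\[
    \bigl|\hat{\Phi}(\xi)\bigr| \ll_j \|\Phi^{(j)}\|_\infty\, |\xi|^{-j} \ll_{j, C} (\log x)^{jC} |\xi|^{-j}
\]
for any $j \ge 1$. Applying this at $\xi = hM/q$ and summing the resulting $|h|^{-j}$ geometric-type tail, the contribution of $|h| > H$ to the completed sum is at most
\[
    \frac{M}{q} \sum_{|h| > H} \bigl|\hat{\Phi}(hM/q)\bigr|
    \ll_{j,C} (\log x)^{jC} \left(\frac{HM}{q}\right)^{1-j}.
\]
The hypothesis $H \ge x^{\eps} q/M$ ensures $HM/q \ge x^{\eps}$, so this tail is bounded by $(\log x)^{jC} x^{-\eps(j-1)}$, which is $\ll_{\eps,C} x^{-100}$ once $j$ is chosen as a sufficiently large constant depending on $\eps$ and $C$ (any fixed $j$ with $\eps(j-1) > 101$ handles large $x$, while the bounded-$x$ range is absorbed into the implied constant).

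This argument is essentially a textbook application of Poisson summation paired with integration-by-parts tail control, so I do not anticipate any real obstacle. The only mild subtlety is the $(\log x)^{jC}$ growth of the derivatives of $\Phi$, but since $j$ is chosen as a constant (not tending to infinity with $x$) and the polynomial-logarithm factor is overwhelmed by the genuine power saving $x^{-\eps(j-1)}$, this cost is harmless. The whole write-up should fit in a few lines.
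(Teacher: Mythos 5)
Your proof is correct: exact Poisson summation applied to $n \mapsto \Phi((a+qn)/M)$, followed by truncating the completed Fourier series via the $j$-fold integration-by-parts bound $|\hat{\Phi}(\xi)| \ll_j \|\Phi^{(j)}\|_\infty |\xi|^{-j}$ with $j$ a fixed large constant so that $x^{-\eps(j-1)}$ overwhelms $(\log x)^{jC}$, is exactly the standard argument, and you have correctly noted the one mild subtlety. The paper does not prove this lemma itself but cites Maynard (Lemma 13.4) and Drappeau (Lemme 2), which follow the same route.
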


\begin{proof}
This is the same as \cite[Lemma 13.4]{maynard2025primes} (see also \cite[Lemme 2]{drappeau2015theoremes}), following directly from the Poisson summation formula.
\end{proof}

While \cref{lem:poisson} will introduce exponential sums into our estimates, we will need an additional corollary (and generalization) of it to obtain sums of complete \emph{Kloosterman sums}, defined by
\[
    S(m, n; c) := \sum_{b \in (\Z/c\Z)^\times} 
    \e\left(\frac{mb + n\bar{b}}{c}\right),
    \qquad\qquad 
    c \in \Z_+,\ m, n \in \Z \textnormal{ (or $\Z/c\Z$)}.
\]
The following is the same as \cite[Lemma 13.5]{maynard2025primes}, and can be quickly deduced from \cref{lem:poisson}.

\begin{lemma}[Kloosterman completion] \label{lem:completion}
Let $C, x, M, \Phi$ be as in \cref{lem:poisson}. Then for all positive integers $c, q \ll x$ with $(c, q) = 1$, any $a \in \Z/q\Z$, $n \in \Z/c\Z$, and any $\eps > 0$, $H \ge x^\eps cq M^{-1}$, one has
\[
    \sum_{\substack{m \equiv a \pmod{q} \\ (m, c) = 1}} 
    \Phi\left(\frac{m}{M}\right) \e\left(\frac{\bar{m}n}{c}\right)
    =
    \frac{M}{cq} \sum_{|h| \le H} \hat{\Phi}\left(\frac{hM}{cq}\right) \e\left(\frac{ah \bar{c}}{q}\right)
    S(h\bar{q}, n; c)
    +  
    O_{\eps, C}\left(x^{-99}\right).
\] 
\end{lemma}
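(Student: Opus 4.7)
The natural strategy is to reduce the left-hand side to a sum of progressions modulo $cq$ (to which \cref{lem:poisson} applies) by splitting according to the residue of $m$ modulo $c$. Concretely, since $(c,q)=1$, I would first write
\[
    \sum_{\substack{m \equiv a \pmod{q} \\ (m,c)=1}} \Phi\!\left(\frac{m}{M}\right) \e\!\left(\frac{\bar m n}{c}\right)
    =
    \sum_{b \in (\Z/c\Z)^\times} \e\!\left(\frac{\bar b n}{c}\right)
    \sum_{\substack{m \equiv a \pmod{q} \\ m \equiv b \pmod{c}}} \Phi\!\left(\frac{m}{M}\right),
\]
since $\bar m \equiv \bar b \pmod{c}$ whenever $m \equiv b \pmod{c}$. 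By the Chinese remainder theorem, each inner sum is a sum over a single residue class $r(b) \pmod{cq}$ determined by $r(b) \equiv a \pmod{q}$ and $r(b) \equiv b \pmod{c}$.

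Next, I would apply \cref{lem:poisson} to each inner progression with modulus $cq$ and cutoff $H \ge x^\eps cq/M$. The key identity needed is the Bezout decomposition (\cref{lem:bezout}): for any $m$ with $m\equiv a\pmod q$ and $m\equiv b\pmod c$,
\[
    \frac{h\, r(b)}{cq} \equiv h\, r(b)\left(\frac{\bar c}{q} + \frac{\bar q}{c}\right) \equiv \frac{h\, a\, \bar c}{q} + \frac{h\, b\, \bar q}{c} \pmod{1},
\]
so that $\e(h r(b)/(cq)) = \e(h a \bar c/q)\, \e(h b \bar q/c)$. Substituting, each inner sum equals
\[
    \frac{M}{cq} \sum_{|h|\le H} \hat\Phi\!\left(\frac{hM}{cq}\right) \e\!\left(\frac{h a \bar c}{q}\right) \e\!\left(\frac{h b \bar q}{c}\right) + O_{\eps,C}(x^{-200}),
\]
where I have applied \cref{lem:poisson} with exponent $200$ in place of $100$ to give slack for the subsequent summation in $b$.

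Finally, I would swap the order of summation, pulling the sum over $b$ past the sum over $h$. The $b$-sum becomes
\[
    \sum_{b \in (\Z/c\Z)^\times} \e\!\left(\frac{h\bar q \cdot b + n \cdot \bar b}{c}\right) = S(h\bar q, n;\, c)
\]
by the very definition of the Kloosterman sum, yielding exactly the claimed main term. The accumulated error is $O(c \cdot x^{-200}) = O(x^{-199}) = O(x^{-100})$ since $c \ll x$. Aside from bookkeeping, there is no real obstacle: the proof is essentially a CRT repackaging of \cref{lem:poisson} together with the identification of the resulting character sum in $b$ as a Kloosterman sum. The only mild care needed is the hypothesis $cq \ll x$ required to invoke \cref{lem:poisson} at modulus $cq$; since the statement implicitly operates in regimes where $cq$ is of moderate size (with $H$ chosen accordingly), this is benign, and when $cq$ is very large relative to $M$ both sides are essentially trivial.
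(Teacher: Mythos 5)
Your proof is correct and takes essentially the same route as the paper: split the sum by the residue $b$ of $m$ modulo $c$, combine with $m \equiv a \pmod q$ via CRT into a single progression mod $cq$, apply \cref{lem:poisson} there, use \cref{lem:bezout} to factor the exponential, and recognize the resulting $b$-sum as a Kloosterman sum. Your extra care with the exponent (taking $x^{-200}$ to absorb the $\ll c$-fold summation in $b$) and your remark about applying \cref{lem:poisson} at modulus $cq$ are sound but not issues the paper dwells on; the exponent $100$ is generic and these points are benign.
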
 
\begin{proof}
Rewrite the left-hand side as
\[
    \sum_{b \in (\Z/c\Z)^\times} \e\left(\frac{\bar{b} n}{c} \right)
    \sum_{\substack{m \equiv a \pmod{q} \\ m \equiv b \pmod{c}}} \Phi\left(\frac{m}{M}\right),
\]
and apply \cref{lem:poisson} to expand the inner summation, for the unique residue class $r \in \Z/cq\Z$ which is congruent to $a \pmod{q}$ and to $b \pmod{c}$ (invoking the Chinese remainder theorem). Noting that
\[
    \e \left(\frac{rh}{cq}\right)
    =
    \e \left(\frac{rh \bar{c}}{q} + \frac{rh \bar{q}}{c}\right)
    =
    \e \left(\frac{ah \bar{c}}{q} + \frac{bh \bar{q}}{c}\right)
\]
by \cref{lem:bezout}, swapping sums and taking out the factor depending on $a$, the conclusion follows.
\end{proof}

\subsection{Bounds for exponential sums}

The simpler two of the three dispersion sums arising in our computations (see \cref{eq:disp-sums}) will be estimated using the classical bounds for Gauss and Kloosterman sums.

\begin{lemma}[Gauss sum bound] \label{lem:gauss}
For any $a \in \Z$, $q \in \Z_+$, and Dirichlet character $\chi \pmod{q}$, one has
\[
    \left\vert \sum_{b \in \Z/q\Z} \chi(b)\ \e\left(\frac{ab}{q}\right) \right\vert
    \le 
    \cond(\chi)^{1/2} \sum_{d \mid (a, q)} d.
\]
\end{lemma}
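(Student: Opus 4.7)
My plan is to reduce to the case of prime power moduli via the Chinese Remainder Theorem, and then compute directly using the classical evaluation of primitive Gauss sums. Set $q^* := \cond(\chi)$ and write $\chi$ as induced from a primitive Dirichlet character $\chi^*$ modulo $q^*$, so that $\chi(b) = \chi^*(b)\one_{(b,q)=1}$.

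\emph{Multiplicative reduction.} Suppose $q = q_1 q_2$ with $(q_1, q_2) = 1$, and write $\chi = \chi_1 \chi_2$ with each $\chi_i$ of modulus $q_i$. Parametrizing $b \bmod q$ via the CRT decomposition $b \leftrightarrow (b_1, b_2) \in \Z/q_1\Z \times \Z/q_2\Z$ and using \cref{lem:bezout} to split the exponential via $1/(q_1 q_2) \equiv \bar{q_2}/q_1 + \bar{q_1}/q_2 \pmod 1$, the sum factors as
\[
    \sum_{b \bmod q} \chi(b)\, \e\!\left(\tfrac{ab}{q}\right)
    =
    \left(\sum_{b_1 \bmod q_1} \chi_1(b_1)\, \e\!\left(\tfrac{a\bar{q_2} b_1}{q_1}\right)\right)
    \cdot
    \left(\sum_{b_2 \bmod q_2} \chi_2(b_2)\, \e\!\left(\tfrac{a\bar{q_1} b_2}{q_2}\right)\right).
\]
Since $\cond(\chi) = \cond(\chi_1)\cond(\chi_2)$, and both $(a, q)$ and the divisor sum $\sum_{d \mid (a, q)} d$ split multiplicatively with the unit twists $\bar{q_1}, \bar{q_2}$ preserving the relevant GCDs, it suffices to prove the bound when $q = p^k$ is a prime power.

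\emph{Prime power case.} Let $q = p^k$ and let $\chi$ be induced from a primitive character $\chi^*$ modulo $q^* = p^j$ with $0 \le j \le k$. For $j = 0$, the sum equals the Ramanujan sum $c_{p^k}(a) = \sum_{d \mid (a, p^k)} d\, \mu(p^k/d)$, so the bound follows from $|\mu| \le 1$ since $\cond(\chi) = 1$. For $j \ge 1$, parametrize each $b \in (\Z/p^k\Z)^\times$ uniquely as $b = v + p^j u$ with $v \bmod p^j$ satisfying $(v, p) = 1$ and $u \bmod p^{k-j}$; using the periodicity $\chi^*(b) = \chi^*(v)$ and factoring $\e(ab/p^k) = \e(av/p^k)\, \e(au/p^{k-j})$ yields
\[
    \sum_{b \bmod q} \chi(b)\, \e\!\left(\tfrac{ab}{q}\right)
    =
    \left(\sum_{u \bmod p^{k-j}} \e\!\left(\tfrac{au}{p^{k-j}}\right)\right)
    \sum_{\substack{v \bmod p^j \\ (v, p) = 1}} \chi^*(v)\, \e\!\left(\tfrac{av}{p^k}\right).
\]
The $u$-sum vanishes unless $p^{k-j} \mid a$; writing $a = p^{k-j} a'$ in the nonzero case reduces the $v$-sum to the primitive Gauss sum modulo $p^j$ evaluated at $a'$.

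\emph{Conclusion and main obstacle.} By the classical identity for primitive characters, $\left|\sum_v \chi^*(v) \e(a' v/p^j)\right| = \sqrt{p^j}$ when $(a', p) = 1$, and $0$ otherwise; the nonvanishing case forces $(a, q) = p^{k-j}$ exactly, yielding
\[
    \left|\sum_{b \bmod q} \chi(b)\, \e\!\left(\tfrac{ab}{q}\right)\right|
    =
    p^{k-j}\sqrt{p^j}
    =
    (a, q)\sqrt{\cond(\chi)}
    \le
    \sqrt{\cond(\chi)} \sum_{d \mid (a, q)} d.
\]
The main obstacle is the bookkeeping in the multiplicative reduction: tracking the unit twists from the CRT decomposition and checking that they preserve both the conductor factorization and the relevant GCDs. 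The classical primitive evaluation $\left|\sum_v \chi^*(v) \e(v/p^j)\right| = \sqrt{p^j}$ itself follows from expanding the squared modulus and applying orthogonality of additive characters, a standard short computation.
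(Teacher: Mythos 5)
Your argument is correct, and it is not the route the paper takes: the paper's proof of \cref{lem:gauss} is just a citation to Lemmas~3.1 and 3.2 of Iwaniec--Kowalski \cite{iwaniec2021analytic}, which derive the bound top-down from the classical formula expressing the Gauss sum of an imprimitive character mod $q$ in terms of the Gauss sum of its primitive inducer, together with $|\tau(\chi^*)| = \cond(\chi)^{1/2}$. Your proof is a self-contained bottom-up alternative: reduce to prime powers by CRT (correctly noting that the conductor, the GCD $(a,q)$, and the divisor-sum all factor multiplicatively over coprime factorizations, and that the B\'ezout twists $\bar{q_1},\bar{q_2}$ are units and hence preserve the relevant GCDs), then evaluate the prime-power sum directly via the splitting $b = v + p^j u$. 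The case analysis matches the stated bound cleanly: $j=0$ gives a Ramanujan sum with $|c_{p^k}(a)| \le \sum_{d\mid(a,p^k)}d$ by the triangle inequality; for $j\ge 1$ the $u$-sum forces $(a,p^k) = p^{k-j}$ in the nonvanishing case, and then the modulus is exactly $p^{k-j}\sqrt{p^j} \le \sqrt{p^j}\,\sigma(p^{k-j})$. What this approach buys is elementarity — the only nonstandard input is $|\tau(\chi^*)| = \sqrt{p^j}$ for primitive $\chi^*$ — at the cost of extra bookkeeping in the CRT step. One minor precision you should make explicit: in $b = v + p^j u$, the factor $\e(av/p^k)$ depends on the \emph{integer representative} of $v \bmod p^j$ (replacing $v$ by $v + p^j$ multiplies it by $\e(a/p^{k-j})$), so fix representatives $v \in \{1, \dots, p^j\}$ with $(v,p)=1$ and $u \in \{0, \dots, p^{k-j}-1\}$ before writing the factorization. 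With that fixed, your argument is complete.
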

\begin{proof}
This follows from \cite[Lemmas 3.2 and 3.1]{iwaniec2021analytic}, and is also used in \cite[Section 3.2]{drappeau2015theoremes}.
\end{proof}

\begin{lemma}[Weil and Ramanujan bounds] \label{lem:weil}
For $c \in \Z_+$ and $m, n \in \Z$ (or $\Z/c\Z$), one has
\[
    S(m, n; c) \ll \tau(c)\, (m, n, c)^{1/2} c^{1/2}.
\]
For $m = 0$, we have in fact $|S(0, n; c)| \le (n, c)$.
\end{lemma}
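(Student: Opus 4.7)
The plan is to reduce to prime powers via twisted multiplicativity, apply Weil's bound at the prime level, and handle higher prime powers by a direct $p$-adic computation; the $m = 0$ assertion follows separately from the classical formula for the Ramanujan sum. The only deep input is Weil's theorem at the prime level, which I would invoke as a black box (it is a consequence of the Riemann hypothesis for the Artin--Schreier curve $y^p - y = mx + n/x$); everything else is elementary bookkeeping.

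For the reduction, a change of variables $b = c_2 \beta_1 + c_1 \beta_2$ via the Chinese Remainder Theorem, combined with \cref{lem:bezout}, yields the standard twisted multiplicativity
\[
    S(m, n; c_1 c_2) = S(m, n \bar{c_2}^2; c_1)\, S(m, n \bar{c_1}^2; c_2)
\]
whenever $(c_1, c_2) = 1$. Since $\bar{c_j}$ is a unit modulo $c_i$, one has $(m, n\bar{c_j}^2, c_i) = (m, n, c_i)$, and $(m,n,c_1 c_2) = (m,n,c_1)(m,n,c_2)$, so iterating this identity reduces the lemma to the prime-power estimate $|S(m,n;p^k)| \le 2(m,n,p^k)^{1/2} p^{k/2}$; multiplying such bounds over the distinct prime powers dividing $c$ produces the total factor of $2^{\omega(c)} \le \tau(c)$ in the final estimate.

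At $k = 1$, the bound $|S(m,n;p)| \le 2\sqrt{p}$ in the coprime case $p \nmid (m,n)$ is Weil's theorem, while the degenerate cases ($p$ dividing $m$, $n$, or both) reduce to Ramanujan sums of absolute value at most $p - 1 \le 2(m,n,p)^{1/2} p^{1/2}$. For $k \ge 2$, I would carry out an elementary $p$-adic stationary-phase computation: write $b = b_0 + p^{\lceil k/2 \rceil} b_1$ with $(b_0, p) = 1$ and $b_1 \in \Z/p^{\lfloor k/2 \rfloor}\Z$, expand $\bar{b} \equiv \bar{b_0} - p^{\lceil k/2 \rceil} b_1 \bar{b_0}^2 \pmod{p^k}$, and execute the sum over $b_1$ to restrict $b_0$ to the solutions of a quadratic congruence modulo $p^{\lfloor k/2 \rfloor}$. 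A short case analysis based on the $p$-adic valuations of $m$ and $n$ (with minor modifications at $p = 2$) then gives the required bound; this is classical, going back to Sali\'e and Estermann. The main obstacle, insofar as there is one, is merely keeping track of the GCDs $(m,n,p^k)$ through this case analysis.

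For the second claim, one identifies $S(0, n; c) = \sum_{d \mid (n,c)} d\, \mu(c/d)$ as the classical Ramanujan sum, obtained by M\"obius inversion from $\sum_{b \in \Z/c\Z} \e(bn/c) = c \cdot \one_{c \mid n}$. This quantity is multiplicative in $c$ and takes values $\varphi(p^k)$, $-p^{k-1}$, or $0$ on prime powers $p^k$, according to whether $v_p(n) \ge k$, $v_p(n) = k - 1$, or $v_p(n) < k - 1$; each is bounded in magnitude by $(n, p^k)$, and multiplicativity yields $|S(0, n; c)| \le (n, c)$ as claimed.
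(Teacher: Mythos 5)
The paper's ``proof'' of this lemma is a citation: it refers the Weil bound to \cite[Corollary 11.12]{iwaniec2021analytic} and dispatches the Ramanujan bound by M\"obius inversion. Your proposal is, in substance, a self-contained sketch of the standard argument underlying that citation: reduction to prime powers by twisted multiplicativity, Weil at prime level, a Sali\'e--Estermann stationary-phase computation at higher prime powers, and the classical Ramanujan-sum formula for $m = 0$. This is the same route; you have simply unpacked what the paper treats as a black box, at the cost of leaving the hardest case ($k \ge 2$, with $p = 2$ modifications) as a promissory note rather than carrying it out.

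One intermediate inequality is off. In the degenerate cases at prime level you write that the Ramanujan sum has ``absolute value at most $p-1 \le 2(m,n,p)^{1/2}p^{1/2}$.'' The second inequality holds only when $p \mid (m,n)$, where the right-hand side is $2p$. If exactly one of $m,n$ is divisible by $p$, then $(m,n,p)=1$ and $p-1 > 2\sqrt{p}$ once $p \ge 5$, so the chain fails. The conclusion is still true because in that subcase $S(m,n;p)$ is $\pm 1$, so $|S(m,n;p)| = 1 \le 2\sqrt{p}$ directly; but the argument needs to split the degenerate cases rather than use the uniform upper bound $p-1$. The rest — the twisted multiplicativity identity, the GCD bookkeeping $(m,n\bar{c_j}^2,c_i)=(m,n,c_i)$, the factor $2^{\omega(c)} \le \tau(c)$, and the prime-power evaluation of the Ramanujan sum with the observation $|c_{p^k}(n)| \le (n,p^k)$ — is correct.
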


\begin{proof}
The first (Weil) bound is \cite[Corollary 11.12]{iwaniec2021analytic}, while the second (Ramanujan) bound can be deduced by M\"obius inversion.
\end{proof}

\begin{lemma}[Incomplete Weil bound] \label{lem:incomplete-weil}
Let $x > 1$, $1 < M \ll x$, and let $n, c, k, \ell \ll x$ be positive integers. Then for any $\eps > 0$, one has
\[
    \sum_{\substack{(m, ck) = 1 \\ \ell \mid m \\ m \le M}} 
    \frac{m}{\varphi(m)} \e\left(\frac{\bar{m}n}{c}\right)
    \ll_{\eps}
    x^\eps \left((n, c)^{1/2} c^{1/2} + \frac{(n, c)}{\ell c} M \right).
\]
\end{lemma}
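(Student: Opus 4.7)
The plan is to reduce to the standard incomplete Weil estimate for sums of the shape $\sum_{m \le X, (m, c)=1} \e(\bar{m} N/c)$, which follows from Fourier completion modulo $c$ combined with \cref{lem:weil}. The two nuisance factors, namely the arithmetic weight $m/\varphi(m)$ and the extra coprimality condition $(m, k) = 1$, are separated out by M\"obius-type manipulations at the very beginning.

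\textbf{Step 1 (remove the weight).} Use the identity $m/\varphi(m) = \sum_{d \mid m} \mu^2(d)/\varphi(d)$ to expand the weight, and swap the order of summation. Because $(m, ck) = 1$ and $d \mid m$, the constraint $(d, ck) = 1$ is automatic, so the left-hand side of the lemma becomes
\[
    \sum_{\substack{d \text{ sq.-free} \\ (d, ck) = 1}} \frac{1}{\varphi(d)}
    \sum_{\substack{(m, ck) = 1 \\ [d, \ell] \mid m \le M}} \e\!\left(\frac{\bar{m} n}{c}\right).
\]
Only $d \le M$ contribute (otherwise the inner sum is empty).

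\textbf{Step 2 (remove coprimality with $k$).} Set $m = [d, \ell] m'$ and expand $\one_{(m', k) = 1} = \sum_{e \mid (m', k)} \mu(e)$, then set $m' = e m''$. Writing $a := [d, \ell] e$, which is coprime to $c$ (since each of $d, \ell, e$ is), the inner sum becomes
\[
    \sum_{e \mid k} \mu(e) \sum_{\substack{(m'', c) = 1 \\ m'' \le M/a}} \e\!\left(\frac{\bar{m}'' \bar{a} n}{c}\right).
\]

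\textbf{Step 3 (incomplete Weil bound for the reduced sum).} For each choice of $d, e$, apply Fourier completion modulo $c$ to the sharp interval indicator $\one_{m'' \le M/a}$, writing it as $c^{-1} \sum_h \hat{\one}(h) \e(-m'' h /c)$ with $|\hat{\one}(h)| \ll \min(M/a, c/|h|)$ where $|h|$ denotes the least residue in absolute value. This expresses the inner sum as a linear combination of complete Kloosterman sums $S(-h, \bar{a} n; c)$. By \cref{lem:weil}, the $h = 0$ contribution is $\ll (n, c) (M/a)/c$, while the $h \ne 0$ contribution is (after routine bookkeeping over divisors $(h, n, c)$) bounded by $c^{1/2 + \eps}$. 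Using the trivial inequality $c^{1/2} \le (n, c)^{1/2} c^{1/2}$, we obtain
\[
    \sum_{\substack{(m'', c) = 1 \\ m'' \le M/a}} \e\!\left(\frac{\bar{m}'' \bar{a} n}{c}\right)
    \ll_\eps
    x^\eps \left((n, c)^{1/2} c^{1/2} + \frac{(n, c) M}{a c}\right).
\]

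\textbf{Step 4 (reassemble).} The sum over $e \mid k$ contributes at most $\tau(k) \ll x^\eps$ factors. For the outer sum over $d$, use $\sum_{d \le M} \mu^2(d)/\varphi(d) \ll \log M \ll x^\eps$ for the first term, while for the second term use $1/[d, \ell] \le 1/\ell$ to pull out a $1/\ell$ and again bound $\sum_d \mu^2(d)/\varphi(d) \ll x^\eps$. Assembling gives the claimed bound
\[
    \ll_\eps x^\eps \left((n, c)^{1/2} c^{1/2} + \frac{(n, c) M}{\ell c}\right).
\]

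The only nontrivial step is Step 3, where one must be careful with the completion estimate and the divisor-sum manipulation $\sum_{0 < |h| < c} \min(M/a, c/|h|) (h, n, c)^{1/2}$; the rest is bookkeeping. No sharper dependence on $(n, c)$ than $(n, c)^{1/2}$ is needed in the first term (the bound $c^{1/2+\eps}$ already suffices there), which is why we can absorb Weil's contribution into the stated form.
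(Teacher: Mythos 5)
Your proof is correct and follows essentially the same route as the paper's sketch: expand the weight $m/\varphi(m)$ multiplicatively, strip the coprimality condition with $k$ by M\"obius inversion, change variables to absorb the resulting divisibility, complete the sum to Kloosterman sums, and bound the $h=0$ and $h\neq 0$ terms via \cref{lem:weil}. The only cosmetic differences are that you use the finite identity $m/\varphi(m)=\sum_{d\mid m,\,\mu^2(d)=1}1/\varphi(d)$ rather than the paper's $\sum_{v\mid m^\infty}v^{-1}$ (both valid, with the weights over $d$ or $v$ contributing $O(\log M)\ll x^\eps$), and that you complete via the discrete Fourier expansion of a sharp interval indicator rather than invoking the smoothed \cref{lem:completion} --- both of which yield the same final estimate.
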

\begin{proof}
This follows immediately from \cite[(2.5)]{drappeau2015theoremes} and the divisor bound (see also \cite[Lemme 4]{fouvry1982repartition} and \cite[Lemma 16.1]{maynard2025primes}). It can be proven by expanding $m\varphi(m)^{-1} = \sum_{v \mid m^\infty} v^{-1}$ and $\one_{(m, k)} = \sum_{d \mid k} \mu(d) \one_{d \mid m}$, changing variables $m \gets m [\ell, \rad(v), d]$, completing sums via a result like \cref{lem:completion}, and finally applying \cref{lem:weil} for the terms $h = 0$ and $h \neq 0$ separately.
\end{proof}

To estimate the first dispersion sum, will crucially need the following bound for sums of Kloosterman sums, which is an optimization of \cite[Theorem 11]{deshouillers1982kloosterman} (see also \cite[Lemma 5]{bombieri1987primes2}).

\begin{theorem}[DI-type Kloosterman bound] \label{thm:di-type-bound}
Let $1 \ll M, N, R, S, C \ll x^{O(1)}$, $(a_{m,r,s})$ be a complex sequence supported in $m \sim M, r \sim R, s \sim S$, and $\omega \in \R/\Z$. Also, let $g(t)$ be a smooth function supported on $t \asymp 1$, with bounded derivatives $\|g^{(j)}\|_\infty \ll_{j} 1$ for $j \ge 0$. Then for any $\eta > 0$, one has
\[
\begin{aligned}
    \sum_{\substack{r \sim R \\ s \sim S \\ (r, s) = 1}}
    \sum_{m \sim M} a_{m,r,s} \sum_{n \sim N} \e(n\omega)
    \sum_{(c, r) = 1} g\left(\frac{c}{C}\right)\, S(m\bar{r}, \pm n; sc)
    \ll_\eta 
    x^\eta\,
    \left(1 + \frac{C}{R\sqrt{S}}\right)^{\theta_{\max}}\, 
    \|a_{m,r,s}\|_2
    \\
    \times 
    \sqrt{NRS}
    \left(\frac{C^2}{R} (M + RS)(N + RS) + MN \right)^{1/2},
\end{aligned}
\]
where we recall that $\theta_{\max} \le 7/32$ by \cref{thm:kim-sarnak}. (The `$\pm n$' notation indicates that either consistent choice of sign is allowable.)
\end{theorem}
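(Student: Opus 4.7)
The plan is to follow the strategy of \cite[Theorem 11]{deshouillers1982kloosterman} (equivalently \cite[Lemma 5]{bombieri1987primes2}), while carefully tracking the contribution of exceptional Maass eigenvalues. First, I would apply the Cauchy--Schwarz inequality in the $(m,r,s)$-variables, extracting a factor of $||a_{m,r,s}||_2$; it then suffices to bound
\[
T := \sum_{\substack{r \sim R,\, s \sim S \\ (r,s)=1}}\, \sum_{m \sim M} \left| \sum_{n \sim N} \e(n\omega) \sum_{(c,r)=1} g(c/C)\, S(m\bar r,\pm n; sc) \right|^2
\]
by the square of the bracketed expression following $||a_{m,r,s}||_2$ in the theorem statement.

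Next, I would smooth the $m$-sum by a redundant bump function of scale $\asymp M$, open the modulus squared to expose pairs $(n_1,n_2) \sim N$ and $(c_1,c_2) \asymp C$ together with the product of Kloosterman sums $S(m\bar r,\pm n_1; sc_1)\overline{S(m\bar r,\pm n_2; sc_2)}$, and apply Poisson summation in $m$ (\cref{lem:poisson}) modulo $sc_1c_2/(c_1,c_2)$. This collapses the $m$-sum to a congruence condition; using the multiplicativity of Kloosterman sums, the remaining configuration reduces to sums of Kloosterman sums in the $c$-variable with modulus $\sim sc$, amenable to the Kuznetsov trace formula on $\Gamma_0(s)$. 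The spectral large sieve inequalities of \cite[Theorems 5--6]{deshouillers1982kloosterman} then decompose these into contributions from holomorphic cusp forms, Maass cusp forms (tempered and exceptional), and Eisenstein series of level $s$. The tempered Maass, holomorphic, and Eisenstein parts are controlled by Weil--Ramanujan-type input (\cref{lem:weil}), and combine to produce the main factor $\sqrt{NRS}\,(C^2 R^{-1}(M+RS)(N+RS) + MN)^{1/2}$.

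The main obstacle is to extract the sharp factor $(1 + C/(R\sqrt S))^{\theta_{\max}}$ from the exceptional Maass spectrum, rather than the cruder $x^{\theta_{\max}/2}$ loss present in, e.g., \cite[Theorem 9]{deshouillers1982kloosterman}. An exceptional form on $\Gamma_0(s)$ contributes a factor scaling like a Kuznetsov transform parameter of size $\sim C$, raised to the power $2\theta_q$, with $\theta_q \le \theta_{\max} \le 7/32$ by \cref{thm:kim-sarnak}. Averaging over the levels $s \sim S$ and residues $r \sim R$, together with the level-aspect bound $s^{-\theta_q/2+o(1)}$ on the first Fourier coefficients of exceptional forms, allows one to replace the effective spectral-parameter size $\sim C$ by $\sim C/(R\sqrt S)$; interpolating with the trivial bound (which wastes nothing when $C \ll R\sqrt S$) yields precisely $(1 + C/(R\sqrt S))^{2\theta_{\max}}$ as the exceptional loss in $T$. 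Taking the square root from the initial Cauchy--Schwarz and absorbing divisor-function and $\theta_{\max}$-epsilon losses into the $x^\eta$ factor then completes the plan.
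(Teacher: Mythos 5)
Your opening Cauchy--Schwarz in $(m,r,s)$ already departs from the paper's route, and the plan goes astray at the crucial step.

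The paper deduces \cref{thm:di-type-bound} from \cref{thm:di-kloosterman}, whose proof tracks the proof of Deshouillers--Iwaniec's Theorem 11 directly: one keeps the bilinear form in $(m,n)$ intact, applies the Kuznetsov formula to the $c$-sum (with level $rs$, not $s$), and the exceptional contribution factorizes as a product
\[
\sum_{r,s}\sum_{\lambda_{j,rs}<1/4} \frac{\hat f(\kappa_{j,rs})}{\ch(\pi\kappa_{j,rs})}\Bigl(\sum_m \e(m\omega)\,\bar{\rho_{j,\infty}}(m)\Bigr)\Bigl(\sum_n b'_{n,r,s}\,\rho_{j,1/s}(n)\Bigr).
\]
Only \emph{then} is Cauchy--Schwarz applied, on the spectral side, separating these two Fourier-coefficient sums. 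The entire source of the $(1 + C/(R\sqrt S))^{\theta_{\max}}$ factor is that the $m$-sum has coefficients $\e(m\omega)$ that are independent of the level $rs$; this permits the use of \cref{eq:like-di-thm-7} (DI Theorem 7) with the much larger threshold $X_1 \asymp \max(M, (RS)^2/M)$, whereas the $n$-sum (with $r,s$-dependent coefficients) is handled by \cref{eq:like-di-thm-5} (DI Theorem 5) at the smaller threshold $X_2 \asymp \max(1, RS/N)$. The factor $(1+X)/\sqrt{X_1X_2}$ then reproduces the stated $\theta_{\max}$-power.

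Your plan takes Cauchy--Schwarz in $(m,r,s)$ on the geometric side first, which destroys exactly this bilinear structure before Kuznetsov is ever applied. After opening the square, Poisson in $m$, and (eventual) Kuznetsov, you would have two $n$-variables inside and no variable carrying level-independent coefficients on the outside; there is no clean way to invoke the averaged large sieve \cref{eq:like-di-thm-7} from that position. The justification you give for the improvement --- ``the level-aspect bound $s^{-\theta_q/2+o(1)}$ on the first Fourier coefficients of exceptional forms'' --- is not the mechanism used anywhere in this argument and, as stated, would not produce the $R$- and $\sqrt{S}$-dependence: the saving comes from averaging the \emph{squared} Fourier-coefficient sums over all levels $q \sim RS$ via \cref{lem:di-large-sieve}, not from a pointwise bound on any individual Fourier coefficient. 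Two further slips: the exceptional forms live on $\Gamma_0(rs)$, not $\Gamma_0(s)$ (with Fourier expansions at the cusps $\infty$ and $1/s$), and the relevant large-sieve inputs are DI Theorems 5 and 7, not 5--6. In short, the skeleton you propose would, if pushed through, land back on the cruder $x^{\theta_{\max}/2}$-type loss of DI Theorem 9, because it never gets the level-averaged input into position; the genuinely new step in the paper is precisely the substitution of DI Theorem 7 at the right place in DI Theorem 11's proof.
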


\cref{thm:di-type-bound} makes use of the spectral theory of automorphic forms, and follows from a variation of the landmark arguments of Deshouillers--Iwaniec (all of the necessary ingredients being already present in \cite{deshouillers1982kloosterman}). We leave its proof, which requires much additional notation, to \cref{sec:deshouillers-iwaniec}.

\section{The triple convolution estimate} \label{sec:triple-conv-estimate}

Here we state our main technical result, \cref{thm:triple-conv-estimate}, which concerns the distribution in arithmetic progressions of convolutions of three bounded sequences (we point the reader to similar results in \cite[Theorem 4]{bombieri1986primes}, \cite[Th\'eor\`eme 3]{drappeau2015theoremes}, \cite[Lemma 2.3]{drappeau2017smooth}, and \cite[Proposition 8.3]{maynard2025primes}). We then deduce \cref{thm:distribution} from \cref{thm:triple-conv-estimate}.

\begin{remark}
One can apply the most efficient convolution estimates directly to the setting of smooth numbers (and smooth-supported multiplicative functions), since these can essentially be factorized into any number of factors of pre-specified sizes. By contrast, in the case of primes, combinatorial decompositions of the von Mangoldt function produce more types of convolution sums, requiring different estimates for different ranges (typically organized into `Type I' and `Type II' information).
\end{remark}

To achieve a power-saving in \cref{thm:triple-conv-estimate}, appropriate for the application to smooth numbers, one needs a better approximation for indicator functions of the form $\one_{k \equiv 1 \mod{r}}$ than $\frac{1}{\varphi(r)}\one_{(k, r) = 1}$ (given $r \in \Z_+$ and $k \mod{r}$). Drappeau \cite{drappeau2015theoremes,drappeau2017smooth} noticed that since
\[
    \one_{k \equiv 1 \mod{r}} = \frac{1}{\varphi(r)} \sum_{\chi \pmod{r}} \chi(k) =
    \frac{\one_{(k, r) = 1}}{\varphi(r)}
    \sum_{\substack{\chi \text{ primitive} \\ \cond(\chi) \mid r}} \chi(k),
\]
one can instead consider the partial sum
\begin{equation} \label{eq:omega-d}
    \omega_D(k; r) := \sum_{\substack{\chi \in \mX_D \\ \cond(\chi) \mid r}} \chi(k),
    \qquad\quad
    \text{where}
    \qquad\quad
    \mX_D := \{\chi \text{ primitive} : \cond(\chi) \le D\},
\end{equation}
and work with the error term
\[
    \mE_D(k; r) := \one_{k \equiv 1 \pmod{r}} - \frac{\one_{(k,r) = 1}}{\varphi(r)} \omega_D(k; r)
    \ = 
    \frac{\one_{(k,r) = 1}}{\varphi(r)} \sum_{\substack{\chi \pmod{r} \\ \cond(\chi) > D}} \chi(k).
\]
One should then expect to obtain better bounds for $\mE_D(k; r)$ when $D$ is moderately large (i.e., a small power of $x$) than when $D = 1$ (and $\omega_1(k; r) = 1$). We also note the crude bound 
\[
    |\omega_D(k; r)| \le |\mX(D)| \le D^2,
\]
which may be used implicitly in our proofs.

\begin{theorem}[Triple convolution estimate] \label{thm:triple-conv-estimate}
For all sufficiently small $\eps > 0$, there exists $\delta > 0$ such that the following holds. Let $a_1, a_2$ be coprime nonzero integers, and let $M, N, L, R, x > 2$ satisfy
\begin{equation}\label{eq:conditions}
\begin{gathered}
    a_1, a_2 \ll x^\delta, 
    \qquad 
    MNL \asymp x,
    \qquad\quad 
    x^{(1-\eps)/2} \ll R \ll x^{-5\eps} NL \ll x^{(2/3) - 11\eps},
    \qquad 
    N^9 L^8 \ll x^3 R^4,
    \\
    N \ll \frac{x^{1-2\eps}}{R}, \qquad\quad N^4 L^7 \max(1, N/L)^{2\theta} \ll x^{2-17\eps}R^2, 
    \qquad 
    N^{12-6\theta} L^{11-6\theta} \ll x^{6-4\theta-5\eps} R^2,
\end{gathered}
\end{equation}
for $\theta = \theta_{\max}$.
Then for any $1$-bounded complex sequences $(\alpha_m)$, $(\beta_n)$, $(\gamma_\ell)$, one has
\begin{equation} \label{eq:triple-conv-estimate}
    \sum_{\substack{r \sim R \\ (r, a_1a_2) = 1}}
    \left\vert \sum_{m \sim M} \sum_{n \sim N} \sum_{\ell \sim L} \alpha_m \beta_n \gamma_\ell\,
    \mE_D(mn\ell \bar{a_1} a_2; r)
    \right\vert
    \ll_\eps  
    \frac{x (\log x)^4}{\min\left(x^\delta, \sqrt{D}\right)},
\end{equation}
for all $1 \le D \le x^\eps$.
\end{theorem}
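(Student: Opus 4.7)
The plan is to combine Drappeau's refined dispersion method \cite{drappeau2015theoremes} with Maynard's exponential-sum arrangement \cite{maynard2020primes} and the optimized Kloosterman bound \cref{thm:di-type-bound}. Writing $k := mn\ell\bar{a_1}a_2$ and inserting a redundant average over $e \sim E = x^{o(1)}$ and over the residue of $n\ell$ modulo $e$ (the \emph{deamplification} trick), I would apply Cauchy--Schwarz in the $(r,m)$ variables and expand the square; following \cref{fig:outline}, this corresponds to \cref{prop:setup} and produces three dispersion sums $\mS_1, \mS_2, \mS_3$. Here $\mS_1$ is the dispersion of $\one_{n\ell \equiv k \pmod{re}}$ against itself, $\mS_2$ is the dispersion of the $\varphi(r)^{-1}\omega_D(k;r)$ approximation, and $\mS_3$ is the cross term. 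The main-term matching (\cref{prop:main-terms}) ensures that all $\omega_D$ contributions of small-conductor characters cancel, leaving only genuine error terms.

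The heart of the argument—and the main obstacle—is bounding $\mS_1$. Fourier-completing the implicit sum over $m$ via \cref{lem:poisson}, flipping moduli via \cref{lem:bezout}, and making the substitution $t := (k - n\ell)/(re)$ reduce $\mS_1$ to exponential sums of the form $\sum_t \sum_k u_k \sum_{n,\ell} \beta_n \gamma_\ell\, \one_{n\ell \equiv k \pmod{te}}\, \e(ht\,\overline{n\ell}/k)$, with $t$ small thanks to $R \ll x^{-5\eps}NL$. I then apply a second Cauchy--Schwarz in $(t,n,k)$—keeping the congruence modulo $te$ inside to retain the deamplification gain—and Fourier-complete the sum in $n$, which yields sums of complete Kloosterman sums $S((h_1\ell_2 - h_2\ell_1)\overline{\ell_1\ell_2},\, j;\, k)$. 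The diagonal contributions $h_1\ell_2 = h_2\ell_1$, $\ell_1 = \ell_2$, and $j = 0$ are acceptable by direct counting together with \cref{lem:weil}. For the genuine off-diagonal piece, the crucial step indicated in \cref{subsec:limitations} is to first fix $t$ and the classes $\ell_1, \ell_2 \pmod{t}$, losing only a factor $x^{O(\delta)} = x^{o(1)}$ because $t \ll x^{O(\delta)}$ is tiny; this decouples the character $\e(j\overline{\ell_1}/t)$ from the level $\ell_1\ell_2$ and makes \cref{thm:di-type-bound} applicable with the sharp exponent $\theta_{\max}$. The conditions $N^4 L^7 \max(1,N/L)^{2\theta} \ll x^{2-16\eps}R^2$ and $N^{12-6\theta}L^{11-6\theta} \ll x^{6-4(\theta+\eps)}R^2$ in \cref{eq:conditions} are precisely the bookkeeping that makes this final Kloosterman estimate a genuine power-saving.

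The secondary sums $\mS_2$ and $\mS_3$ can be handled by more classical exponential-sum tools. After Fourier completion, $\mS_3$ decomposes through primitive characters of conductor $\le D$, producing Gauss sums which are treated via \cref{lem:gauss} combined with the multiplicative large sieve \cref{lem:large-sieve}; this is the mechanism that produces the $D^{-1/2}$ saving in \cref{eq:triple-conv-estimate}. The sum $\mS_2$ reduces to incomplete Kloosterman sums controlled by \cref{lem:incomplete-weil}. Combining the resulting bounds with \cref{prop:main-terms} and choosing the parameters $\delta, E$ small enough in terms of $\eps$, every constraint of \cref{eq:conditions} yields an $x^{\Omega(\delta)}$ saving, and together they give \cref{eq:triple-conv-estimate}. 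The hard part is purely the $\mS_1$ estimate: the rest is technical but follows the templates in \cite{drappeau2015theoremes,maynard2020primes}, with care taken to obtain genuine power savings (rather than $\log$-savings) throughout and to maintain uniformity in the residues $a_1, a_2 \ll x^\delta$.
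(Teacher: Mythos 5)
Your proposal follows essentially the same route as the paper: deamplification over $e$, Cauchy--Schwarz in $(r,m)$ giving $\mS_1, \mS_2, \mS_3$, Poisson completion and Bezout-flipping to reduce $\mS_1$ to Kloosterman sums with a second Cauchy--Schwarz in $(t,n,k)$, fixing $\ell_1,\ell_2 \pmod t$ to decouple the phase from the level before applying \cref{thm:di-type-bound}, and handling $\mS_2, \mS_3$ via the incomplete Weil bound and Gauss sums plus the multiplicative large sieve. One small slip in the description: you have the roles of $\mS_2$ and $\mS_3$ interchanged — in \cref{eq:sums}, $\mS_2$ is the cross term between the indicator and the $\omega_D$-approximation (treated by \cref{lem:incomplete-weil}) and $\mS_3$ is the square of the $\omega_D$-approximation (treated by \cref{lem:gauss} and \cref{lem:large-sieve}) — but since your tool-to-label assignment agrees with the paper, this does not affect the soundness of the plan.
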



\begin{remark}
Error terms of the form $O_\eps(x^{1-\delta})$ are dominated by the right-hand side of \cref{eq:triple-conv-estimate}, and will be available throughout most of our proof. If $x^{2\delta} \le D \le x^\eps$, then the right-hand side of \cref{eq:triple-conv-estimate} becomes $x^{1-\delta} (\log x)^4$, i.e., a power-saving; having an explicit dependence on the conductor bound $D$ is required for the application to smooth-supported multiplicative functions, as in \cite{drappeau2017smooth}.
\end{remark}

\begin{remark}
If one is free to choose the parameters $M$, $N$, $L$ subject only to the constraints in \cref{eq:conditions} and $MNL \asymp x$, then in order to maximize the range $R$, it is optimal to pick (up to $x^{o(1)}$ factors)
\begin{equation} \label{eq:optimal-parameters}
    R \approx x^{(5-4\theta)/(8-6\theta)},
    \qquad\quad
    M \approx \frac{x}{R},
    \qquad\quad 
    N \approx \frac{x}{R},
    \qquad\quad
    L \approx \frac{R^2}{x}.
\end{equation}
This improves on the conditions from Drappeau's triple convolution estimate \cite[(3.2)]{drappeau2015theoremes}, which can handle moduli up to $R \approx x^{3/5}$. 
\end{remark}

\begin{remark}
Although our (conditional) results hit a barrier at $R = x^{5/8-o(1)}$, a more essential limitation of triple convolution estimates proven via the dispersion method lies at $R \le x^{2/3 - o(1)}$, corresponding to the case of three equal parameters $M = N = L \asymp x^{1/3}$ in \cref{eq:optimal-parameters}. Indeed, the diagonal terms in the first Cauchy--Schwarz step require $R < NL$, and already our bounds for the second dispersion sum will use $NL < x^{2/3}$ (moreover, it is natural to Fourier-complete in the largest variable $m$, leaving $NL \le x^{2/3}$). We note again that \cref{thm:triple-conv-estimate} allows for the case of two equal parameters $M = N \approx x/R$, for any $x^{1/2-o(1)} \ll R \ll x^{(5-4\theta)/(8-6\theta)-o(1)}$; this is possible due to Maynard's deamplification argument \cite{maynard2025primes}. In particular, the ranges $M = N = x^{2/5}$, $L = x^{1/5}$, a limiting case in Drappeau's work \cite[Th\'eor\`eme 3]{drappeau2015theoremes}), are now admissible (this is analogous to the infamous case of convolving five sequences of equal sizes).
\end{remark}

Given \cref{thm:triple-conv-estimate} and \cref{lem:harper}, deducing \cref{thm:distribution} is now a routine modification of Drappeau's argument in \cite[Section 3.7]{drappeau2015theoremes} (we follow the same reasoning, using \cref{thm:triple-conv-estimate} instead of \cite[Th\'eor\`eme 3]{drappeau2015theoremes}, and with the choice of parameters in \cref{eq:optimal-parameters}).

\begin{proof}[Proof of \cref{thm:distribution} assuming \cref{thm:triple-conv-estimate}]
Let $\eps > 0$ be sufficiently small, $C$ be the maximum between $\eps^{-1}(1-\eps)^{-1}$ and the constant $C$ given by \cref{lem:harper}, $\delta$ be the minimum between $\eps/100$ and the $\delta$'s of \cref{lem:harper,thm:triple-conv-estimate}, and let $(\log x)^C \le y \le x^{1/C}$, $D := x^{\eps/10}$, $\theta := \theta_{\max}$. It suffices to show (up to a rescaling of $\eps$ at the end) that \cref{eq:exponent} holds for the range of moduli
\[
    r \le x^{\alpha}, \qquad\qquad {\alpha} := \frac{5-4\theta}{8-6\theta} - 1000 \eps,
\]
and we note that error terms of the form $O_\eps(x^{1-\delta})$ are acceptable in \cref{eq:exponent} (up to slightly modifying the value of $\delta$), due to the inequality $x^{1-\delta} \ll \Psi(x, y) y^{-\delta/2}$ (as in \cite[Section 3.7]{drappeau2015theoremes}). We may of course assume that $a_1$ and $a_2$ are relatively prime, by reducing any common factors.

We split the left-hand side of \cref{eq:exponent} into
\[
\begin{aligned}
    &\sum_{\substack{r \le x^{\alpha} \\ (r, a_1 a_2) = 1}} 
    \bigg\vert  \sum_{n \in S(x, y)} \left(\one_{n\bar{a_1}a_2 \equiv 1 \pmod{r}} - \frac{\one_{(n,r)=1}}{\varphi(r)} \right)
    \bigg\vert 
    \\
    &\le 
    \sum_{\substack{r \le x^{\alpha} \\ (r, a_1 a_2) = 1}} 
    \bigg\vert  \sum_{n \in S(x, y)} \mE_D(n\bar{a_1}a_2; r)
    \bigg\vert  
    +
    \sum_{\substack{r \le x^{\alpha} \\ (r, a_1 a_2) = 1}} 
    \bigg\vert  \sum_{n \in S(x, y)} \frac{\one_{(n,r)=1}}{\varphi(r)} \sum_{\substack{\chi \text{ prim.},\ \cond(\chi) \mid r \\ 1 < \cond(\chi) \le D}} \chi(n\bar{a_1}a_2)
    \bigg\vert.
\end{aligned}
\]
The second sum is at most
\[
    \sum_{\substack{r \le x^{\alpha} \\ (r, a_1 a_2) = 1}} \frac{1}{\varphi(r)} \sum_{\substack{\chi \pmod{r} \\ 1 < \cond(\chi) \le D}}
    \bigg\vert  \sum_{n \in S(x, y)} \chi(n)
    \bigg\vert,
\]
which is appropriately bounded by \cref{lem:harper} and the triangle inequality. It remains to bound the first sum. 

Recall that the range $n \in S(x, y)$ means $n \le x$ and $P^+(n) \le y$, where $P^+(n)$ denotes the greatest prime factor of $n$. We bound the contribution of $n \le x^{1-\eps}$ trivially, as in \cite[Section 3.7]{drappeau2015theoremes}:
\[
\begin{aligned}
    \sum_{\substack{r \le x^{\alpha} \\ (r, a_1 a_2) = 1}} 
    \bigg\vert  \sum_{\substack{n \le x^{1-\eps} \\ P^+(n) \le y}} \mE_D(n\bar{a_1}a_2; r)
    \bigg\vert 
    &\le
    \sum_{\substack{r \le x^{\alpha} \\ (r, a_1 a_2) = 1}} 
    \sum_{\substack{n \le x^{1-\eps}}} \left( \one_{a_2n \equiv a_1 \pmod{r}} + \frac{D^2}{\varphi(r)}\right)
    \\
    &\ll_\eps
    x^{\alpha} + \sum_{\substack{n \le x^{1-\eps} \\ a_2 n \neq a_1}} \tau(|a_2n - a_1|)
    +
    x^{\eps/2} \sum_{n \le x^{1-\eps}} 1 \ll_\eps x^{1-\eps/2},
\end{aligned}
\]
and put the other $n$'s into $O(\log x)$ dyadic ranges $n \sim X$, with $x^{1-\eps} \le X \ll x$. We also extend the range of $r$ in these sums to $r \le X^{{\alpha} + 10\eps}$, noting that $x^{\alpha} \le x^{(1-\eps)({\alpha} + 10\eps)}$. Putting $r$ into $O(\log x)$ dyadic ranges $r \sim R$, it remains to bound sums of the form
\begin{equation} \label{eq:sum-r-to-estimate}
    \sum_{\substack{r \sim R \\ (r, a_1 a_2) = 1}} 
    \bigg\vert \sum_{\substack{n \sim X \\ P^+(n) \le y}} \mE_D(n\bar{a_1}a_2; r)
    \bigg\vert,
\end{equation}
for $R \ll X^{{\alpha}+10\eps}$. The contribution of the Bombieri--Vinogradov range $R \ll X^{(1/2) - (\eps/10)}$ is handled by classical methods (e.g., using \cite[Theorem 17.4]{iwaniec2021analytic}; see \cite[Proof of Prop.\,2.4]{drappeau2017smooth}, \cite[Proof of Prop.\,2]{drappeau2015theoremes}). For any $R$ in the remaining range $X^{(1/2) - (\eps/10)} \le R \ll X^{{\alpha}+10\eps}$, we set
\begin{equation} \label{eq:parameters-2}
    M_0 := \frac{X^{1-10\eps}}{R},
    \qquad\quad
    N_0 := \frac{X^{1-10\eps}}{R},
    \qquad\quad
    L_0 := \frac{R^2}{X^{1-20\eps}},
\end{equation}
and factorize smooth numbers as in \cite[Lemme 7]{drappeau2015theoremes} (or \cite{fouvry1996repartition}) to rewrite the sum in \cref{eq:sum-r-to-estimate} as
\[
    \sum_{\substack{r \sim R \\ (r, a_1a_2) = 1}}
    \bigg\vert 
    \sum_{\substack{L_0 < \ell \le L_0 P^-(\ell) \\ P^+(\ell) \le y}}
    \sum_{\substack{M_0 < m \le M_0 P^-(m) \\ P^+(m) \le P^-(\ell)}}
    \sum_{\substack{mn\ell \sim X \\ P^+(n) \le P^-(m)}}
    \mE_D(mn\ell \bar{a_1}a_2; r)
    \bigg\vert,
\]
where $P^-(n)$ denotes the smallest prime factor of $n$.

We also put $m, n, \ell$ into $O((\log y)^3)$ dyadic ranges $m \sim M$, $n \sim N$, $\ell \sim L$, with $M \in [M_0, yM_0]$, $N \in [y^{-2}N_0, N_0]$, $L \in [L_0, yL_0]$, and $MNL \asymp X$. Recalling that $y \le x^{\eps(1-\eps)} \le X^\eps$, it is easily checked that for such $M, N, L$ and $X^{(1/2)-(\eps/10)} \le R \ll X^{{\alpha}+10\eps} = X^{(5-4\theta)/(8-6\theta) - 990\eps}$, and small enough $\eps$, the conditions in \cref{eq:conditions} are satisfied (with respect to $X$ instead of $x$). 

At this point, our sums are almost in the right form to apply the triple convolution estimate in \cref{thm:triple-conv-estimate}, except for a few joint constraints on the variables $m, n, \ell$ (these are $P^+(m) \le P^-(\ell)$, $P^+(n) \le P^-(m)$, respectively $mn\ell \sim X$). The last step of analytically separating these constraints is identical to that in \cite[Section 3.7]{drappeau2015theoremes}, except that in the end we apply \cref{thm:triple-conv-estimate} instead of \cite[Th\'eor\`eme 3]{drappeau2015theoremes}. Overall, the contribution of the range $X^{(1/2)-(\eps/10)} \le R \ll X^{{\alpha} + 10\eps}$ is $O_\eps\left( (\log x)^{O(1)} x^{1-\delta} \right)$, which is acceptable; this completes our proof.
\end{proof}

We only briefly note that the result for smooth-supported multiplicative functions in \cref{thm:distribution-functions} follows by an analogous modification to the arguments in \cite{drappeau2017smooth}, using the parameters in \cref{eq:parameters-2}, and \cref{thm:triple-conv-estimate} instead of \cite[Lemma 2.3]{drappeau2017smooth}. The main additional difficulty in \cite{drappeau2017smooth} lies in the contribution of the small-conductor characters, since \cref{lem:harper} is no longer applicable; as a replacement, Drappeau, Granville and Shao developed a large sieve inequality for smooth-supported sequences \cite[Theorem 5.1]{drappeau2017smooth}. (We also point the reader to the follow-up work of Shparlinski in \cite{shparlinski2018character}.)

\section{Dispersion and deamplification} \label{sec:setup}
Our goal for the rest of this paper is to prove \cref{thm:triple-conv-estimate}, proceeding by Linnik's dispersion method. For the reader following the outline in \cref{subsec:first-steps}, the exponential sum from \cref{eq:sketch-main-exponential} will ultimately arise in the first dispersion sum, after Poisson summation (see \cref{prop:simplification}).

Assume the setup of \cref{thm:triple-conv-estimate}. We may take $x$ larger than an absolute constant, since the conclusion of \cref{thm:triple-conv-estimate} is trivial otherwise, and $(\alpha_m)$, $(\beta_n)$, $(\gamma_\ell)$ to be supported on $m \sim M$, $n \sim N$, $\ell \sim L$, without loss of generality.
We first combine the sequences $\beta_n$ and $\gamma_\ell$ into one sequence
\begin{equation} \label{eq:u-def}
    u_k := \sum_{n\ell = k} \beta_n \gamma_\ell,
\end{equation}
supported in $(K, 4K]$ where $K := NL$, $|u_k| \le \tau(k) \ll_\eps x^{\eps/2}$, and $\sum_k |u_k| \ll K$. Denoting the left-hand side of \cref{eq:triple-conv-estimate} by $\Delta = \Delta_D(M,N,L,R)$, we can introduce coefficients $\rho_r$ of absolute value $1$, supported in $(R, 2R]$, to rewrite
\[
    \Delta
    =
    \sum_{(r,a_1a_2)=1} \rho_r \sum_{(m, r) = 1} \alpha_m \sum_{(k, r) = 1} u_k \left(\one_{mk \equiv a_1 \bar{a_2} \pmod{r}} - \frac{1}{\varphi(r)} \omega_D(mk \bar{a_1}a_2; r) \right),
\]
where we recall that $\omega_D$ was defined in \cref{eq:omega-d}. Normally, at this point we would apply Cauchy--Schwarz in the $r, m$ variables, but we first perform a ``deamplification'' step (following Maynard \cite{maynard2025primes} with minor modifications), as anticipated in \cref{subsec:completing-argument}. The idea is to split the inner sum according to the residue class of $k \pmod{re}$ for some $e \ge 1$, and then to average over a convenient set of $e \sim E$; this artificially introduces a new parameter $E$ (to be chosen later), which will help reduce the contribution of a certain diagonal sum by a small power of $x$ (at the expense of increasing the corresponding off-diagonal terms, which already had a power-saving bound). For now, we require that
\begin{equation} \label{eq:E-conditions}
    x^{4\eps} \le E \ll x^{-\eps} \frac{K}{R},
\end{equation}
which is compatible with \cref{eq:conditions} since $R \ll x^{-5\eps} NL$. For multiple reasons of convenience throughout our proof, we will actually average over the set
\begin{equation} \label{eq:averaging-set}
    \msE := \{e \sim E : e \text{ prime}\}.
\end{equation}

\begin{proposition}[Dispersion setup with deamplification] \label{prop:setup}
Let $\Phi$ be a smooth function satisfying
\begin{equation} \label{eq:phi-smooth}
    \one_{[1, 2]} \le \Phi \le \one_{(0.5, 3)},
\end{equation}
and $\|\Phi^{(j)}\|_\infty \ll_j 1$ for $j \ge 0$. Then for any $\eps > 0$ and $1 \gg \delta > 0$, under the parameter conditions in \cref{eq:conditions,eq:E-conditions}, one has
\begin{equation} \label{eq:setup}
    \Delta \ll_\eps x\sqrt{\frac{R \log x}{K^2} (\mS_1 - 2\Re \mS_2 + \mS_3)}
    + x^{1-\eps},
\end{equation}
where
\begin{equation} \label{eq:disp-sums}
\begin{aligned}
    \mS_1 &:= \sum_{e \in \msE} \sum_{(r,a_1a_2)=1} \Phi\left(\frac{r}{R}\right) \sum_{(m,r)=1} \frac{1}{M} \Phi\left(\frac{m}{M}\right) \sum_{\substack{(k_1k_2,re)=1 \\ k_1 \equiv k_2 \pmod{re} \\ k_i \equiv a_1\bar{a_2m} \pmod{r}}} u_{k_1} \bar{u_{k_2}},
    \\
    \mS_2 &:= \sum_{e \in \msE} \sum_{(r,a_1a_2)=1} \Phi\left(\frac{r}{R}\right) \frac{1}{\varphi(re)}  \sum_{\substack{(m,r)=1}} \frac{1}{M} \Phi\left(\frac{m}{M}\right)  \sum_{\substack{(k_1k_2,re)=1 \\ k_2 \equiv a_1\bar{a_2m} \pmod{r}}} u_{k_1} \omega_D(mk_1 \bar{a_1}a_2; r)\bar{u_{k_2}},
    \\
    \mS_3 &:= \sum_{e \in \msE} \sum_{ (r,a_1a_2)=1} \Phi\left(\frac{r}{R}\right) \frac{1}{\varphi(r)\varphi(re)} \sum_{(m,r)=1} \frac{1}{M} \Phi\left(\frac{m}{M}\right) \left\vert \sum_{(k,re)=1} u_k \omega_D(mk \bar{a_1}a_2; r)\right\vert^2.
\end{aligned}
\end{equation}
\end{proposition}

\begin{proof}
For a fixed prime $e \sim E$, we wish to eliminate the contribution to $\Delta$ of the terms $k$ with $(e, k) \neq 1$, i.e., $e \mid k$. This contribution is
\[
\begin{aligned}
    &\sum_{(r,a_1a_2)=1} \rho_r \sum_{(m, r) = 1} \alpha_m \sum_{\substack{k \in (K, 4K] \\ (k, r) = 1, e \mid k}} u_{k} \left(\one_{mk \equiv a_1 \bar{a_2} \pmod{r}} - \frac{1}{\varphi(r)} \omega_D(mk \bar{a_1}a_2; r) \right)
    \\
    &\ll_\eps 
    \sum_{\substack{r \sim R \\ (r,a_1a_2e)=1}} \sum_{\substack{s \in (MK, 8MK] \\ e \mid s}} x^{\eps/2} \left(\one_{s \equiv a_1 \bar{a_2} \pmod{r}} + \frac{1}{\varphi(r)} D^2 \right)
    \\
    &\ll_\eps
    x^{\eps} \sum_{r \sim R} \left(\frac{MK}{RE} + 1 + \left(\frac{MK}{E} + 1\right) \frac{D^2}{R}\right)
    \quad 
    \\
    &\ll
    x^{\eps} \left(\frac{xD^2}{E} + R + D^2\right)
    \qquad \ll 
    x^{\eps} \left(\frac{x^{1+2\eps}}{x^{4\eps}} + R + x^{2\eps}\right),
\end{aligned}
\]
which is $\ll x^{1-\eps}$ since $R \ll x^{(2/3)-11\eps}$ by \cref{eq:conditions}. It follows that for any $e \sim E$,
\[
    \Delta = \sum_{(r,a_1a_2)=1} \rho_r \sum_{(m,r)=1} \alpha_m \sum_{(k,re)=1} u_k \left(\one_{mk \equiv a_1 \bar{a_2} \pmod{r}} - \frac{1}{\varphi(r)} \omega_D(mk \bar{a_1}a_2; r) \right) + O_\eps(x^{1-\eps}).
\]
Now for fixed $m$ and $e$ we have
\[
    \sum_{(k,re)=1} u_k \one_{mk \equiv a_1 \bar{a_2} \pmod{r}}
    =
    \sum_{\substack{b \in (\Z/re\Z)^\times \\ b \equiv a_1\bar{a_2 m} \pmod{r}}}
    \sum_{(k,re)=1} u_k \one_{k \equiv b \pmod{re}},
\]
and there are precisely $\varphi(re)/\varphi(r)$ choices of $b$ in the summation; thus
\[
\begin{aligned}
    \Delta &= \sum_{(r,a_1a_2)=1} \rho_r \sum_{(m,r)=1} \alpha_m \sum_{\substack{b \in (\Z/re\Z)^\times \\ b \equiv a_1\bar{a_2 m} \pmod{r}}} \sum_{(k,re)=1} u_k \left(\one_{k \equiv b \pmod{re}} - \frac{\omega_D(mk \bar{a_1}a_2; r)}{\varphi(re)} \right) 
    \\
    &+ O_\eps(x^{1-\eps}).
\end{aligned}
\]
(For a complete deamplification setup, one could also try to split the term $\omega_D(mk\bar{a_1}a_2; r)$ according to the residue $b$ of $k \pmod{re}$, but we do not need to do this in our proof.)

We then average over $e$ in the set $\mE$ from \cref{eq:averaging-set},
which has size $|\msE| \asymp_\eps E/\log E$ (recalling that $E \gg x^{\eps}$ and $|a_2| \le x^\delta$). Thus up to an error of $O_\eps(x^{1-\eps})$, we can rewrite $\Delta$ as
\[
    \frac{1}{|\msE|} \sum_{e \in \msE} \sum_{(r,a_1a_2)=1} \rho_r \sum_{(m,r)=1} \alpha_m \sum_{\substack{b \in (\Z/re\Z)^\times \\ b \equiv a_1\bar{a_2 m} \pmod{r}}} \sum_{(k,re)=1} u_k \left(\one_{k \equiv b \pmod{re}} - \frac{\omega_D(mk \bar{a_1}a_2; r)}{\varphi(re)} \right).
\]
We now apply Cauchy--Schwarz in the $e,r,m,b$ variables, allowing us to eliminate the $\rho_r$ and $\alpha_m$ coefficients; using that $\varphi(re) \le \varphi(r) e$, this gives
\begin{equation} \label{eq:delta-squared}
\begin{aligned}
    |\Delta|^2 &\ll 
    \frac{1}{|\msE|^2} \left( 
    \sum_{e \in \msE} \sum_{(r,a_1a_2)=1} \sum_{(m,r)=1}  \sum_{\substack{b \in (\Z/re\Z)^\times \\ b \equiv a_1\bar{a_2 m} \pmod{r}}} |\rho_r|^2 |\alpha_m|^2
    \right) \Delta' 
    +
    O_\eps(x^{2(1-\eps)})
    \\ 
    &\ll
    \frac{1}{|\msE|^2} \left( 
    \sum_{e \in \msE} \sum_{\substack{r \sim R \\ (r,a_1a_2)=1}} \sum_{\substack{m \sim M \\ (m,r)=1}}  \frac{\varphi(re)}{\varphi(r)}
    \right) \Delta' 
    +
    O_\eps(x^{2(1-\eps)})
    \\
    &\ll_\eps 
    MR (\log E) \Delta' + x^{2(1-\eps)},
\end{aligned}
\end{equation}
where
\[
\begin{aligned}
    \Delta' &:= \sum_{e \in \msE} \sum_{\substack{r \sim R \\ (r,a_1a_2)=1}} \sum_{\substack{m \sim M \\ (m,r)=1}} \sum_{\substack{b \in (\Z/re\Z)^\times \\ b \equiv a_1\bar{a_2 m} \pmod{r}}} \left\vert \sum_{(k,re)=1} u_k \left(\one_{k \equiv b \pmod{re}} - \frac{\omega_D(mk \bar{a_1}a_2; r)}{\varphi(re)} \right)\right\vert^2.
\end{aligned}
\]
Anticipating a later application of Poisson summation, we bound the indicator functions $\one_{m \sim M}$ and $\one_{r \sim R}$ from above by $\Phi(m/M)$ and $\Phi(r/R)$. Then we expand the square and perform the $b$-summation to obtain
\begin{equation} \label{eq:delta-prime}
\begin{aligned}
    &\Delta' 
    \\
    &\le \sum_{e \in \msE} \sum_{\substack{(r,a_1a_2)=1}} \Phi\left(\frac{r}{R}\right) \sum_{\substack{(m,r)=1}} \Phi\left(\frac{m}{M}\right) \sum_{\substack{b \in (\Z/re\Z)^\times \\ b \equiv a_1\bar{a_2 m} \pmod{r}}} \left\vert \sum_{(k,re)=1} u_k \left(\one_{k \equiv b\ (re)} - \frac{\omega_D(mk \bar{a_1}a_2; r)}{\varphi(re)} \right)\right\vert^2
    \\
    &= M \left(\mS_1 - 2\Re \mS_2 + \mS_3\right).
\end{aligned}
\end{equation}
Combining \cref{eq:delta-squared} with \cref{eq:delta-prime} and recalling that $M \asymp x/K$, we recover \cref{eq:setup}.
\end{proof}

Since the error term of $O_\eps(x^{1-\eps})$ in \cref{prop:setup} is admissible for \cref{thm:triple-conv-estimate}, it remains to estimate the dispersion sums $\mS_1, \mS_2, \mS_3$.

\section{The main terms}
We note that except for the coefficients $\Phi\left(\frac{m}{M}\right)$, only the residue of $m$ modulo $r$ matters in the inner summations from \cref{eq:disp-sums}. Thus if we define
\begin{equation} \label{eq:Poisson-difference}
    \mE_r(c) := \left(\frac{1}{M} \sum_{m \equiv c \pmod{r}} \Phi\left(\frac{m}{M}\right) \right) - \frac{\hat{\Phi}(0)}{r},
\end{equation}
which can be estimated via the truncated Poisson summation in \cref{lem:poisson}, we can rewrite
\begin{equation} \label{eq:disp-sums-2}
\begin{aligned}
    \mS_1 &= \hat{\Phi}(0) X_1 + \sum_{e \in \msE} \sum_{(r,a_1a_2)=1} \Phi\left(\frac{r}{R}\right) \sum_{c \in (\Z/r\Z)^\times} \mE_r(c) \sum_{\substack{(k_1k_2,re)=1 \\ k_1 \equiv k_2 \pmod{re} \\ k_i \equiv a_1\bar{a_2c} \pmod{r}}} u_{k_1} \bar{u_{k_2}},
    \\
    \mS_2 &= \hat{\Phi}(0) X_2 + \sum_{e \in \msE} \sum_{(r,a_1a_2)=1} \Phi\left(\frac{r}{R}\right) \frac{1}{\varphi(re)}  \sum_{c \in (\Z/r\Z)^\times} \mE_r(c) \sum_{\substack{(k_1k_2,re)=1 \\ k_2 \equiv a_1\bar{a_2c} \pmod{r}}} u_{k_1} \omega_D(k_1 \bar{a_1}a_2c; r)\bar{u_{k_2}},
    \\
    \mS_3 &= \hat{\Phi}(0) X_3 + \sum_{e \in \msE} \sum_{(r,a_1a_2)=1} \Phi\left(\frac{r}{R}\right) \frac{1}{\varphi(r)\varphi(re)}  \sum_{c \in (\Z/r\Z)^\times} \mE_r(c) \left\vert \sum_{(k,re)=1} u_k \omega_D(k \bar{a_1}a_2c; r)\right\vert^2,
\end{aligned}
\end{equation}
where
\begin{equation} \label{eq:main-terms}
\begin{aligned}
    X_1 &:= \sum_{e \in \msE} \sum_{(r,a_1a_2)=1} \Phi\left(\frac{r}{R}\right) \frac{1}{r} \sum_{\substack{(k_1k_2,re)=1 \\ k_1 \equiv k_2 \pmod{re}}} u_{k_1} \bar{u_{k_2}},
    \\
    X_2 &:= \sum_{e \in \msE} \sum_{(r,a_1a_2)=1} \Phi\left(\frac{r}{R}\right)  \frac{1}{r\varphi(re)} \sum_{\substack{(k_1k_2,re)=1}} u_{k_1} \bar{u_{k_2}} \omega_D(k_1 \bar{k_2}; r),
    \\
    X_3 &:= \sum_{e \in \msE} \sum_{(r,a_1a_2)=1} \Phi\left(\frac{r}{R}\right) \frac{1}{r\varphi(r)\varphi(re)}  \sum_{c \in (\Z/r\Z)^\times} \left\vert \sum_{(k,re)=1} u_k \omega_D(kc; r)\right\vert^2.
\end{aligned}
\end{equation}
Intuitively, these main terms reflect what would happen if in the summations from \cref{eq:disp-sums}, the variable $m$ (weighted by $\Phi\left(\frac{m}{M}\right)$) were uniformly distributed modulo $r$. Thus for $j \in \{1, 2, 3\}$, $\hat{\Phi}(0)X_j$ is essentially the best approximation to $\mS_j$ which does not depend on $M$. We now bound the contribution to \cref{eq:setup} of $X_1 - 2\Re X_2 + X_3$, using the multiplicative large sieve as in \cite{drappeau2015theoremes,drappeau2017smooth}.

\begin{proposition}[Contribution of main terms] \label{prop:main-terms}
With the notation above, one has
\[
    0 \le X_1 - 2\Re X_2 + X_3 \ll \frac{K^2}{RD} (\log x)^6,
\]
under the parameter conditions in \cref{eq:conditions,eq:E-conditions}.
\end{proposition}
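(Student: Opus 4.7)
The strategy is to use character orthogonality to reduce $X_1 - 2\Re X_2 + X_3$ to a weighted sum of squares of character sums $|U_{re}(\chi)|^2 := |\sum_{(k, re) = 1} u_k \chi(k)|^2$, then apply the multiplicative large sieve (\cref{lem:large-sieve}).

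First, I would show $X_2 = X_3$. Expanding the square in $X_3$ and summing over $c \in (\Z/r\Z)^\times$, the inner sum $\sum_c \omega_D(k_1 c; r)\, \overline{\omega_D(k_2 c; r)}$ collapses via the orthogonality relation $\sum_c \chi_1(c)\, \bar{\chi_2}(c) = \varphi(r) \one_{\chi_1 = \chi_2}$ (for characters of conductor dividing $r$) to $\varphi(r) \omega_D(k_1 \bar{k_2}; r)$; after simplification, $X_3$ matches $X_2$, so the combination reduces to $X_1 - X_3$. Applying orthogonality modulo $re$ to the inner sum of $X_1$ and parametrizing characters mod $re$ by primitive $\chi^*$ with $\cond(\chi^*) \mid re$, one obtains $X_1 - X_3 = \sum_{e, r} \Phi(r/R) \tfrac{1}{r \varphi(re)} \sum_{\chi^*} |U_{re}(\chi^*)|^2$, where $\chi^*$ runs over primitive characters with $\cond(\chi^*) \mid re$ excluding those with $\cond(\chi^*) \le D$ and $\cond(\chi^*) \mid r$. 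This representation makes the nonnegativity $X_1 - X_3 \ge 0$ immediate.

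Since $E \ge x^{4\eps} > D$ by \cref{eq:E-conditions}, any primitive character with $\cond(\chi^*) \mid re$ but $\cond(\chi^*) \nmid r$ automatically has $\cond(\chi^*) \ge e > D$, so the relevant $\chi^*$ are precisely the primitive characters with $D < \cond(\chi^*) \mid re$. I would split them into case (a), $\cond(\chi^*) \mid r$ with $D < \cond(\chi^*) \le 2R$, and case (b), $\cond(\chi^*) = ef$ with $f \mid r$ (the negligible contribution from pairs with $(r, e) > 1$ being handled trivially via the rarity of $e \mid r$). For a fixed primitive character of conductor $Q$, the total weight $\sum_{r \sim R,\ \text{divisibility}} \Phi(r/R)/(r \varphi(re))$ evaluates to $\asymp \log\log x / (RQ)$. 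Applying \cref{lem:large-sieve} dyadically in $Q \sim Q_0$, together with the trivial bound $\sum_k |u_k|^2 \ll x^\eps K$, yields $\sum_{Q \sim Q_0} \sum_{\chi^* \text{ primitive mod } Q} |U_{re}(\chi^*)|^2 \ll x^\eps K (K + Q_0^2)$ per fixed $(r, e)$. In case (a), summing dyadically over $Q_0 \in (D, 2R]$ produces the desired contribution of order $K^2/(RD)$ from the $K$-term in the large sieve bound, with the $Q_0^2$-term absorbed via the constraint $K \gg R x^{5\eps}$. Case (b) is treated analogously, exploiting the lower bound $Q \ge E$ together with the deamplification inequality $E \ll x^{-\eps} K/R$ (which forces $KE$ and $K/(RE)$ to be small relative to the target).

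The main technical obstacle is ensuring a sufficiently tight bound in case (b), where conductors can reach $\asymp RE$ and the naive large sieve becomes suboptimal: one must carefully balance the $1/(RQ)$ weight, the dyadic sum over $Q \ge E$, and the summation over $e \in \msE$ (using $|\msE| \asymp E/\log E$ and $\sum_{e \in \msE} 1/e \ll 1/\log E$). A further minor subtlety is the coprimality condition $(k, re) = 1$ appearing in $U_{re}(\chi^*)$, which is best handled by applying the large sieve to the restricted sequence $\tilde u_k := u_k \one_{(k, re) = 1}$ for each fixed $(r, e)$, or equivalently by M\"obius expansion; all resulting polylogarithmic factors (from dyadic splitting, $\varphi$-weight lower bounds, and the $e$-sum) are absorbed into the $(\log x)^6$ in the final bound.
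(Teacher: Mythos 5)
Your proposal takes essentially the same route as the paper: reduce $X_1 - 2\Re X_2 + X_3$ to a manifestly nonnegative sum over primitive characters of conductor $> D$ (or conductor $\mid re$ but $\nmid r$), then apply the multiplicative large sieve (\cref{lem:large-sieve}) and conclude using $RE \ll x^{-\eps}K$ together with $D \le x^\eps$. Your observation that $X_2 = X_3$ (which follows by opening the square in $X_3$ and using orthogonality of the characters $\chi \pmod{r}$, so that $\sum_c \omega_D(k_1 c;r)\bar{\omega_D(k_2 c;r)} = \varphi(r)\,\omega_D(k_1\bar{k_2};r)$) is a clean variant of the paper's complete-the-square manipulation; both routes lead to the same identity
\[
    X_1 - 2\Re X_2 + X_3 \;=\; \sum_{e \in \msE}\sum_{(r,a_1a_2)=1}\Phi\!\left(\tfrac{r}{R}\right)\frac{1}{r\varphi(re)}\sum_{\chi^* \in S(r,e,D)}\Bigl\vert\sum_{(k,re)=1} u_k\,\chi^*(k)\Bigr\vert^2 ,
\]
making nonnegativity immediate. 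Splitting conductors into $\cond \mid r$ versus $\cond = ef$ with $f\mid r$ is a workable alternative to the paper's substitution $q := re$ (which lumps everything together and absorbs the overcounting into $\tau(q) = O(\log q)$ factors).

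Two points need tightening. First, the advertised bound $\sum_k |u_k|^2 \ll x^\eps K$ is too lossy: carried through, it yields $X_1 - 2\Re X_2 + X_3 \ll x^\eps K^2/(RD)$, and an $x^\eps$ factor in place of $(\log x)^6$ breaks the downstream deduction of \cref{thm:triple-conv-estimate} (the final $\Delta \ll x\sqrt{(\log x)^7(D^{-1}+x^{-2\delta})}$ step would fail). You must use the sharper $\sum_{k\le 4K}|u_k|^2 \le \sum_{k\le 4K}\tau(k)^2 \ll K(\log K)^3$, as the paper does. Second, the phrase ``the negligible contribution from pairs with $(r,e)>1$ being handled trivially'' is misleading: a truly trivial pointwise bound $|U_{re}(\chi^*)|^2 \ll K^2 x^{o(1)}$ on those pairs gives a total of order $K^2 x^{o(1)}$, which is \emph{much} larger than the target $K^2(\log x)^6/(RD)$. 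What actually saves the day is that your split (a)/(b) already \emph{covers} all $\cond \mid re$ even when $e\mid r$ (with at most double-counting on the overlap), so no separate treatment is needed at all; but the large sieve must still be applied there. Your remark about ``$KE$ and $K/(RE)$ being small relative to the target'' in case (b) also reads oddly --- the relevant inequalities are $K/E \ll K/D$ (since $E > D$) and $RE \ll K/D$, as in the paper.
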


\begin{proof}
In analogy with \cref{eq:delta-prime}, we can write
\begin{equation} \label{eq:main-terms-expansion}
\begin{aligned}
    &X_1 - 2\Re X_2 + X_3 
    \\
    &=
    \sum_{e \in \msE} \sum_{(r, a_1a_2) = 1}  \Phi\left(\frac{r}{R}\right) \frac{1}{r} \sum_{c \in (\Z/r\Z)^\times} \sum_{\substack{b \in (\Z/re\Z)^\times \\ b \equiv c \pmod{r}}} \left\vert \sum_{(k,re)=1} u_k \left(\one_{k \equiv b \pmod{re}} - \frac{\omega_D(k\bar{c}; r)}{\varphi(re)} \right)\right\vert^2
    \\
    &\ll
    \frac{1}{R} \sum_{e \in \msE} \sum_{r} \Phi\left(\frac{r}{R}\right) \sum_{\substack{b \in (\Z/re\Z)^\times}} \left\vert \sum_{(k,re)=1} u_k \left(\one_{k \equiv b \pmod{re}} - \frac{\omega_D(k\bar{b}; r)}{\varphi(re)} \right)\right\vert^2
    \\
    &=
    \frac{1}{R} \sum_{e \in \msE} \sum_r \Phi\left(\frac{r}{R}\right) \frac{1}{\varphi(re)^2} \sum_{\substack{b \in (\Z/re\Z)^\times}} \left\vert \sum_{(k,re)=1} u_k \left(\omega_\infty(k\bar{b}; re) - \omega_D(k\bar{b}; r) \right)\right\vert^2,
\end{aligned}
\end{equation}
where the first equality shows that $X_1 - 2\Re X_2 + X_3 \ge 0$.
Note that
\[
\begin{aligned}
    \omega_\infty(k\bar{b}; re) - \omega_D(k\bar{b}; r)
    &=
    \left( \omega_\infty(k\bar{b}; re) - \omega_\infty(k\bar{b}; r) \right) + \left( \omega_\infty(k\bar{b}; r) - \omega_D(k\bar{b}; r) \right)
    \\
    &=
    \sum_{\substack{\chi \in S}} \chi(k)\bar{\chi(b)},
\end{aligned}
\]
where $S = S(r,e,\eps) = S_1 \cup S_2$ and
\[
\begin{aligned}
    S_1 &:= \left\{ \chi \text{ primitive} : \cond(\chi) \nmid r, \text{ but } \cond(\chi) \mid re \right\},
    \\
    S_2 &:=
    \left\{ \chi \text{ primitive} : \cond(\chi) > D \text{ and } \cond(\chi) \mid r \right\}.
\end{aligned}
\]
Since all the characters in $S$ are primitive, any distinct $\chi_1, \chi_2 \in S$ must induce different characters modulo $re$. Thus $\chi_1 \bar{\chi_2} \one_{(re, \cdot) = 1}$ is not the principal character modulo $re$, so it must have average $0$. But then
\begin{equation} \label{eq:inside-main-terms}
\begin{aligned}
    &\sum_{\substack{b \in (\Z/re\Z)^\times}} \left\vert \sum_{(k,re)=1} u_k \left(\omega_\infty(k\bar{b}; re) - \omega_D(k\bar{b}; r) \right)\right\vert^2
    \\
    &=
    \sum_{\substack{b \in (\Z/re\Z)^\times}} \left\vert \sum_{\chi \in S} \bar{\chi}(b) \sum_{(k,re)=1} u_k \chi(k) \right\vert^2
    \\
    &=
    \sum_{\chi_1,\chi_2 \in S} \left(\sum_{(k,re)=1} u_k \chi_1(k)\right)\bar{\left(\sum_{(k,re)=1} u_k \chi_2(k)\right)} \sum_{\substack{b \in (\Z/re\Z)^\times}} \bar{\chi_1}(b) \chi_2(b) 
    \\
    &=
    \varphi(re) \sum_{\chi \in S} \left\vert \sum_{(k,re)=1} u_k \chi(k)\right\vert^2.
\end{aligned}    
\end{equation}
From \cref{eq:main-terms-expansion}, \cref{eq:inside-main-terms}, and the fact that all characters $\chi \in S_1$ also have $\cond(\chi) > D$ (due to $D \le x^\eps < E \le e \mid \cond(\chi)$), we conclude that
\[
\begin{aligned}
    X_1 - 2\Re X_2 + X_3 
    \ &\ll  
    \frac{1}{R}\sum_{e \in \msE} \sum_r \Phi\left(\frac{r}{R}\right) \frac{1}{\varphi(re)}  \sum_{\chi \in S} \left\vert \sum_{(k,re)=1} u_k \chi(k)\right\vert^2
    \\
    &\le
    \frac{1}{R}\sum_{e \in \msE} \sum_r \Phi\left(\frac{r}{R}\right) \frac{1}{\varphi(re)}  \sum_{\substack{\chi \text{ primitive} \\ \cond(\chi) > D \\ \cond(\chi) \mid re}} \left\vert \sum_{(k,re)=1} u_k \chi(k)\right\vert^2.
\end{aligned}
\]
Now letting $Q := RE$, substituting $q$ for $re$, using that $q$ has $O(\log q)$ different prime factors, and decomposing $\one_{(a, b) = 1} = \sum_{d \mid (a, b)} \mu(d)$ to get rid of the coprimality restriction, we can bound the sum above by 
\[
\begin{aligned}
    X_1 - 2\Re X_2 + X_3 
    &\ll
    \frac{\log x}{R}\sum_{Q/2 \le q \le 6Q} \frac{1}{\varphi(q)} \sum_{\substack{\chi \text{ primitive} \\ \cond(\chi) > D \\ \cond(\chi) \mid q}} \left\vert \sum_{(k,q)=1} u_k \chi(k)\right\vert^2
    \\
    &\le
    \frac{\log x}{R}
    \sum_{D < s \le 6Q}
    \sum_{\substack{\chi \text{ primitive} \\ \cond(\chi) = s}}\,
    \sum_{\substack{q \le 6Q \\ s \mid q}}
    \frac{1}{\varphi(q)} \left\vert \sum_{(k,q/s)=1} u_k \chi(k)\right\vert^2
    \\
    &\le 
    \frac{\log x}{R}
    \sum_{D < s \le 6Q}
    \sum_{\substack{\chi \text{ primitive} \\ \cond(\chi) = s}}\,
    \sum_{\substack{q \le 6Q \\ s \mid q}}
    \frac{\tau(q/s)}{\varphi(q)} 
    \sum_{d \mid q/s} \left\vert \sum_{k'} u_{dk'} \chi(dk')\right\vert^2
    \\
    &\le
    \frac{\log x}{R}
    \sum_{d \le 6Q}
    \sum_{D < s \le 6Q}
    \sum_{\substack{\chi \text{ primitive} \\ \cond(\chi) = s}}\,
    \left\vert \sum_{k'} u_{dk'} \chi(k')\right\vert^2 
    \sum_{\substack{q \le 6Q \\ ds \mid q}}
    \frac{\tau(q/s)}{\varphi(q)}.
\end{aligned}
\]
Noting that
\[
    \sum_{\substack{q \le 6Q \\ sd \mid q}}
    \frac{\tau(q/s)}{\varphi(q)}
    \le
    \sum_{\substack{q' \le 6Q}}
    \frac{\tau(q'd)}{\varphi(q'ds)}
    \le 
    \frac{\tau(d)}{\varphi(d)\varphi(s)}
    \sum_{\substack{q' \le 6Q}}
    \frac{\tau(q')}{\varphi(q')}
    \ll 
    \frac{\tau(d)}{\varphi(d)\varphi(s)} (\log x)^2,
\]
we further have
\[
\begin{aligned}
    X_1 - 2\Re X_2 + X_3 
    &\ll 
    \frac{(\log x)^3}{R}
    \sum_{d \le 6Q}
    \frac{\tau(d)}{\varphi(d)}
    \sum_{D < s \le 6Q}
    \frac{1}{\varphi(s)}
    \sum_{\substack{\chi \text{ primitive} \\ \cond(\chi) = s}}
    \left\vert \sum_{k'} u_{dk'} \chi(k')\right\vert^2 
    \\
    &=
    \frac{(\log x)^3}{R}
    \sum_{d \le 6Q}
    \frac{\tau(d)}{\varphi(d)}
    \int_D^\infty
    \sum_{D < s \le \min(6Q, t)}
    \frac{s}{\varphi(s)}
    \sum_{\substack{\chi \text{ primitive} \\ \cond(\chi) = s}}
    \left\vert \sum_{k'} u_{dk'} \chi(k')\right\vert^2 \frac{dt}{t^2}.
\end{aligned} 
\]
Finally, applying the multiplicative large sieve from \cref{lem:large-sieve} as in \cite[(2.6)]{drappeau2017smooth}, we conclude that
\[ 
\begin{aligned}
    X_1 - 2 \Re X_2 + X_3
    &\ll 
    \frac{(\log x)^3}{R} \sum_{d \le 6Q} \frac{\tau(d)}{\varphi(d)} \int_D^\infty \left(\frac{K}{d} + \min(6Q, t)^2 \right) \sum_{k' \asymp K/d} |u_{dk'}|^2\ \frac{dt}{t^2}
    \\
    &\ll 
    \frac{(\log x)^3}{R} K (\log K)^3 \sum_{d \le 6Q} \frac{\tau(d)^3}{d\varphi(d)} \left( \frac{K}{dD} + Q \right)
    \\
    &\ll
    (\log x)^6 \frac{K}{R} \left(\frac{K}{D} + Q\right).
\end{aligned}
\]
Using the condition $RE \ll x^{-\eps}K$ from \cref{eq:E-conditions} to bound $Q = RE \ll K/D$, we conclude that
\[
    X_1 - 2 \Re X_2 + X_3 \ll 
    (\log x)^6 K^2 (RD)^{-1},
\] 
as we wanted.
\end{proof}

To bridge \cref{prop:setup,prop:main-terms}, it remains to compare the dispersion sums $\mS_j$ to their main terms $\hat{\Phi}(0) X_j$; we make the following claim.

\begin{proposition}[Estimates for dispersion sums] \label{prop:dispersion-sums}
For all sufficiently small $\eps > 0$, there exists $\delta > 0$ such that with the notation in \cref{eq:disp-sums-2}, the following hold:
\begin{itemize}
\item[(i).] Assuming the ranges in \cref{eq:conditions}, there exists a choice of $E$ satisfying \cref{eq:E-conditions} such that 
\begin{equation} \label{eq:S1-estimate}
    \mS_1 - \hat{\Phi}(0)X_1 \ll_\eps x^{-2\delta} \frac{K^2}{R}.
\end{equation}
\item[(ii).] Assuming both \cref{eq:conditions,eq:E-conditions}, one has
\begin{align} \label{eq:S2-estimate}
    \mS_2 - \hat{\Phi}(0)X_2 &\ll_\eps x^{-2\delta} \frac{K^2}{R}, \\
    \label{eq:S3-estimate}
    \mS_3 - \hat{\Phi}(0)X_3 &\ll_\eps x^{-2\delta} \frac{K^2}{R}.
\end{align}
\end{itemize}
\end{proposition}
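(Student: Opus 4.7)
The plan is to treat the three cases $j = 1, 2, 3$ separately, after a common first step of applying \cref{lem:poisson} to the error $\mE_r(c)$ defined in \cref{eq:Poisson-difference}. Since $\mE_r(c)$ is precisely the deviation of $M^{-1}\sum_{m \equiv c \pmod{r}}\Phi(m/M)$ from $\hat{\Phi}(0)/r$, truncated Poisson summation rewrites it (up to a negligible error) as a short sum over nonzero frequencies $1 \le |h| \le H \asymp x^\eps R/M$, producing exponentials $\e(hc/r)$ weighted by $\hat{\Phi}(hM/r)/r$. Substituting into \cref{eq:sums-2}, each difference $\mS_j - \hat{\Phi}(0)X_j$ becomes an exponential sum which I aim to bound by $x^{-2\delta}K^2/R$.

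For the easier bounds \cref{eq:S2-estimate} and \cref{eq:S3-estimate}, I would exchange the $c$-summation with a character expansion of $\omega_D$, and handle the resulting inner sum over $c$ via \cref{lem:gauss}. Concretely, for $\mS_3$ one expands
\[
    |\omega_D(k\bar{a_1}a_2 c; r)|^2 = \sum_{\chi_1, \chi_2 \in \mX_D,\ \cond(\chi_i) \mid r} \chi_1 \bar{\chi_2}(k\bar{a_1}a_2 c),
\]
so that after the Poisson step the $c$-sum reduces to Gauss sums of conductor at most $D^2 \le x^{2\eps}$, combined with averages of $\sum_k u_k \chi(k)$ controlled via $\sum_k |u_k|^2 \ll K(\log x)^{O(1)}$. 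For $\mS_2$, the residue constraint $k_2 \equiv a_1\bar{a_2 c} \pmod r$ couples $c$ with $k_2$; after Poisson, the $k_1$-summation against an exponential of the form $\e(h\bar{k_1}(\cdots)/r)$ is handled by the incomplete Weil bound from \cref{lem:incomplete-weil}, while the $c$- or $k_2$-summation again contributes a Gauss sum factor. Both bounds are comfortably small thanks to $D \le x^\eps$ and $H \asymp x^\eps R/M$, and the parameter conditions in \cref{eq:conditions} deliver the required saving.

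The main case, which I expect to be the main obstacle, is \cref{eq:S1-estimate}. After the Poisson step, using $c \equiv a_1\bar{a_2 k_1} \pmod r$ (which follows from the constraint $k_1 \equiv a_1\bar{a_2 c} \pmod r$), the difference $\mS_1 - \hat{\Phi}(0)X_1$ reduces to an exponential sum essentially of the shape sketched in \cref{eq:sketch-main-exponential}. Following the rearrangement outlined in \cref{subsec:limitations}, I would invoke \cref{lem:bezout} to flip moduli, substitute $t := (k_1 - k_2)/(re)$ so that $t \ll x^\eps$, factor $u_k = \sum_{n\ell = k} \beta_n \gamma_\ell$ via \cref{eq:u-def}, and apply Cauchy--Schwarz in the $t, r, n, k$ variables while retaining the congruence modulo $t$ inside the inner square. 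Expanding the square, the diagonal contribution is absorbed by the deamplification factor $1/|\msE| \asymp \log E/E$, which is what forces the choice of $E$ in part (i) to balance against the off-diagonal terms. The off-diagonal terms then reduce via \cref{lem:completion} to sums of Kloosterman sums with level $\ell_1 \ell_2$ of the form in \cref{eq:sketch-kloosterman-2}, to which I would finally apply \cref{thm:di-type-bound}.

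The delicate step, already flagged in \cref{subsec:limitations}, is that before invoking \cref{thm:di-type-bound} we must first fix $t$ and the residues $\ell_1, \ell_2 \pmod{t}$, at the cost of an $x^{o(1)}$ factor, so that the coefficients attached to the Kloosterman sums in \cref{eq:sketch-kloosterman-2} become independent of $\ell_1, \ell_2$; this unlocks the factor $(NL/L^2)^{\theta_{\max}}$ in \cref{thm:di-type-bound} in place of the worse $x^{\theta_{\max}/2}$ that would arise from applying \cite[Theorem 9]{deshouillers1982kloosterman} pointwise. The parameter conditions in \cref{eq:conditions} are then precisely calibrated so that the resulting estimate yields the target $x^{-2\delta} K^2/R$, and optimizing the placement of $R$ in the allowed range reproduces the exponent $(5 - 4\theta)/(8 - 6\theta)$ of \cref{eq:exponent-theta}. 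Beyond this central argument, I anticipate considerable technical bookkeeping to uniformly track all GCD constraints, control error terms for general residues $a_1\bar{a_2}$ with $|a_1|, |a_2| \le x^\delta$, and cleanly separate the various dyadic ranges; these I would handle following Drappeau's template in \cite{drappeau2015theoremes}.
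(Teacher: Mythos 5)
Your high-level plan matches the paper's: Poisson on $\mE_r(c)$ followed by Gauss-sum / incomplete-Weil bounds for $\mS_2,\mS_3$, and the Bezout-flip / $t$-substitution / Cauchy--Schwarz / Kloosterman-completion route (culminating in \cref{thm:di-type-bound}) for $\mS_1$. But there is a concrete gap in your treatment of $\mS_1$: you propose applying Poisson to $\mE_r(c)$ \emph{first} and then carrying out the Bezout/substitution chain, whereas the paper shows this fails without a preliminary reduction. After Poisson, the right-hand side of \cref{prop:simplification} has an extra $h$-sum of length $H \asymp x^\eta R/M$, which is a nontrivial power of $x$ for the parameters in \cref{eq:optimal-parameters} (since $R > \sqrt{x}$); so all subsequent divisor extractions and GCD terms must beat this inflated trivial bound. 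The paper handles this by \cref{prop:bad-pairs}, which \emph{before} Poisson removes the pairs $(k_1,k_2)$ with $|k_1-k_2|\le K/x^\eta$, with $(k_1,k_2)>x^{\eta/2}$, or with $(k_1,(a_2k_2)^\infty)$, $(k_2,(a_2k_1)^\infty)$, $(k_1-k_2,(a_2k_1k_2)^\infty)$ larger than $x^\eta$. Without this step, the Bezout/$t$-substitution manipulations that you describe break down: the error term $O(x^{3\eta-1})$ from expanding $\e(ah\bar{bk}/r)$ relies on $|d_1k - d_2n\ell| \gg K/x^\eta$, the well-definedness and coprimality properties of $t$ rely on the GCD bounds, and the divisor sums controlling the $d_1,d_2,d_\Delta$ extraction require them to be $O(x^\eta)$. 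The paper even places a remark after \cref{prop:simplification} stressing exactly this point.

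A secondary slip: for $\mS_2$ the inner exponential after Poisson is $\e(a_1 h\bar{a_2 k_2}/r)$, so the incomplete Weil bound is applied to the $r$-summation (after a Bezout flip to $\e(-a_1 h\bar r/(a_2k_2))$), not to "$k_1$"; the variable $k_1$ never enters the exponential in $\mS_2$. You should also note that after the $t$-substitution the variable $r$ is replaced, so the second Cauchy--Schwarz is in $(t, n, k)$ (or, with the paper's normalization, $(t', n', k)$), not "$t, r, n, k$". These are minor but reflect the same underlying issue of not having the bad-pair reduction to make the moduli-flipping step precise.

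Finally, even granting the omissions above, you do not indicate how the choice of $E$ in part (i) is tied to the conditions of \cref{eq:conditions}. In the paper this is forced by the specific shape of the final bound after \cref{thm:di-type-bound} (see \cref{eq:E-choice} and the verification that $N^2 L \ll x^{1-\eps}E$, $E\ll x^{-\eps}NLR^{-1}$ are compatible with \cref{eq:conditions}); "balance against the off-diagonal terms" as you write is directionally right but does not substitute for this calculation.
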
 

\begin{proof}[Proof of \cref{thm:triple-conv-estimate} assuming \cref{prop:dispersion-sums}]
The hypothesis of \cref{thm:triple-conv-estimate} assumes \cref{eq:conditions}, so we can pick $E$ as in \cref{prop:dispersion-sums}.$(i)$, subject to \cref{eq:E-conditions}. Then, by combining \cref{prop:setup,prop:dispersion-sums,prop:main-terms}, we obtain
\[
\begin{aligned}
    \Delta
    &\ll_\eps
    x\sqrt{\frac{R\log x}{K^2} \left(\hat{\Phi}(0)\frac{K^2}{RD}(\log x)^6 + x^{-2\delta} \frac{K^2}{R} \right)} + x^{1-\eps}
    \\
    &\ll
    x\sqrt{(\log x)^7 \left(\frac{1}{D} + \frac{1}{x^{2\delta}}\right)} + x^{1-\eps}
\end{aligned}
\]
The conclusion of \cref{thm:triple-conv-estimate} follows after replacing $\delta$ with $\min(\delta, \eps)$.
\end{proof} 

Our remaining task is to prove  \cref{prop:dispersion-sums}; the truncated Poisson expansion of the coefficients $\mE_r(c)$ from \cref{eq:disp-sums-2} will ultimately reduce our problem to that of bounding various exponential sums. We note that we have not chosen the value of $\delta$ in terms of $\eps$ yet; the condition $\delta \le \eps/2$ will suffice for estimating $\mS_2$ and $\mS_3$, but more will be needed for the (much more involved) study of $\mS_1$.

\section{The second and third dispersion sums}

Here we prove \cref{prop:dispersion-sums}.$(ii)$, adapting Drappeau's arguments in \cite[Sections 3.2 and 3.3]{drappeau2015theoremes}. We assume all the parameter conditions in \cref{eq:conditions} and \cref{eq:E-conditions}.

\begin{proof}[Proof of \cref{eq:S3-estimate}, estimating $\mS_3$]
Recall from \cref{eq:disp-sums-2} that
\[
\begin{aligned}
    &\mS_3 - \hat{\Phi}(0) X_3 
    \\
    &= \sum_{e \in \msE} \sum_{(r,a_1a_2)=1} \Phi\left(\frac{r}{R}\right) \frac{1}{\varphi(r)\varphi(re)}  \sum_{c \in (\Z/r\Z)^\times} \mE_r(c) \left\vert \sum_{(k,re)=1} u_k \sum_{\substack{\chi \in \mX_D \\ \cond(\chi) \mid r}} \chi(k \bar{a_1}a_2 c)\right\vert^2
    \\
    &=
    \sum_{e \in \msE} \sum_{(r,a_1a_2)=1} \Phi\left(\frac{r}{R}\right) \frac{1}{\varphi(r)\varphi(re)} 
    \sum_{\substack{\chi_1, \chi_2 \in \mX_D \\ \cond(\chi_i) \mid r}}
    \bar{\chi_1} \chi_2 (a_1 \bar{a_2})
    \sum_{\substack{k_1, k_2 \\ (k_1k_2, re) = 1}} 
    \chi_1(k_1) u_{k_1} \bar{\chi_2(k_2) u_{k_2}} \\
    &
    \qquad\qquad\qquad\qquad\qquad\qquad\qquad\qquad\qquad\qquad\qquad\qquad\quad
    \times 
    \sum_{c \in (\Z/r\Z)^\times} \chi_1 \bar{\chi_2}(c) \mE_r(c),
\end{aligned}
\]
where $\mE_r(c)$ is given by \cref{eq:Poisson-difference}. Expanding $\mE_r(c)$ according to \cref{lem:poisson} with $H := x^\eps R M^{-1}$, we obtain
\begin{equation} \label{eq:Gauss}
    \sum_{c \in (\Z/r\Z)^\times} \chi_1 \bar{\chi_2}(c) \mE_r(c)
    =
    \frac{1}{r} \sum_{0 < |h| < H} \hat{\Phi}\left(\frac{hM}{r}\right) \sum_{c \in (\Z/r\Z)^\times} \chi_1 \bar{\chi_2}(c)\ \e\left(\frac{ch}{r}\right)
    + O_\eps\left(x^{-99}\right).
\end{equation}
(In such manipulations, we warn the reader of the potential confusion between the integer variable $e \in \msE$ and the function $\e(\cdot)$; the difference should be clear from context.)

The inner sum (over $c$) in \cref{eq:Gauss} is a Gauss sum, which we can bound using \cref{lem:gauss} for the Dirichlet character $\chi_1 \bar{\chi_2} \one_{(\cdot, r) = 1} \pmod{r}$ (whose conductor divides $r$ and is at most equal to $D^2 \le x^{2\eps}$). This yields
\[
\begin{aligned}
    \sum_{c \in (\Z/r\Z)^\times} \chi_1 \bar{\chi_2}(c) \mE_r(c) 
    &\ll_\eps x^{-100} +
    \frac{1}{r} \sum_{0 < |h| < H} x^\eps \sum_{d \mid (h, r)} d
    \\
    &\ll x^{-100} +
    \frac{x^\eps}{r} \sum_{d \mid r} d \sum_{0 < |h| < H/d} 1
    \ll
    \frac{x^{\eps}}{R} \tau(r) H
    =
    x^{2\eps} \frac{\tau(r)}{M},
\end{aligned}
\]
which leads to
\[
\begin{aligned}
    \mS_3 - \hat{\Phi}(0) X_3
    \ll_\eps 
    x^{2\eps}
    \sum_{e \in \msE}
    \sum_{(r,a_1a_2)=1} 
    \Phi\left(\frac{r}{R}\right)
    \frac{\tau(r)}{\varphi(r)\varphi(re) M} 
 |\mX_D|^2 K^2
    &\ll 
    x^{6\eps} \frac{K^2}{M}
    \sum_{r} \Phi\left(\frac{r}{R}\right)
    \frac{\tau(r)}{\varphi(r)^2} 
    \\
    &\ll 
    x^{6\eps} \frac{K^2}{MR} (\log R)^{O(1)}.
\end{aligned}
\]
Since $x/M \asymp K \ll x^{(2/3)-6\eps}$ by \cref{eq:conditions}, we have $M \gg x^{1/3+6\eps} \gg x^{7\eps}$ for small enough $\eps$, and in particular $\mS_3 - \hat{\Phi}(0)X_3\ll_\eps x^{-\eps} K^2/R$, proving the easiest third of \cref{prop:dispersion-sums}.
\end{proof}

\begin{proof}[Proof of \cref{eq:S2-estimate}, estimating $\mS_2$]
Recall from \cref{eq:disp-sums-2} that
\[
\begin{aligned}
    &\mS_2 - \hat{\Phi}(0) X_2 
    \\
    &= 
    \sum_{e \in \msE} \sum_{(r,a_1a_2)=1} \Phi\left(\frac{r}{R}\right) \frac{1}{\varphi(re)} \sum_{c \in (\Z/r\Z)^\times} \mE_r(c) \sum_{\substack{(k_1k_2,re)=1 \\ k_2 \equiv a_1\bar{a_2c} \pmod{r}}} u_{k_1} \bar{u_{k_2}} \sum_{\substack{\chi \in \mX_D \\ \cond(\chi) \mid r}} \chi(k_1 \bar{a_1} a_2 c)
    \\
    &=
    \sum_{e \in \msE} 
    \sum_{\chi \in \mX_D}
    \sum_{(k_1 k_2, e) = 1} \chi(k_1) u_{k_1} \bar{\chi(k_2) u_{k_2}} 
    \sum_{\substack{(r, a_ik_i)=1 \\ \cond(\chi) \mid r}} \Phi\left(\frac{r}{R}\right) \frac{1}{\varphi(re)} \mE_r(a_1 \bar{a_2 k_2}).
\end{aligned}
\]
Applying \cref{lem:poisson} with $H := x^\eps R M^{-1}$ to expand $\mE_r(a_1\bar{a_2 k_2})$ (as given in \cref{eq:Poisson-difference}), we obtain
\[
\begin{aligned}
    &\mS_2 - \hat{\Phi}(0) X_2 
    \\
    &= 
    \sum_{e \in \msE}
    \sum_{\chi \in \mX_D}
    \sum_{(k_1 k_2, e) = 1} \chi(k_1) u_{k_1} \bar{\chi(k_2) u_{k_2}} 
    \sum_{\substack{(r, a_ik_i)=1 \\ \cond(\chi) \mid r}} \frac{1}{r\varphi(re)} \Phi\left(\frac{r}{R}\right) 
    \sum_{1 \le |h| \le H} \hat{\Phi}\left(\frac{hM}{r}\right) 
    \e \left(\frac{a_1 h \bar{a_2k_2}}{r}\right)
    \\
    &+ 
    O_\eps\left(x^{-90}\right),
\end{aligned}
\]
where we used that $\varphi(re) \gg \varphi(r) e$ as before (since $e$ is prime). The error term is acceptable, so let us focus on the main term in the right-hand side (denote this by $Y_2$). By \cref{lem:bezout}, we have
\[
    \frac{\bar{a_2 k_2}}{r} + \frac{\bar{r}}{a_2 k_2} 
    \equiv 
    \frac{1}{a_2 k_2 r} \quad \pmod{1},
\]
so that
\begin{equation} \label{eq:Y2}
    Y_2 \ll 
    \sum_{e \in \msE}
    \sum_{\chi \in \mX_D}
    \sum_{k_1, k_2} |u_{k_1} u_{k_2}|
    \sum_{1 \le |h| \le H}
    \left\vert 
    \sum_{\substack{(r, a_ik_i)=1 \\ \cond(\chi) \mid r}} \frac{1}{r\varphi(re)} \Phi\left(\frac{r}{R}\right) 
    \hat{\Phi}\left(\frac{hM}{r}\right) 
    \e \left(\frac{a_1 h}{a_2 k_2 r} - \frac{a_1 h\bar{r}}{a_2 k_2}\right) \right\vert.
\end{equation}
At this point we decompose
\[
    \frac{1}{\varphi(re)}
    =
    \frac{1}{\varphi(r)} \left(\frac{1}{e-1} - \frac{1}{e(e-1)} \one_{e \mid r} \right),
\]
aiming to apply the exponential sum bound in \cref{lem:incomplete-weil}. Fixing $e, a_i, k_i, h$, this lets us rewrite the sum over $r$ in the right-hand side of \cref{eq:Y2} as 
\[
    \frac{1}{e-1} Z_2(\cond(\chi)) - \frac{1}{e(e-1)} Z_2([\cond(\chi), e]),
\]
where
\[
    Z_2(\ell) := 
    \sum_{\substack{(r, a_ik_i)=1 \\ \ell \mid r}} u(r) \frac{r}{\varphi(r)}\
    \e \left(-\frac{a_1 h\bar{r}}{a_2k_2} \right)
\]
and
\[
    u(r) := \frac{1}{r^2} \Phi\left(\frac{r}{R}\right) \hat{\Phi}\left(\frac{hM}{r}\right) \e \left( \frac{a_1h}{a_2k_2 r} \right).
\]
Note that $u$ extends to a differentiable function of a real variable $\xi$, supported in $[R/2, 3R]$, and with derivative bounds
\[
\begin{aligned}
    |u'(\xi)| &\ll \frac{1}{R^3} + \frac{1}{R^2} \left(\frac{1}{R} + \frac{HM}{R^2} + \frac{|a_1| H}{KR^2} \right)
    \\
    &\ll 
    \frac{1}{R^2} \left(\frac{1}{R} + \frac{x^\eps R}{R^2} + \frac{|a_1| x^\eps R}{xR^2} \right)
    \quad 
    \ll 
    \frac{x^{2\eps}}{R^3}.
\end{aligned}
\]
in this region (we used that $H = x^\eps R M^{-1}$, $MK \asymp x$, and the very crude bound $|a_1| \ll x^{1+\eps}$). So we may use integration by parts to estimate $Z_2$; letting 
\[
    v_\ell(\xi) := \sum_{\substack{(r, a_ik_i) = 1 \\ \ell \mid r \\ r \le \xi}} \frac{r}{\varphi(r)}\ \e \left(- \frac{a_1 h\bar{r}}{a_2k_2} \right),
\]
which can be bounded via \cref{lem:incomplete-weil} (with $n = a_1 h$, $c = a_2k_2$, $m = r$ and $k = a_1k_1$), we obtain
\[
\begin{aligned}
    Z_2(\ell)
    =
    \int u(\xi) dv_\ell(\xi)
    = 
    - \int v_\ell(\xi) du(\xi)
    &\ll 
    \frac{x^{2\eps}}{R^2} \sup_{\xi \in [R/2, 3R]} |v_\ell(\xi)|
    \\
    &\ll_\eps
    \frac{x^{3\eps}}{R^2} \left((a_1 h, a_2 k_2)^{1/2}K^{1/2} + (a_1 h, a_2 k_2) \frac{R}{K} \right),
\end{aligned}
\]
uniformly in $\ell \ge 1$. Returning to \cref{eq:Y2} and summing over $h$ and $k_2$, the GCD terms contribute at most $O_\eps(x^{\eps})$ on average (since $(a_1, a_2) = 1$). Thus
\[
\begin{aligned}
    Y_2 \ll_\eps x^{4\eps}
    |\msE| |\chi_D| K^2 H  \frac{1}{E} \frac{1}{R^2} \left(\sqrt{K} + \frac{R}{K} \right)
    &\le 
    x^{7\eps} \frac{K^2}{MR} \left(\sqrt{K} + \frac{R}{K} \right)
    \\
    &\ll 
    x^{7\eps - 1} \left(\frac{K^{7/2}}{R} + K^2\right).
\end{aligned}
\]
By \cref{eq:conditions}, we have $K^{3/2} \ll x^{1-9\eps}$ and $R \ll x^{1-11\eps}$, so we get a final bound of $Y_2 \ll_\eps x^{-\eps} K^2 / R$. This completes our proof of \cref{prop:dispersion-sums}.$(ii)$.
\end{proof}

\section{The first dispersion sum}
Finally, we work towards establishing \cref{prop:dispersion-sums}.$(i)$ (for a suitable choice of $\delta$ in terms of $\eps$); the first part of this section is very similar to \cite[Section 3.4]{drappeau2015theoremes}. Recall from \cref{eq:disp-sums-2} that
\[
    \mS_1 - \hat{\Phi}(0) X_1 = 
    \sum_{e \in \msE} \sum_{(r,a_1a_2)=1} \Phi\left(\frac{r}{R}\right) \sum_{c \in (\Z/r\Z)^\times} \mE_r(c) \sum_{\substack{(k_1k_2,re)=1 \\ k_1 \equiv k_2 \pmod{re} \\ k_i \equiv a_1\bar{a_2c} \pmod{r}}} u_{k_1} \bar{u_{k_2}},
\]
where $\mE_r(c)$ is given by \cref{eq:Poisson-difference}. We wish to bound this by $O_\eps(x^{-2\delta} K^2 / R)$, as in \cref{eq:S1-estimate}.

We still aim to apply Poisson summation for the sums $\mE_r(c)$, and reduce our problem to bounding certain exponential sums. But due to issues that would arise later in manipulating these exponential sums, we first need to eliminate the contribution of certain `bad' pairs $(k_1, k_2)$, in terms of a small parameter $\eta$ (to be chosen later in terms of $\eps$, as an intermediary step to choosing $\delta$). 

\begin{proposition}[Eliminating bad index pairs] \label{prop:bad-pairs}
For $\eps \ge \eta > 0$, under the parameter conditions in \cref{eq:conditions,eq:E-conditions}, one has
\[
\begin{aligned}
    \mS_1 - \hat{\Phi}(0) X_1 
    =
    \sum_{e \in \msE} \sum_{(r,a_1a_2)=1} \Phi\left(\frac{r}{R}\right) \sum_{c \in (\Z/r\Z)^\times} \mE_r(c) \sum_{\substack{(k_1, k_2) \in \mK(\eta) \\ (k_1k_2, re) = 1 \\ k_1 \equiv k_2 \pmod{re} \\ k_i \equiv a_1 \bar{a_2 c} \pmod{r}}} u_{k_1} \bar{u_{k_2}} 
    \
    +\
    O_\eta\left(x^{-\eta/4} \frac{K^2}{R} \right),
\end{aligned}
\]
where
\begin{equation} \label{eq:bad-pairs}
    \mK(\eta) := \left\{ 
    (k_1, k_2) \in \N^2\  \Big\vert\  
    \twolinemath{(k_1, (a_2k_2)^\infty) \le x^\eta,\ (k_2, (a_2k_1)^\infty) \le x^\eta,}
    {
    (k_1 - k_2, (a_2k_1k_2)^\infty) \le x^\eta,\ |k_1 - k_2| > K/x^\eta
    }
    \right\}.
\end{equation}
\end{proposition}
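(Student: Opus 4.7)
The plan is to bound the contribution to $\mS_1 - \hat{\Phi}(0) X_1$ of pairs $(k_1,k_2) \notin \mK(\eta)$, splitting into cases according to which of the four defining conditions of $\mK(\eta)$ is violated. A preliminary step is to perform the $c$-sum in \cref{eq:sums-2}: since the congruences $k_i \equiv a_1\bar{a_2c} \pmod{r}$ together with $k_1 \equiv k_2 \pmod{r}$ uniquely determine $c \in (\Z/r\Z)^\times$, one obtains
\[
\mS_1 - \hat{\Phi}(0) X_1 = \sum_{e \in \msE} \sum_{(r,a_1a_2)=1} \Phi\!\left(\frac{r}{R}\right) \sum_{\substack{(k_1k_2,re)=1 \\ k_1\equiv k_2 \pmod{re}}} u_{k_1} \bar{u_{k_2}}\, \mE_r(a_1\bar{a_2k_1}).
\]
From \cref{lem:poisson} one has the pointwise bound $|\mE_r(c)| \ll 1/M$ and (via Parseval) $\sum_{c \pmod{r}} |\mE_r(c)|^2 \ll 1/M$, which are the main inputs for the trivial estimates below.

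For the three coprimality-type conditions $(k_1,(a_2k_2)^\infty) > x^\eta$, $(k_2,(a_2k_1)^\infty) > x^\eta$, and $(k_1-k_2,(a_2k_1k_2)^\infty) > x^\eta$, I will use a Rankin-type counting argument. Focusing on the first, I parametrize $k_1 = ab$ with $a := (k_1,(a_2k_2)^\infty) > x^\eta$, $\rad(a) \mid a_2k_2$, and $(b,a_2k_2) = 1$; the standard Rankin bound $\sum_{a > x^\eta,\,\rad(a) \mid n} a^{-1} \ll x^{-\eta\sigma} \prod_{p \mid n}(1-p^{\sigma-1})^{-1}$ produces a saving of $x^{-\eta\sigma + O(\eps)}$ for suitably small $\sigma > 0$. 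Combined with the congruence $k_1 \equiv k_2 \pmod{re}$, the pointwise bound on $\mE_r$, and summation using $ER \ll K/x^\eps$ from \cref{eq:E-conditions} and $M \gg x^{1/3+6\eps}$ (which follows from the constraints in \cref{eq:conditions}), this yields a contribution bounded by $x^{-\eta/4} K^2/R$. The other two coprimality conditions admit analogous treatments, after suitable symmetrization in $(k_1,k_2)$ and the identity $(k_1-k_2,n^\infty) = \prod_{p \mid n} p^{v_p(k_1-k_2)}$.

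The most delicate case is the separation condition $|k_1 - k_2| > K/x^\eta$, where the trivial bound $|\mE_r(c)| \ll 1/M$ alone falls short by a factor $R/M \gg x^{4\eps}$. I treat the diagonal $k_1 = k_2$ and the near-diagonal $0 < |k_1 - k_2| \le K/x^\eta$ together by writing $k_2 = k_1 + jre$ with $|j| \le K/(rex^\eta)$, and expanding $\mE_r$ via \cref{lem:poisson}, reducing the contribution to sums of the form
\[
\sum_{e \in \msE} \sum_r \frac{\Phi(r/R)}{r} \sum_{0 < |h| \le H} \hat{\Phi}(hM/r) \sum_j \sum_{\substack{k_1 \\ (k_1,re)=1}} u_{k_1} \bar{u_{k_1+jre}}\, \e\!\left(\frac{a_1h\bar{a_2k_1}}{r}\right).
\]
For each fixed $j$, the inner sum over $k_1$ is an incomplete Kloosterman-type sum with bounded weights $|u_{k_1} \bar{u_{k_1+jre}}| \ll x^\eps$, which I plan to bound via the incomplete Weil estimate \cref{lem:incomplete-weil} (after a completion argument to accommodate the weights). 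The resulting saving of order $\sqrt{R}$ over trivial, combined with $MK \asymp x$ and $R \ll x^{(2/3) - 11\eps}$ from \cref{eq:conditions}, delivers the desired bound $\ll x^{-\eta/4} K^2/R$. The main obstacle will be to ensure that this Kloosterman-sum cancellation is robust across the full parameter range, uniformly in $h$ and $j$, so that the error from the separation condition matches those from the coprimality conditions.
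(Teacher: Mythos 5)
Your treatment of the three coprimality conditions via a Rankin-type argument is sound in spirit and parallels what the paper does (the paper first eliminates $v:=(k_1,k_2)>x^{\eta/2}$ so that the constraint becomes $\rad(d_1)\mid a_2 v$ with $a_2 v$ small, then uses the harmonic divisor sum bound of \cref{eq:harmonic-divisors}; your direct Rankin estimate on $\rad(a)\mid a_2 k_2$ is cruder but still gives $x^{-\eta\sigma+o(1)}$ since $O(1)^{\omega(a_2k_2)}=x^{o(1)}$). The genuine gap is in your handling of the near-diagonal terms $|k_1-k_2|\le K/x^\eta$, and it stems from the wrong $\mE_r$-bound.

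You start from $|\mE_r(c)|\ll 1/M$ (and $\sum_c|\mE_r(c)|^2\ll 1/M$), correctly note this loses a factor $R/M$, and then try to recover the loss by Poisson-expanding $\mE_r$ and appealing to Weil for the resulting incomplete Kloosterman sums. The paper instead uses the elementary $L^1$ bound
\[
\sum_{c\in(\Z/r\Z)^\times}|\mE_r(c)|\ \le\ \frac{1}{M}\sum_m\Phi\!\left(\frac{m}{M}\right)+\varphi(r)\frac{\hat\Phi(0)}{r}\ \ll\ 1,
\]
obtained directly from the triangle inequality in the definition \cref{eq:Poisson-difference}. Neither of your two stated bounds implies this (Cauchy--Schwarz plus Parseval only gives $\sum_c|\mE_r(c)|\ll\sqrt{R/M}$, which is not $O(1)$). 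With the $L^1$ bound, the near-diagonal contribution is bounded trivially by $\ll_\eta x^{\eta/2}ER\cdot(K/R)\cdot K/(x^\eta RE)\asymp x^{-\eta/2}K^2/R$, and no exponential-sum cancellation is needed.

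Your Weil-based rescue is moreover unlikely to close without essentially reproducing the hard part of the paper. The inner sum $\sum_{k_1}u_{k_1}\bar{u_{k_1+jre}}\,\e(a_1h\bar{a_2k_1}/r)$ has rough divisor-type coefficients; \cref{lem:incomplete-weil} only handles a specific smooth weight $m/\varphi(m)$, not an arbitrary $x^{o(1)}$-bounded sequence. To extract cancellation you would have to further decompose $u$ and apply Cauchy--Schwarz in $k_1$, which is precisely the dispersion machinery occupying the rest of the paper. Finally, note the remark after \cref{prop:simplification}: the entire purpose of \cref{prop:bad-pairs} is to remove the near-diagonal pairs \emph{before} Poisson summation, because Poisson introduces the extra factor $H$ that makes the near-diagonal terms dominant. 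Your proposal applies Poisson to exactly these terms and then tries to undo the damage; this inverts the logical design of the argument and, without the $L^1$ observation, does not obviously produce the required power saving uniformly in the allowed parameter range.
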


\begin{proof}
We eliminate the contribution of several sets of pairs $(k_1, k_2)$, putting absolute values on all the coefficients involved; thus it does not matter if some of the `eliminated sets' have nonempty intersections.
First, we consider the almost-diagonal pairs with $|k_1 - k_2| \le K/x^\eta$; using that $\sum_{c \in (\Z/r\Z)^\times} |\mE_r(c)| \le \frac{1}{M} \sum_m \Phi\left(\frac{m}{M}\right) + \hat{\Phi}(0) \ll 1$ and $x^{\eta} RE \ll K$ (which follows from \cref{eq:E-conditions} and $\eta \le \eps$), these contribute to $\mS_1 - \hat{\Phi}(0)X_1$ at most
\[
\begin{aligned}
    &\ll \sum_{e \in \msE} \sum_{(r,a_1a_2)=1} \Phi\left(\frac{r}{R}\right) \sum_{c \in (\Z/r\Z)^\times} |\mE_r(c)| \sum_{\substack{k_1 \equiv a_1 \bar{a_2c} \pmod{r} \\ (k_1, e) = 1}} |u_{k_1}| \sum_{\substack{k_2 \equiv k_1 \pmod{re} \\ |k_1 - k_2| \le K/x^\eta}}  |u_{k_2}|
    \\
    &\ll_\eta x^{\eta/2} E R \left(\frac{K}{R} + 1\right) \left(\frac{K}{x^\eta RE} + 1\right)
    \\
    &\ll_\eta x^{\eta/2}ER \frac{K}{R} \frac{K}{x^\eta RE}
    \\
    &\asymp 
    x^{-\eta/2} \frac{K^2}{R}.
\end{aligned}
\]
Then, we consider those pairs with $v := (k_1, k_2) > x^{\eta/2}$. Their contribution to $\mS_1 - \hat{\Phi}(0)X_1$ is at most
\[
    \ll 
    \sum_{e \in \msE} \sum_{(r,a_1a_2)=1} \Phi\left(\frac{r}{R}\right) \sum_{c \in (\Z/r\Z)^\times} |\mE_r(c)|
    \sum_{\substack{b \in (\Z/re\Z)^\times \\ b \equiv a_1\bar{a_2c} \pmod{r}}} 
    \sum_{\substack{v > x^{\eta/2} \\ (v,re)=1}}
    \left(\sum_{\substack{k \equiv b \pmod{re} \\ v \mid k}} |u_k| \right)^2.
\]
Using that $(v, re) = 1$, we can bound one inner sum over $k$ by $\ll_\eta x^{\eta/8}(K(vRE)^{-1} + 1) \ll x^{-3\eta/8}K(RE)^{-1}$ (recall that $x^{\eta} RE \ll K$ by \cref{eq:E-conditions}). This yields a total contribution of
\[
\begin{aligned}
    &\ll_\eta
    \frac{K}{REx^{3\eta/8}}
    \sum_{e \in \msE} \sum_{(r,a_1a_2)=1} \Phi\left(\frac{r}{R}\right) \sum_{c \in (\Z/r\Z)^\times} |\mE_r(c)|
    \sum_{\substack{v > x^\eta \\ (v,re)=1}}
    \sum_{\substack{k \equiv a_1\bar{a_2c} \pmod{r} \\ v \mid k}} |u_k| 
    \\
    &\le
    \frac{K}{Rx^{3\eta/8}}
    \sum_{(r,a_1a_2)=1} \Phi\left(\frac{r}{R}\right) \sum_{c \in (\Z/r\Z)^\times} |\mE_r(c)|
    \sum_{\substack{k \equiv a_1\bar{a_2c} \pmod{r}}} 
    \tau(k) |u_k| 
    \\
    &\ll_\eta
    \frac{K}{Rx^{2\eta/8}} R
    \left(\frac{K}{R} + 1\right)
    \\
    &\ll x^{-\eta/4} \frac{K^2}{R},
\end{aligned}
\]
which is also acceptable. Keeping the notation $v = (k_1, k_2)$, which we may now assume is at most $x^{\eta/2}$, note that 
\[
    d_1 := (k_1, (a_2k_2)^\infty) = (k_1, (a_2 v)^\infty),
\]
and let us consider those pairs $(k_1, k_2)$ with $d_1 > x^\eta$. Using that $x^{\eta/2} RE \ll K$ and swapping sums, these contribute at most
\[
\begin{aligned}
    &
    \sum_{v \le x^{\eta/2}}
    \sum_{e \in \msE} \sum_{(r,a_1a_2)=1} \Phi\left(\frac{r}{R}\right) \sum_{(m, r) = 1} \frac{1}{M} \Phi\left(\frac{m}{M}\right) 
    \sum_{\substack{d_1 \mid (a_2v)^\infty \\ d_1 > x^\eta}}
    \sum_{\substack{k_1 \equiv a_1 \bar{a_2m} \pmod{r} \\ (k_1, re) = 1,\ d_1 \mid k_1}} |u_{k_1}| \sum_{\substack{k_2 \equiv k_1 \pmod{re} \\ (k_1, k_2) = v}}  |u_{k_2}|
    \\
    &\ll_\eta 
    x^{\eta/8} \sum_{v \le x^{\eta/2}}
    \sum_{e \in \msE} \sum_{(r,a_1a_2)=1} \Phi\left(\frac{r}{R}\right) \sum_{(m, r) = 1} \frac{1}{M} \Phi\left(\frac{m}{M}\right) 
    \sum_{\substack{d_1 \mid (a_2v)^\infty \\ d_1 > x^\eta}} 
    \sum_{\substack{k_1 \in (K, 4K] \\ (k_1, re) = 1,\ d_1 \mid k_1 \\ r \mid a_2 m k_1 - a_1}} \left(\frac{K}{REv} + 1\right)
    \\
    &\ll 
    x^{\eta/8} \frac{K}{RE} \sum_{v \le x^{\eta/2}} \frac{1}{v}
    \sum_{e \in \msE} \sum_{m} \frac{1}{M} \Phi\left(\frac{m}{M}\right) 
    \sum_{\substack{d_1 \mid (a_2v)^\infty \\ d_1 > x^\eta}} 
    \sum_{\substack{k_1 \in (K, 4K] \\ d_1 \mid k_1}} 
    \sum_{\substack{(r,m a_1a_2)=1 \\ r \mid a_2 m k_1 - a_1}} \Phi\left(\frac{r}{R}\right).
\end{aligned} 
\]
Considering the cases $a_2 m k_1 = a_1$ and $a_2 m k_1 - a_1 \neq 0$ separately, and using $R \ll x^{1-\eta/2} \asymp KM x^{-\eta/2}$ (by \cref{eq:conditions}), this is further bounded by
\[
\begin{aligned}
    &\ll 
    x^{\eta/8} \frac{K}{RE} \sum_{v \le x^{\eta/2}} \frac{1}{v}
    \sum_{e \in \msE} \left(\tau(|a_1|)^3 \frac{R}{M} + \sum_{m} \frac{1}{M} \Phi\left(\frac{m}{M}\right) 
    \sum_{\substack{d_1 \mid (a_2v)^\infty \\ x^\eta < d_1 \le 4K}} 
    \sum_{\substack{k_1 \in (K, 4K] \\ d_1 \mid k_1 \\ a_2 m k_1 \neq a_1}} 
    \tau(|a_2 m k_1 - a_1|) \right)
    \\
    &\ll_\eta
    x^{\eta/4}\frac{K}{M} + x^{\eta/4} \frac{K}{RE} \max_{v \le x^{\eta/2}}
    \sum_{e \in \msE} \sum_{m} \frac{1}{M} \Phi\left(\frac{m}{M}\right) 
    \sum_{\substack{d_1 \mid (a_2v)^\infty \\ x^\eta < d_1 \le 4K}} 
    \left(\frac{K}{d_1} + 1\right)
    \\
    &\ll 
    x^{-\eta/4}\frac{K^2}{R} +
    x^{\eta/4} \frac{K}{R} \max_{v \le x^{\eta/2}} \sum_{\substack{d_1 \mid (a_2 v)^\infty \\ x^\eta < d}} \frac{K}{d_1}.
\end{aligned}
\]
Now since the number of distinct prime factors of a positive integer $b$ is $O(\log b / \log \log b)$, for $b \ll x$ we have the majorization
\begin{equation} \label{eq:harmonic-divisors}
\begin{aligned}
    \sum_{\substack{d \mid b^\infty \\ x^\eta < d}} d^{-1} 
    &\le
    x^{-2\eta/3} \sum_{d \mid b^\infty} d^{-1/3}
    \\
    &=
    x^{-2\eta/3} \prod_{\text{prime } p \mid b} \frac{1}{1 - p^{-1/3}}
    \ll
    x^{-2\eta/3} O(1)^{O(\log b / \log \log b)}
    \ll_\eta 
    x^{-\eta/2}.
\end{aligned}
\end{equation}
Using this, we find that the previous sum contributes an acceptable $O_\eta(x^{-\eta/4} K^2/R)$. 

The contribution of the pairs with $d_1 > x^\eta$ to $\hat{\Phi}(0)X_1$ is simpler and similarly bounded, and the contribution of the pairs with $d_2 := (k_2, (a_2k_1)^\infty) = (k_2, (a_2v)^\infty)$ is bounded symmetrically. All that is left is to eliminate the contribution of the pairs with large values of $(k_1 - k_2, (a_2k_1k_2)^\infty)$; since $(k_1 - k_2, k_1k_2) = (k_1 - k_2, v^2)$, we have
\[
    d_\Delta := (k_1 - k_2, (a_2k_1k_2)^\infty)
    =
    (k_1 - k_2, (a_2v)^\infty).
\]
Using that $Rx^{\eta/2} \ll K$, the pairs with $d_\Delta > x^\eta$ (as well as $v \le x^{\eta/2}$ and $|k_1 - k_2| > K/x^\eta$) contribute to $\mS_1$ at most
\[
\begin{aligned}
    &\ll 
    \sum_{v \le x^{\eta/2}}
    \sum_{e \in \msE} \sum_{(r,a_1a_2)=1} \Phi\left(\frac{r}{R}\right) \sum_{(m,r)=1} \frac{1}{M} \Phi\left(\frac{m}{M}\right)
    \sum_{\substack{d_\Delta \mid (a_2v)^\infty \\ d_\Delta > x^\eta}} 
    \sum_{\substack{k_1 \equiv a_1 \bar{a_2m} \pmod{r} \\ (k_1, re) = 1,\ (k_1, k) = v \\ 0 < |k| \le 8K \\ [d_\Delta, re] \mid k}} |u_{k_1}u_{(k_1+k)}|
    \\
    &\ll_\eta x^{\eta/8}
    \sum_{v \le x^{\eta/2}}
    \sum_{e \in \msE} \sum_{(r,a_1a_2)=1} \Phi\left(\frac{r}{R}\right) \sum_{(m,r)=1} \frac{1}{M} \Phi\left(\frac{m}{M}\right)
    \sum_{\substack{d_\Delta \mid (a_2v)^\infty \\ d_\Delta > x^\eta}} 
    \sum_{\substack{0 < |k| \le 8K \\ [d_\Delta, re] \mid k}} \sum_{\substack{k_1 \in (k, 4K] \\ k_1 \equiv a_1 \bar{a_2m} \pmod{r} \\ (k_1, re) = 1,\ (k_1, k) = v}} 1
    \\
    &\ll 
    x^{\eta/8}
    \sum_{v \le x^{\eta/2}}
    \sum_{e \in \msE} \sum_{(r,a_1a_2)=1} \Phi\left(\frac{r}{R}\right) \sum_{(m,r)=1} \frac{1}{M} \Phi\left(\frac{m}{M}\right)
    \sum_{\substack{d_\Delta \mid (a_2v)^\infty \\ d_\Delta > x^\eta}} 
    \sum_{\substack{0 < |k| \le 8K \\ [d_\Delta, re] \mid k}} \left(\frac{K}{Rv} + 1\right)
    \\
    &\ll 
    x^{\eta/8} \frac{K}{R}
    \sum_{v \le x^{\eta/2}} \frac{1}{v}
    \sum_{m} \frac{1}{M} \Phi\left(\frac{m}{M}\right)
    \sum_{\substack{d_\Delta \mid (a_2v)^\infty \\ d_\Delta > x^\eta}} 
    \sum_{\substack{0 < |k| \le 8K \\ d_\Delta \mid k}} \sum_{\substack{e \in \msE \\ (r,a_1a_2)=1 \\ re \mid k}} \Phi\left(\frac{r}{R}\right),
\end{aligned}
\]
Bounding the inner sum by $\tau(k)^2 \ll_\eta x^{\eta/9}$, this further becomes
\[
\begin{aligned}
    &\ll_\eta 
    x^{\eta/4} \frac{K}{R}
    \max_{v \le x^{\eta/2}}
    \sum_{m} \frac{1}{M} \Phi\left(\frac{m}{M}\right)
    \sum_{\substack{d_\Delta \mid (a_2v)^\infty \\ x^\eta < d_\Delta \le 8K}} 
    \sum_{\substack{|k| \le 8K \\ d_\Delta \mid k}} 1
    \\
    &\ll 
    x^{\eta/4} \frac{K}{R}
    \max_{v \le x^{\eta/2}}
    \sum_{\substack{d_\Delta \mid (a_2v)^\infty \\ x^\eta < d_\Delta \le 8K}} 
    \left(\frac{K}{d_\Delta} + 1 \right)
    \\
    &\ll 
    x^{\eta/4} \frac{K}{R} \max_{v \le x^{\eta/2}} \sum_{\substack{d_\Delta \mid (a_2 v)^\infty \\ x^\eta < d_\Delta \le 8K}} \frac{K}{d_\Delta}.
\end{aligned}
\]
Using the majorization from \cref{eq:harmonic-divisors}, this is again $O_\eta(x^{-\eta/4} K^2/R)$. 

The contribution of the pairs with $d_\Delta > x^\eta$ to $\hat{\Phi}(0) X_1$ is simpler and similarly bounded by $O_\eta(x^{-\eta/4} K^2/R)$. Having eliminated the absolute contribution of all pairs in $\mK(\eta)$ at least once, while incurring only admissible errors, we can conclude our proof of \cref{prop:bad-pairs}.
\end{proof}

We can now apply Poisson summation to prove the following.

\begin{proposition}[Reduction to exponential sum] \label{prop:simplification}
For $\eps \ge \eta > 0$ and $H := x^\eta R M^{-1}$, under the parameter conditions in \cref{eq:conditions,eq:E-conditions}, one has
\[
\begin{aligned}
    \mS_1 - \hat{\Phi}(0) X_1 
    =
    \sum_{e \in \msE} \sum_{(r, a_1a_2) = 1} \Phi\left(\frac{r}{R}\right) \frac{1}{r}
    \sum_{\substack{(k_1, k_2) \in \mK(\eta) \\ (k_1 k_2, re) = 1 \\ k_1 \equiv k_2 \pmod{re}}} u_{k_1} \bar{u_{k_2}} 
    \sum_{1 \le |h| \le H} \hat{\Phi}\left(\frac{hM}{r}\right) \e\left(a_1 h \frac{\bar{a_2k_1}}{r} \right)
    \\
    + O_\eta\left(x^{-\eta/4} \frac{K^2}{R} \right).
\end{aligned}
\]
\end{proposition}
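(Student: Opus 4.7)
The plan is to perform two routine steps: first collapse the inner summation over $c$ by using the congruence constraint, and then apply the truncated Poisson formula (Lemma \ref{lem:poisson}) to the resulting linear form $\mE_r$.

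Starting from the formula given in Proposition \ref{prop:bad-pairs}, I would first swap the order of summation to bring $(k_1, k_2)$ outside the $c$-sum. For fixed $e, r, k_1, k_2$ satisfying $(k_1 k_2, re) = 1$ and $k_1 \equiv k_2 \pmod{re}$, the condition $k_i \equiv a_1 \bar{a_2 c} \pmod{r}$ (with $(r, a_1 a_2) = 1$) is equivalent to the single congruence $c \equiv a_1 \bar{a_2 k_1} \pmod{r}$, which is well-defined since $(a_2 k_1, r) = 1$; moreover this $c$ automatically lies in $(\Z/r\Z)^\times$. Hence the sum over $c$ collapses to a single term, leaving
\[
    \mS_1 - \hat{\Phi}(0) X_1 = \sum_{e \in \msE} \sum_{(r, a_1 a_2)=1} \Phi\!\left(\frac{r}{R}\right) \sum_{\substack{(k_1,k_2) \in \mK(\eta) \\ (k_1 k_2, re)=1 \\ k_1 \equiv k_2 \pmod{re}}} u_{k_1} \bar{u_{k_2}}\, \mE_r\!\left(a_1 \bar{a_2 k_1}\right) + O_\eta\!\left(x^{-\eta/4} \frac{K^2}{R}\right).
\]

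Next, I apply Lemma \ref{lem:poisson} with the truncation parameter $H := x^\eta R M^{-1}$ (which satisfies the required lower bound $H \ge x^\eta q M^{-1}$ for $q = r \ll R$). By definition, $\mE_r(c)$ is precisely the difference between $M^{-1} \sum_{m \equiv c \,(r)} \Phi(m/M)$ and its expected average $\hat{\Phi}(0)/r$; hence the $h = 0$ term in Poisson cancels exactly with the subtracted main term, giving
\[
    \mE_r\!\left(a_1 \bar{a_2 k_1}\right) = \frac{1}{r} \sum_{1 \le |h| \le H} \hat{\Phi}\!\left(\frac{hM}{r}\right) \e\!\left(\frac{a_1 h \bar{a_2 k_1}}{r}\right) + O_\eta\!\left(x^{-100}\right).
\]
Plugging this in and interchanging the sum with the finite sum over $h$ yields the claimed main term verbatim.

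It only remains to control the Poisson tail error. The number of quadruples $(e, r, k_1, k_2)$ in the outer summation is $\ll ER \cdot K \cdot (K/(RE) + 1) \ll K^2/E + KR \ll x^2$, and each summand carries coefficients of size $x^{O(\eps)}$ from $|u_{k_i}|$. Multiplying by the per-term error $O(x^{-100})$ gives a total error of $O(x^{-90})$, which is dwarfed by the $O_\eta(x^{-\eta/4} K^2/R)$ term already present. There is no real obstacle here—the proposition is purely a ``Fourier-completion'' setup step. The only subtlety is the observation that restricting to pairs $(k_1, k_2) \in \mK(\eta)$ in the previous step (rather than after Poisson) is essential: it guarantees that after the Poisson expansion we can manipulate the resulting exponential $\e(a_1 h \bar{a_2 k_1}/r)$ using the coprimality conditions implicit in $\mK(\eta)$ without picking up uncontrollable degenerate contributions in subsequent sections.
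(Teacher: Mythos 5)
Your proposal is correct and follows essentially the same route as the paper: collapse the $c$-sum in Proposition~\ref{prop:bad-pairs} to a single term via the congruence $c \equiv a_1\bar{a_2 k_1} \pmod{r}$, then expand $\mE_r(a_1\bar{a_2 k_1})$ by Lemma~\ref{lem:poisson}, noting that the $h=0$ term cancels exactly and the Poisson tail is negligible. (A minor arithmetic slip in the count of quadruples, $ER\cdot K\cdot(K/(RE)+1) = K^2 + ERK$ rather than $K^2/E + KR$, is harmless since both are $\ll x^2$; the substantive reason for ordering Proposition~\ref{prop:bad-pairs} before Poisson, as the paper's remark explains, is that the $h$-sum worsens the trivial bound by a factor of $H$.)
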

\begin{proof}
Rewrite the sum in \cref{prop:bad-pairs} as
\[
    \sum_{e \in \msE} \sum_{(r,a_1a_2)=1} \Phi\left(\frac{r}{R}\right) \sum_{\substack{(k_1, k_2) \in \mK(\eta) \\ (k_1k_2, re) = 1 \\ k_1 \equiv k_2 \pmod{re}}} u_{k_1} \bar{u_{k_2}}\ \mE_r(a_1 \bar{a_2 k_1}),
\]
and apply \cref{lem:poisson} to expand $\mE_r(a_1 \bar{a_2 k_1})$. The resulting main term is precisely the sum in \cref{prop:simplification}, while the error terms are acceptable.
\end{proof}

\begin{remark}
The trivial bound for the right-hand side of \cref{prop:simplification} is $H$ times worse than for the right-hand side of \cref{prop:bad-pairs}, due to the additional sum over $h$. This is relevant because $H$ is a nontrivial power of $x$ for the choice of parameters in \cref{eq:optimal-parameters} (where $H \approx R M^{-1} \approx R^2/x$), since we are working with moduli $r$ well-beyond the $\sqrt{x}$ barrier. This is why we needed to eliminate the bad pairs $(k_1, k_2)$ (via \cref{prop:bad-pairs}) \emph{before} applying Poisson summation.
\end{remark}

We now go through a series of fairly technical manipulations, following \cite{drappeau2015theoremes} and \cite{maynard2025primes}, to reduce the sum in \cref{prop:simplification} to a variation of the exponential sum considered by Bombieri, Friedlander and Iwaniec in \cite[Section 10]{bombieri1987primes2}; the goal is to prove the remaining dispersion estimate for $\mS_1$ in \cref{prop:dispersion-sums}. We do this in two steps (after the statements of \cref{prop:drappeau-expo,prop:bfi-expo}); first, we assume the following exponential sum bound, which can be compared to Drappeau's \cite[Proposition 1]{drappeau2015theoremes}.

\begin{proposition}[Improved Drappeau-style exponential sum bound] \label{prop:drappeau-expo}
For all sufficiently small $\eps > 0$ and all $\eta \in (0, 1)$, under the parameter conditions in \cref{eq:conditions}, there exists $E$ satisfying \cref{eq:E-conditions} (with $K := NL$) such that the following holds. For any nonzero integer $a \ll x^{O(\eta)}$ and positive integers $b, d_a, d_1, d_2, d_\Delta, v, \delta_1, \delta_2 \ll x^{O(\eta)}$ with $d_2 = \delta_1 \delta_2$ and 
\[
    d_a \mid a, \qquad\quad 
    (d_1, d_2) = (d_1, d_\Delta) = (d_2, d_\Delta) = v, \qquad\quad 
    d_1d_2d_\Delta \mid b^\infty,
\]
one has
\[
\begin{aligned}
    \sum_{\substack{e \sim E \text{ prime} \\ (e, b) = 1}} \sum_{\substack{(r, b) = 1 \\ r \equiv 0 \pmod{d_a}}} \Phi\left(\frac{r}{R}\right) \frac{1}{r} 
    \sum_{\substack{(k, n\ell) = 1 \\ (k n \ell, reb) = 1 \\ d_1k - d_2n \ell = red_\Delta t \\ (t, b) = 1,\  red_\Delta |t| > K/x^\eta}} u'_k \beta'_n \gamma'_\ell
    \sum_{1 \le |h| \le H} \hat{\Phi}\left(\frac{hM}{r}\right) \e\left(a h \frac{\bar{bk}}{r} \right)
    \\
    \ll_{\eps, \eta} x^{O(\eta)-\eps/4} \frac{K^2}{R},
\end{aligned}
\]
where $H := x^\eta R M^{-1}$, and $|u'_k| \le \tau(d_1 k)$, $|\beta'_n| \le 1$, $|\gamma'_\ell| \le 1$ are sequences supported in $k \asymp K/d_1$, $n \sim N/\delta_1$, and $\ell \sim L/\delta_2$.
\end{proposition}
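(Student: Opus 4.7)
The proof would follow Maynard's arrangement \cite[\S 18]{maynard2020primes} (itself based on \cite[\S 10]{bombieri1987primes2} and Drappeau \cite[Proposition 1]{drappeau2015theoremes}), modified so that the final step uses \cref{thm:di-type-bound} in place of \cite[Theorem 9]{deshouillers1982kloosterman}. First, using the congruence $d_1 k \equiv d_2 n\ell \pmod r$ (from the constraint $d_1 k - d_2 n\ell = r e d_\Delta t$) together with Bezout's identity (\cref{lem:bezout}), I would rewrite
\[
    \e\!\left(ah\frac{\bar{bk}}{r}\right)
    = \e\!\left(\frac{ah \bar{b}d_1}{r d_2 n\ell}\right)\, \e\!\left(-ah\bar{b} e d_\Delta t \frac{\bar{k}}{d_2 n\ell}\right),
\]
where $\bar{k}$ denotes the inverse modulo $d_2 n\ell$. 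The first factor has argument $O(x^{O(\eta)-1})$ and can be absorbed into the smooth weight, while the parametrization $r = (d_1 k - d_2 n\ell)/(e d_\Delta t)$ replaces the $r$-summation by one over a small parameter $|t| \ll x^{O(\eta)}$, flipping the critical modulus from $R$ to $NL$.

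\textbf{Cauchy--Schwarz and completion.} Next, I would apply Cauchy--Schwarz in the variables $(e, t, n, k)$, keeping $\ell$ and $h$ inside the square and preserving the congruence modulo $t$. Expanding the square introduces pairs $(\ell_1, \ell_2)$ with $\ell_1 \equiv \ell_2 \pmod t$ and $(h_1, h_2)$, leaving an exponential phase of the shape $\e(t(h_1 \ell_2 - h_2 \ell_1) \overline{n \ell_1 \ell_2}/k)$. Completing the $n$-summation via \cref{lem:completion} introduces complete Kloosterman sums modulo $d_2 n\ell$. The resulting sum splits into three pieces: (a) the diagonal $h_1 \ell_2 = h_2 \ell_1$, bounded trivially, with the deamplification factor $E$ from \cref{prop:setup} saving a crucial power of $x$; (b) the zeroth Fourier coefficient and the $\ell_1 = \ell_2$ contributions, handled via \cref{lem:weil} and \cref{lem:incomplete-weil}; and (c) the main off-diagonal piece, where after fixing $t$ and the residues $\ell_i \pmod t$ (at a multiplicative cost of only $x^{O(\eta)}$), the phase $\e(j\omega)$ has $\omega$ \emph{independent} of $(\ell_1,\ell_2)$---exactly what \cref{thm:di-type-bound} requires.

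\textbf{Main obstacle.} The main difficulty lies in the intricate bookkeeping needed to balance all contributions. Concretely, I would need to (i) choose $E$ within the window prescribed by \cref{eq:E-conditions} so that the diagonal gain from deamplification matches the off-diagonal loss from the Kloosterman bound; (ii) match the parameters $(M,N,R,S,C)$ of \cref{thm:di-type-bound} so that $C/(R\sqrt{S}) \asymp N/L$, which is essential because it makes the $\theta_{\max}$-loss equal to $(N/L)^{\theta_{\max}}$ rather than the much larger $x^{\theta_{\max}/2}$; and (iii) verify that the auxiliary parameters $d_a, d_1, d_2, d_\Delta, v, \delta_1, \delta_2 \ll x^{O(\eta)}$ introduce only losses of size $x^{O(\eta)}$, thanks to the coprimality and smoothness conditions $d_1 d_2 d_\Delta \mid b^\infty$. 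The condition $N^{12-6\theta}L^{11-6\theta} \ll x^{6-4(\theta+\eps)}R^2$ from \cref{eq:conditions} is precisely the inequality that arises from this balancing and yields the claimed bound $x^{O(\eta) - \eps/4}K^2/R$, once $\eta$ is taken small enough in terms of $\eps$.
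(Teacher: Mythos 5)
Your overall plan is aligned with the paper's (flip the modulus via Bezout, parametrize $r$ by the small variable $t$, Cauchy--Schwarz, complete to Kloosterman sums, split diagonal/$\ell_1{=}\ell_2$/$\ell_1{\ne}\ell_2$, and invoke \cref{thm:di-type-bound}), but the central identity you write down is wrong. The paper's flip uses the three-term Bezout relation $\frac{\overline{bk}}{r}+\frac{\overline{rb}}{k}+\frac{\overline{rk}}{b}\equiv\frac{1}{bkr}\pmod 1$ and then reduces $\bar r$ modulo $k$ via the congruence $red_\Delta t\equiv -d_2 n\ell\pmod k$, producing an exponential with $k$ in the \emph{denominator}: $\e\bigl(\pm ahed_\Delta t\,\overline{bd_2 n\ell}/k\bigr)$, plus a genuinely tiny term $\e(ah/(bkr))$ and a term $\e(\pm ah\,\overline{rk}/b)$ that is handled later by fixing residues modulo $b$ and $d_\Delta b$. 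Your formula instead puts $d_2 n\ell$ in the denominator. This fails for at least three reasons: (i) it implicitly needs $\overline{d_1 k}$ modulo $d_2 n\ell$, i.e.\ $(d_1, d_2 n\ell)=1$, which is false whenever $v=(d_1,d_2)>1$ (a case that must be handled, since $v$ can be as large as $x^{\eta/2}$ after \cref{prop:bad-pairs}); (ii) with $\bar b$ taken modulo $r$, your ``first factor'' $\e(ah\bar b d_1/(rd_2 n\ell))$ has argument of size $\asymp x^{O(\eta)}R/x$, which for $R\approx x^{3/5}$ is nowhere near $O(x^{O(\eta)-1})$, so it cannot be absorbed into a smooth weight — and it still depends nontrivially on $n,\ell,r,h$; and (iii) with $n$ sitting in the modulus of your exponential, ``completing the $n$-summation via \cref{lem:completion}'' makes no sense — Poisson completion applies to a variable appearing (inversely) in the \emph{argument}, not in the modulus. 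Indeed your next sentence tacitly switches back to the paper's arrangement, writing $\e\bigl(t(h_1\ell_2-h_2\ell_1)\overline{n\ell_1\ell_2}/k\bigr)$, but then claims the Kloosterman sums land ``modulo $d_2 n\ell$'' — they are modulo $k$ (more precisely $k's$).

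There are two further deviations worth flagging. First, you apply Cauchy--Schwarz in $(e,t,n,k)$, but the paper keeps $e$ \emph{inside} the square (Cauchy--Schwarz only over $k, n', t'$): this is what produces the split $e=e_0e_1'$, $e=e_0e_2'$ in \cref{lem:splitting-bfi} and enlarges the ``level'' in the DI-type bound from $\asymp L^2$ to $\asymp L^2E^2$ — another way the deamplification is exploited. Second, your ``fix $t$ and the residues $\ell_i\pmod t$'' is not the mechanism used: what is fixed is the residue $\hat\mu$ of $\mu(\ell_1,\ell_2)$ modulo $te_0e_1'e_2'$ (and earlier, residues of $e,t,k,n,\ell$ modulo $b$ and $d_\Delta b$), precisely so that the phase $\omega$ in $\e(j\omega)$ becomes independent of the level variables before \cref{thm:di-type-bound} is invoked. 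You are also silent on the separation-of-variables step (partial summation in $e,\ell,h$ using derivative bounds for the smooth weight), which is needed to detach the $k$- and $n$-dependent amplitude from the inner sum before Cauchy--Schwarz. These are not just bookkeeping: without the correct modulus flip and the $\hat\mu$-fixing, the coefficients passed to \cref{thm:di-type-bound} would still depend on the level $\ell_1\ell_2$, and the whole point of the optimization would be lost.
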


\begin{remark}
The exponential sum from \cref{prop:drappeau-expo} is essentially that anticipated in \cref{eq:sketch-main-exponential}.
\end{remark}

\begin{proof}[Proof of \cref{prop:dispersion-sums}.$(i)$, assuming \cref{prop:drappeau-expo}]
Let $\eps \in (0, 1)$ be sufficiently small, and let us pick $E$ as in \cref{prop:drappeau-expo}. By \cref{prop:simplification}, it remains to establish the bound
\[
\begin{aligned}
    \mD_1 := \sum_{e \in \msE} \sum_{(r, a_1a_2) = 1} \Phi\left(\frac{r}{R}\right) \frac{1}{r}
    \sum_{\substack{(k_1, k_2) \in \mK(\eta) \\ (k_1 k_2, re) = 1 \\ k_1 \equiv k_2 \pmod{re}}} u_{k_1} \bar{u_{k_2}} 
    \sum_{1 \le |h| \le H} \hat{\Phi}\left(\frac{hM}{r}\right) \e\left(a_1 h \frac{\bar{a_2k_1}}{r} \right)
    \\
    \ll_{\eps,\eta}
    x^{-2\delta} \frac{K^2}{R},
\end{aligned}
\]
for some choice of $0 < 8\delta \le \eta \le \eps$ in terms of $\eps$ (since $\delta \le \eta/8$, the error term of $x^{-\eta/4} K^2 / R$ from \cref{prop:simplification} is acceptable). For now, let us fix $\delta$ and $\eta$ such that $8\delta \le \eta$; we will give explicit choices at the end of this proof.

By the definition of $\mK(\eta)$ from \cref{eq:bad-pairs}, we may consider the $x^{\eta}$-bounded variables 
\[
    d_1 := (k_1, (a_2 k_2)^\infty), \qquad d_2 := (k_2, (a_2 k_1)^\infty), \qquad d_\Delta := (k_1 - k_2, (a_2 k_1 k_2)^\infty),
\]
\begin{equation} \label{eq:v}
    v := (k_1, k_2) = (d_1, d_2) = (d_1, d_\Delta) = (d_2, d_\Delta).
\end{equation}
Noting that $d_1$, $d_2$ and $d_\Delta$ all divide $(a_2 v)^\infty$, we may then expand
\begin{equation} \label{eq:D12}
    \mD_1 = \sum_{\substack{d_1, d_2, d_\Delta \le x^{\eta} \\ v = (d_1, d_2) = (d_1, d_\Delta) = (d_2, d_\Delta) \\ d_1 d_2 d_\Delta \mid (a_2v)^\infty}} \mD_2(d_1, d_2, d_\Delta, v),
\end{equation}
where
\[
    \mD_2 := \sum_{e \in \msE} \sum_{(r, a_1 a_2) = 1} \Phi\left(\frac{r}{R}\right) \frac{1}{r}
    \sum_{\substack{(k_1, (a_2 k_2)^\infty) = d_1 \\ (k_2, (a_2 k_1)^\infty) = d_2 \\ (k_1 - k_2, (a_2 k_1 k_2)^\infty) = d_\Delta \\ |k_1 - k_2| > K/x^\eta \\ (k_1 k_2, re) = 1 \\ k_1 \equiv k_2 \pmod{re}}}
    u_{k_1} \bar{u_{k_2}}
    \sum_{1 \le |h| \le H} \hat{\Phi}\left(\frac{hM}{r}\right) \e\left(a_1 h \frac{\bar{a_2 k_1}}{r} \right).
\]
Changing variables $k_i \mapsto d_i k_i$ and adjusting coprimality conditions accordingly (e.g., we now have $(k_1, a_2d_2k_2) = 1$ as well as $(k_1, d_1) = 1$, and $(d_1k_1d_2k_2, re) = 1$), we get
\[
\begin{aligned}
    &\mD_2 
    \\
    &= 
    \sum_{e \in \msE} \sum_{(r, a_1 a_2) = 1} \Phi\left(\frac{r}{R}\right) \frac{1}{r}
    \sum_{\substack{(k_1, a_2 d_1 d_2 k_2) = 1 \\ (k_2, a_2 d_1 d_2 k_1) = 1 \\ (d_1 k_1 - d_2 k_2, (a_2 d_1 d_2 k_1 k_2)^\infty) = d_\Delta \\ |d_1 k_1 - d_2 k_2| > K/x^{\eta} \\ (d_1 k_1 d_2 k_2, re) = 1 \\ d_1 k_1 \equiv d_2 k_2 \pmod{re}}} u_{d_1k_1} \bar{u_{d_2k_2}} 
    \sum_{1 \le |h| \le H} \hat{\Phi}\left(\frac{hM}{r}\right) \e\left(a_1 h \frac{\bar{a_2d_1k_1}}{r} \right)
    \\
    &=
    \sum_{\substack{e \in \msE \\ (r, a_1 a_2) = 1 \\ (re, d_1 d_2) = 1}} \Phi\left(\frac{r}{R}\right) \frac{1}{r}
    \sum_{\substack{(k_1, k_2) = 1 \\ (k_1 k_2, re a_2 d_1 d_2) = 1 \\ (d_1 k_1 - d_2 k_2, (a_2 d_1 d_2)^\infty) = d_\Delta \\ |d_1 k_1 - d_2 k_2| > K/x^\eta \\ d_1k_1 \equiv d_2k_2 \pmod{re}}} u_{d_1k_1} \bar{u_{d_2k_2}} 
    \sum_{1 \le |h| \le H} \hat{\Phi}\left(\frac{hM}{r}\right) \e\left(a_1 h \frac{\bar{a_2d_1k_1}}{r} \right).
\end{aligned}
\]
Let us denote
\[
    a := a_1 \sgn(a_2)
    \qquad\qquad \text{and} \qquad\qquad 
    b := |a_2| d_1
\] 
for convenience. 
At this point we record that, since $d_1d_2d_\Delta \mid (a_2v)^\infty$, $v \mid d_1$, and $a_1, a_2 \ll x^{\delta} \le x^\eta$ by \cref{eq:conditions}, we have
\[
    a_2 d_1 d_2 d_\Delta \mid b^\infty 
    \qquad\qquad 
    \text{and} 
    \qquad\qquad 
    a b d_1 d_2 d_\Delta v \ll x^{O(\eta)},
\]
as needed in \cref{prop:drappeau-expo} (in particular, $b$ will act as a bookkeeper for the prime factors of $d_1, d_2, d_\Delta$, $a_2$ inside coprimality constraints). 
Recalling that we chose $\mE = \{e \sim E : e \text{ prime}\}$ in \cref{eq:averaging-set}, we can ensure that $(e, a_2d_1) = (e, b) = 1$ for $e \in \mE$ by enforcing $\delta < 4\eps$ (since then $|a_2| \le x^\delta \le x^{4\eps} \le E$).
Writing $d_1k_1 - d_2k_2 = red_\Delta t$, where $(t, a_2d_1d_2k_1k_2) = 1$ (which is further absorbed by the conditions $(t, b) = (k_1, k_2) = (k_1 k_2, b) = 1$), we further get
\[
    \mD_2 = \sum_{\substack{e \sim E \text{ prime} \\ (e, b) = 1}} \sum_{(r, ab) = 1} \Phi\left(\frac{r}{R}\right) \frac{1}{r}
    \sum_{\substack{(k_1, k_2) = 1 \\ (k_1 k_2, re b) = 1 \\ d_1k_1 - d_2k_2 = red_\Delta t \\ (t, b) = 1,\  red_\Delta |t| > K/x^\eta}} u_{d_1k_1} \bar{u_{d_2k_2}} 
    \sum_{1 \le |h| \le H} \hat{\Phi}\left(\frac{hM}{r}\right) \e\left(a h \frac{\bar{bk_1}}{r} \right).
\]

We can also get rid of the restriction $(r, a) = 1$ using M\"obius inversion, by writing $\one_{(r, a) = 1} = \sum_{d_a \mid a} \mu(d_a) \one_{d_a \mid r}$ and expanding
\begin{equation} \label{eq:D23}
    \mD_2(d_1, d_2, d_\Delta, v) = \sum_{d_a \mid a} \mu(d_a)\, \mD_3(d_1, d_2, d_\Delta, v, d_a),
\end{equation}
where
\[
    \mD_3 := \sum_{\substack{e \sim E \text{ prime} \\ (e, b) = 1}} \sum_{\substack{(r, b) = 1 \\ r \equiv 0 \pmod{d_a}}} \Phi\left(\frac{r}{R}\right) \frac{1}{r}
    \sum_{\substack{(k_1, k_2) = 1 \\ (k_1 k_2, re b) = 1 \\ d_1k_1 - d_2k_2 = red_\Delta t \\ (t, b) = 1,\  red_\Delta |t| > K/x^\eta}} u_{d_1k_1} \bar{u_{d_2k_2}} 
    \sum_{1 \le |h| \le H} \hat{\Phi}\left(\frac{hM}{r}\right) \e\left(a h \frac{\bar{bk_1}}{r} \right).
\]
Finally, using the definition of $(u_k)$ from \cref{eq:u-def} and the fact that $(d_2, k_2) \mid (b, k_2) = 1$, we can expand
\[
    u_{d_2 k_2} = \sum_{\delta_1 \delta_2 = d_2} \sum_{n\ell = k_2} \beta_{\delta_1 n} \gamma_{\delta_2 \ell},
\]
and thus
\begin{equation} \label{eq:D34}
    \mD_3(d_1, d_2, d_\Delta, v, d_a) = \sum_{\delta_1 \delta_2 = d_2} \mD_4(d_1, d_2, d_\Delta, v, d_a, \delta_1),
\end{equation}
where
\[
\begin{aligned}
    &\mD_4 \\
    &:= \sum_{\substack{e \sim E \text{ prime} \\ (e, b) = 1}} \sum_{\substack{(r, b) = 1 \\ r \equiv 0 \pmod{d_a}}} \Phi\left(\frac{r}{R}\right) \frac{1}{r} 
    \sum_{\substack{(k, n\ell) = 1 \\ (k n \ell, re b) = 1 \\ d_1k - d_2n \ell = red_\Delta t \\ (t, b) = 1,\  red_\Delta |t| > K/x^\eta}} u_{d_1k} \bar{\beta_{\delta_1 n} \gamma_{\delta_2 \ell}} 
    \sum_{1 \le |h| \le H} \hat{\Phi}\left(\frac{hM}{r}\right) \e\left(a h \frac{\bar{bk}}{r} \right).
\end{aligned}
\]
We can now apply \cref{prop:drappeau-expo} for the sequences
\[
    u'_k := u_{d_1 k},
    \qquad\quad 
    \beta'_n := \bar{\beta_{\delta_1 n}},
    \qquad\quad
    \gamma'_\ell := \bar{\gamma_{\delta_2 \ell}},
\]
supported in $k \asymp K/d_1$, $n \sim N/\delta_1$, and $\ell \sim L/\delta_2$ respectively, to get
\begin{equation} \label{eq:D4-bound}
    \mD_4 \ll_{\eta,\eps} x^{O(\eta) - \eps/4} \frac{K^2}{R}.
\end{equation}
Putting together \cref{eq:D12,eq:D23,eq:D34,eq:D4-bound}, we obtain
\begin{equation} \label{eq:D1-bound}
    \mD_1 \ll_\eta x^{O(\eta)} \max_{d_1,d_2,d_\Delta,v,d_a,\delta_1} |\mD_4| \ll_{\eps, \eta} x^{O(\eta) - \eps/4} \frac{K^2}{R},
\end{equation}
where the maximum includes all applicable restrictions on the tuple $(d_1,d_2,d_\Delta,v,d_a,\delta_1)$ (which takes at most $O_\eta(x^{O(\eta)})$ values).

Now let $C > 0$ denote the absolute constant in the final exponent of $O(\eta)$ from \cref{eq:D1-bound}, and let us pick 
\[
    \eta = \eta(\eps) := \min(\eps/(8C), \eps),
    \qquad\qquad 
    \delta = \delta(\eps) := \min(\eps/16, \eta/8).
\] 
Then we have $0 < 8\delta \le \eta \le \eps$ as desired, and the bound in \cref{eq:D1-bound} implies
\[
    \mD_1 \ll_{\eps, \eta} x^{\eps/8 - \eps/4} \frac{K^2}{R}
    \le 
    x^{-2\delta} \frac{K^2}{R},
\]
completing our proof.
\end{proof}

Finally, we prove \cref{prop:drappeau-expo} assuming the following BFI-style bound, the proof of which is left to the later sections. This should be compared to Maynard's \cite[Lemma 18.5]{maynard2025primes}.

\begin{proposition}[Improved BFI/Maynard exponential sum bound] \label{prop:bfi-expo}
For all sufficiently small $\eps > 0$ and all $\eta \in (0, 1)$, the following holds. Under the conditions in \cref{eq:conditions}, there exists $E$ satisfying \cref{eq:E-conditions}, such that for any positive integers $b, d$ with $b \ll x^{O(\eta)}$ and $d \ll x^{O(1)}$, and for any parameters $K' \ll NL x^{O(\eta)}$, $N' \asymp N x^{O(\eta)}$, $L' \asymp L x^{O(\eta)}$, $T' \ll NL(RE)^{-1} x^{O(\eta)}$, $H' \ll R M^{-1} x^{O(\eta)}$, one has
\begin{equation} \label{eq:bfi-expo}
\begin{aligned}
    \sum_{\substack{k \sim K' \\ (k, b) = 1}} 
    \sum_{\substack{n \sim N' \\ (n, k) = 1}} 
    \sum_{\substack{t \sim T' \\ (t, dk) = 1}} t \left\vert
    \sum_{\substack{e \sim E \\ (e, dk) = 1}}
    \sum_{h \sim H'} 
    \sum_{\substack{\ell \sim L' \\ (\ell, ket) = 1 \\ n \ell \equiv d k \pmod{et}}} 
    \beta(e, h, \ell)\ \e\left(het \frac{\bar{b n \ell}}{k}\right)
    \right\vert^2
    \\
    \ll_{\eps, \eta} x^{O(\eta) - \eps/2} N^2 L^3,
\end{aligned}
\end{equation}
for any $1$-bounded complex coefficients $\beta(e, h, \ell)$ (independent of $k, n, t$). 
\end{proposition}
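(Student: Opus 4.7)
The plan is to carry out the outline sketched in \cref{subsec:limitations} in full detail, building on Maynard's Cauchy--Schwarz arrangement from \cite[\S 18]{maynard2020primes} while exploiting the level-dependence present in \cref{thm:di-type-bound}. First I would expand the square in \cref{eq:bfi-expo}, introducing pairs $(e_i, h_i, \ell_i)$ for $i=1,2$. Combining the two congruences $n\ell_i \equiv dk \pmod{e_i t}$ (with $(n,t)=1$) forces $\ell_1 \equiv \ell_2 \pmod{t}$ and pins down $n \bmod t$ as a function of $dk\bar{\ell_1}$. Applying \cref{lem:bezout} to flip $\bar{bn\ell}/k$ into $\bar{k}/(bn\ell) + \bar{bn\ell k}/1$ lets us extract the congruence on $n$ and produces a variant of the ``key sum'' from \cref{eq:sketch-kloosterman}.

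Next I would separate three regimes. The diagonal contribution with $h_1 e_1 \ell_2 = h_2 e_2 \ell_1$ is bounded trivially as in \cref{eq:sketch-diagonal-terms}; this is exactly the step that gains a factor of $1/E$ from the deamplification, which is why \cref{eq:E-conditions} imposes $E \gg x^{4\eps}$. The contribution of $\ell_1 = \ell_2$ (without the diagonal constraint on the $h_ie_i$) is estimated by completing the sum over $n$ via \cref{lem:completion} and invoking \cref{lem:weil} on the resulting Kloosterman sums, essentially as in \cite[Theorem 9]{deshouillers1982kloosterman}.

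The main work lies in the genuine off-diagonal regime $h_1 e_1 \ell_2 \neq h_2 e_2 \ell_1$ and $\ell_1 \neq \ell_2$. Here I would complete the sum over $n$ via \cref{lem:completion}, dispose of the zero-frequency contribution using the Ramanujan case of \cref{lem:weil}, and rewrite the nonzero-frequency remainder as a sum of complete Kloosterman sums $S((h_1 e_1 \ell_2 - h_2 e_2 \ell_1)\bar{b\ell_1 \ell_2},\, j;\, k)$ weighted by a phase of the shape $\e(jdk\bar{\ell_1}/t)$. Since this phase depends on $\ell_1$, the crucial manoeuvre — the one that allows improvement on \cite{maynard2020primes} — is to fix $t$ together with the residue classes of $\ell_1, \ell_2, e_1, e_2, h_1, h_2$ modulo $t$, losing only a factor of $x^{O(\eta)}$, after which the phase becomes independent of the remaining $(\ell_1,\ell_2)$ and these range densely inside a box of size $\asymp (L'/t)^2$. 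Introducing $d' := h_1 e_1 \ell_2 - h_2 e_2 \ell_1$ and collecting the induced coefficients as in \cref{eq:sketch-kloosterman-2} puts us in position to apply \cref{thm:di-type-bound} with level $s \asymp \ell_1\ell_2/(\ell_1,\ell_2)$.

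The main obstacle will be the final accounting: verifying that, with the parameters of \cref{thm:di-type-bound} chosen to match our ranges (essentially $R \asymp 1$, $S \asymp L^2$, $C \asymp K' \asymp NL$, and $M, N$ given by the $d'$- and $j$-ranges), the resulting estimate beats $x^{-\eps/2} N^2 L^3$ under the hypotheses of \cref{eq:conditions}. The improvement over \cite[Lemma 18.5]{maynard2020primes} is that the exceptional-eigenvalue factor becomes $(C/(R\sqrt{S}))^{\theta_{\max}} \asymp N^{\theta_{\max}}$ rather than $C^{\theta_{\max}/2}$; unpacking this yields precisely the constraints $N^{12-6\theta}L^{11-6\theta} \ll x^{6-4(\theta+\eps)} R^2$ and $N^4 L^{7}\max(1,N/L)^{2\theta} \ll x^{2-16\eps} R^2$ appearing in \cref{eq:conditions}, and the path from these inequalities back to the target bound will require a careful case split according to the relative sizes of $N$, $L$, and $RE$.
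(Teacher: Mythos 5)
Your high‐level plan — expand the square, separate a diagonal (using the deamplification gain from $E$), complete the $n$-sum to get Kloosterman sums, discard $j=0$ via Ramanujan/Weil, and then feed the off-diagonal into the level-averaged DI bound from \cref{thm:di-type-bound} — is the right one and matches the paper's strategy. But several steps as you describe them would not go through.

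First, the B\'ezout reciprocity is misplaced: the exponential $\e(het\,\bar{bn\ell}/k)$ is already in the form $\e(\text{const}\cdot\bar n/k)$, and the constraint on $n$ (namely $n\equiv dk\bar{\ell_i}\pmod{te_i}$, with modulus coprime to $k$) is exactly what \cref{lem:completion} handles — no flipping is needed at this stage; the paper already performed the reciprocity when reducing to this sum in the proof of \cref{prop:drappeau-expo}. Second, the contribution of $\ell_1=\ell_2$ is \emph{not} controlled by the Weil bound (which gives no cancellation in the $j$- and $k$-averages); the paper applies \cref{thm:di-type-bound} to this piece as well (\cref{lem:split-bfi-eq}), and indeed the resulting $\theta$-dependent factor from $\mB_=$ is responsible for the constraint $N^9L^8\ll x^3R^4$ in \cref{eq:conditions}. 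Third, and most seriously, the claim that one can "fix $t$ together with the residue classes of $\ell_1,\ell_2,e_1,e_2,h_1,h_2$ modulo $t$, losing only a factor of $x^{O(\eta)}$" does not hold in the generality required: here $t\sim T'\ll NL(RE)^{-1}x^{O(\eta)}$, and since $R\ll x^{-5\eps}NL$ and $E\ll x^{-\eps}NLR^{-1}$, the range $T'$ typically lies between roughly $x^{\eps}$ and $x^{1/6}$, which is far larger than $x^{O(\eta)}$ when $\eta\ll\eps$. The paper handles this by \emph{not} fixing these data: after writing $e_0=(e_1,e_2)$, $e_i'=e_i/e_0$, the only thing that must be pinned down is the residue $\hat\mu$ of $\mu(\ell_1,\ell_2)$ modulo $te_0e_1'e_2'$ (see \cref{lem:splitting-bfi,lem:split-bfi-neq}); the variables $t,e_0,e_1',e_2',\hat\mu$ are kept as outer summation variables, \cref{thm:di-type-bound} is applied inside with coefficients $a_{m,r,s}$ encoding the constraint $\mu\equiv\hat\mu$, and the outer sum of $\|a_{m,r,s}\|_2$ is controlled by Cauchy--Schwarz exploiting the disjoint support of the coefficients — this is where the loss of roughly $TE^2/E_0$ is absorbed, rather than being waved away as $x^{O(\eta)}$. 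You also identify the DI-parameters as $R\asymp 1$, $S\asymp L^2$; the paper has $r\asymp b\ell_0\ell_1'\ell_2'e_1'e_2'\asymp L^2E^2/(L_0E_0^2)$ (the level), $s\ll TE_0$ (from the M\"obius inversion removing $(k,te_0)=1$), and $c\asymp K/s$, which gives a $\theta$-factor of order $(1+K/(L^2E^2))^{\theta_{\max}}$ rather than $N^{\theta_{\max}}$.

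Finally, you omit two structural steps needed to make the argument rigorous: the M\"obius inversion used to remove the coprimality $(k,te_0)=1$ (which introduces the $s$-variable just mentioned), and the separation of the $k'$- and $j$-variables from the rest via the multiplicative convolution structure of the smooth weight $f_0$ (needed so that \cref{thm:di-type-bound} applies with a genuine smooth function $g_0(k'/(K/S))$ that does not secretly depend on the other variables). These are fiddly but not optional; without them, the coefficient $a_{m,r,s}$ is not of the form required by \cref{thm:di-type-bound}.
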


\begin{remark}
The exponential sum from \cref{prop:bfi-expo} is essentially anticipated in \cref{eq:sketch-bfi-style,eq:sketch-bfi-style-2}.
\end{remark}

\begin{proof}[Proof of \cref{prop:drappeau-expo}, assuming \cref{prop:bfi-expo}]
Let us denote the sum in \cref{prop:drappeau-expo} by $\mD_4$, and assume without loss of generality that $(d_a, b) = 1$ (since otherwise $\mD_4$ vanishes). We choose $E$ as in \cref{prop:bfi-expo}, and take a closer look at the exponential: by iterating \cref{lem:bezout}, since $b, k, r$ are pairwise coprime we have
\[
    \frac{\bar{bk}}{r} + \frac{\bar{rb}}{k} + \frac{\bar{rk}}{b} \equiv \frac{1}{bkr} \quad \pmod{1},
\]
and thus, using that $re d_\Delta t \equiv -d_2 n \ell \pmod{k}$ and $d_1d_2d_\Delta \mid b^\infty$ (so in particular $(d_2, k) = 1$),
\[
    ah\frac{\bar{b k}}{r} \equiv 
    ah e d_\Delta t \frac{\bar{b d_2n\ell}}{k} - ah\frac{\bar{rk}}{b} + \frac{ah}{b k r} \quad \pmod{1}.
\]
Since $|a| \ll x^{O(\eta)}$, $h \ll x^\eta R M^{-1}$, $kr \gg KR/x^{O(\eta)}$ and $KM \asymp x$, we obtain
\[
    \e\left(ah \frac{\bar{b k}}{r} \right)
    =
    \e\left(ahe d_\Delta t \frac{\bar{b d_2n \ell}}{k} - ah\frac{\bar{rk}}{b} \right)
    +
    O(x^{O(\eta)-1}),
\]
and thus
\[
\begin{aligned}
    \mD_4 =
    &\sum_{\substack{e \sim E \text{ prime} \\ (e, b) = 1}} \sum_{\substack{(r, b) = 1 \\ r \equiv 0 \pmod{d_a}}} \Phi\left(\frac{r}{R}\right) \frac{1}{r} 
    \sum_{\substack{(k, n\ell) = 1 \\ (k n \ell, reb) = 1 \\ d_1k - d_2n \ell = red_\Delta t \\ (t, b) = 1,\  red_\Delta |t| > K/x^\eta}} u'_k \beta'_n \gamma'_\ell
    \\
    &\times 
    \sum_{1 \le |h| \le H} \hat{\Phi}\left(\frac{hM}{r}\right) \e\left(ahe d_\Delta t \frac{\bar{b d_2n \ell}}{k} - ah\frac{\bar{rk}}{b} \right)
    + O_\eta\left(E\frac{R}{R}NL \left(\frac{K}{RE} + 1\right) H x^{O(\eta) - 1} \right).
\end{aligned}
\]
Recalling that $K \gg RE$ (due to \cref{eq:E-conditions}), $H = x^\eta RM^{-1}$, $MK \asymp x$, and $KR \ll x^{2-\eps}$ (again by \cref{eq:conditions}), the error term gives an acceptable contribution of
\[
    \ll_\eta \frac{EK^2 H x^{O(\eta) - 1}}{RE} 
    \ll K^3 x^{O(\eta) - 2}
    \ll x^{O(\eta)-\eps} \frac{K^2}{R}.
\]
We now change variables in the main term by replacing the $r$-summation with a summation over 
\[
    t := \frac{d_1 k - d_2 n \ell}{red_\Delta},
\]
noting that the condition $red_\Delta |t| > K/x^\eta$ implies $|t| > K/(RE x^{O(\eta)})$. We also put $|t|$ into dyadic intervals $|t| \sim T$ to obtain
\begin{equation} \label{eq:D45}
    \mD_4 \ll (\log x) \sup_{T \asymp x^{O(\eta)} K/(RE)} |\mD_5(T)|
    +
    O_\eta\left(x^{O(\eta) - \eps} \frac{K^2}{R}\right),
\end{equation}
where, after adjusting coprimality conditions as explained below,
\[
\begin{aligned}
    \mD_5 := \sum_{\substack{e \sim E \text{ prime} \\ (e, b) = 1}}
    \sum_{\substack{|t| \sim T \\ (t, b) = 1}}\,
    \sum_{\substack{(k, n\ell) = (k n \ell, d_aebt) = 1 \\ d_1k \equiv d_2n \ell \pmod{d_aed_\Delta t} \\ |d_1k - d_2n \ell| > K/x^\eta \\ r := (d_1 k - d_2 n \ell)/(ed_\Delta t) \\ (r, b) = 1}} 
    &\Phi\left(\frac{r}{R}\right) \frac{1}{r} 
    u'_k \beta'_n \gamma'_\ell 
    \\
    &\times \sum_{1 \le |h| \le H} \hat{\Phi}\left(\frac{hM}{r}\right) \e\left(ahe d_\Delta t \frac{\bar{b d_2n \ell}}{k} - ah\frac{\bar{rk}}{b} \right).
\end{aligned}
\]
(We inserted the condition $(kn\ell, t) = 1$; this must happen since $t \mid d_1 k - d_2 n \ell$ and $(t, d_1 d_2) \mid (t, b^\infty) = 1$; if a prime divides both $t$ and one of $k$ and $n\ell$, then it must also divide the other, contradicting $(k, n\ell) = 1$. Moreover, the conditions in the sum over $k, n, \ell$ are enough to imply $(kn\ell, r) = 1$, since $(d_1 k - d_2 n\ell, k) = (d_2 n\ell, k) = (d_2, k) \mid (b^\infty, k) = 1$ and similarly $(d_1 k-d_2 n\ell, n\ell) = 1$.)

We now aim to simplify the term $ah \bar{rk}/b$ from the exponential, by fixing all relevant residues modulo $b$. With this goal, we denote the residues of $e, t$ modulo $b$ by $\hat{e}, \hat{t}$, and those of $k, n, \ell$ modulo $d_\Delta b$ by $\hat{k}, \hat{n}, \hat{\ell}$. Since $(d_1 k - d_2n\ell)/d_\Delta = ret$ is coprime with $b$, we must have $d_1 \hat{k} - d_2 \hat{n} \hat{\ell} \in d_\Delta (\Z/b\Z)^\times$ $= \{d_\Delta (n + b \Z) : (n, b) = 1\} \subset \Z/d_\Delta b \Z$. This allows us to expand $\mD_5$ as
\begin{equation} \label{eq:D56}
    \mD_5(T) = \sum_{\substack{\hat{e}, \hat{t} \in (\Z/b\Z)^\times}} \sum_{\substack{\hat{k}, \hat{n}, \hat{\ell} \in \Z/d_\Delta b\Z \\ (\hat{k} \hat{n} \hat{\ell}, b) = 1 \\ 
    d_1 \hat{k} - d_2 \hat{n} \hat{\ell} \in d_\Delta (\Z/b\Z)^\times}}
    \mD_6\left(T, \hat{e}, \hat{t}, \hat{k}, \hat{n}, \hat{\ell}\right),
\end{equation}
with
\[
\begin{aligned}
    \mD_6 := \sum_{\substack{e \sim E \text{ prime} \\ e \equiv \hat{e} \pmod{b}}}
    \sum_{\substack{|t| \sim T \\ t \equiv \hat{t} \pmod{b}}}\,
    &\sum_{\substack{(k, n, \ell) \equiv (\hat{k}, \hat{n}, \hat{\ell}) \pmod{d_\Delta b} \\ (k, n\ell) = (k n \ell, d_aet) = 1 \\ d_1k \equiv d_2n \ell \pmod{d_aed_\Delta t} \\ |d_1k - d_2n \ell| > K/x^\eta \\ r := (d_1 k - d_2 n \ell)/(ed_\Delta t)}} 
    \Phi\left(\frac{r}{R}\right) \frac{1}{r} 
    u'_k \beta'_n \gamma'_\ell 
    \\
    &\times \sum_{1 \le |h| \le H} \hat{\Phi}\left(\frac{hM}{r}\right) \e\left(ahe d_\Delta t \frac{\bar{b d_2n \ell}}{k} - ah\frac{\bar{\hat{r}\hat{k}}}{b} \right),
\end{aligned}
\]
where $\hat{r} = \hat{r}(\hat{e}, \hat{t}, \hat{k}, \hat{n}, \hat{\ell}) \in (\Z/b\Z)^\times$ is the unique residue mod $b$ such that $d_\Delta \hat{r} \hat{e} \hat{t} = d_1 \hat{k} - d_2 \hat{n} \hat{\ell} \in d_\Delta (\Z/b\Z)^\times$ (this $\hat{r}$ is the residue of $r$ mod $b$, and it is fixed inside each $\mD_6$). Denoting $y(h) := \e\left(- ah\bar{\hat{r} \hat{k}}/b \right)$ and suppressing the congruences to $\hat{e}, \hat{t}, \hat{k}, \hat{n}, \hat{\ell}$ through the notation $\sum^*$, we obtain
\begin{equation} \label{eq:before-partial}
\begin{aligned}
    \mD_6 = \psum_{\substack{|t| \sim T \\ (t, b) = 1}}
    \psum_{\substack{(k, n) = 1 \\ (kn, d_abt) = 1}}  u'_k \beta'_n
    \psum_{\substack{e \sim E \text{ prime} \\ (e, knb) = 1}}
    \psum_{\substack{(\ell, d_aekbt) = 1 \\ d_2 n \ell \equiv d_1 k \pmod{d_aed_\Delta t} \\ |d_2 n \ell - d_1 k | > K/x^\eta}}  
    \sum_{1 \le |h| \le H} \gamma'_\ell\ y(h)\ \e\left(ahe d_\Delta t \frac{\bar{b d_2n \ell}}{k}\right)
    \\
    \times \left[ \Phi \left(\frac{d_1 k - d_2 n \ell}{ed_\Delta tR} \right) \frac{e d_\Delta t}{d_1 k - d_2 n \ell} \hat{\Phi}\left(\frac{h e d_\Delta t M}{d_1 k - d_2 n \ell}\right) \right].
\end{aligned}
\end{equation}
We now remove some of the dependencies between the variables $t, k, n$ and $e, \ell, h$, as in the proof of \cite[Lemma 18.4]{maynard2025primes}. Consider the function
\[
\begin{aligned}
    \Psi(e, \ell, h) &:= \Phi \left(\frac{d_1 k - d_2 n \ell}{ed_\Delta tR} \right) \frac{e d_\Delta t}{d_1 k - d_2 n \ell} \hat{\Phi}\left(\frac{h e d_\Delta t M}{d_1 k - d_2 n \ell}\right)
    =
    \frac{1}{R} \alpha\, \Phi\left(\frac{1}{\alpha}\right) \hat{\Phi}\left(\frac{Mh}{R} \alpha \right),
\end{aligned}
\]
where $\alpha := e d_\Delta t R / (d_1 k - d_2 n \ell)$; note that $\Psi$ is smooth in $e, \ell, h$, and nonzero only if $\alpha \asymp 1$. Since $Mh/R \ll x^\eta$ and $\Phi$, $\hat{\Phi}$ have bounded derivatives, the chain rule and the bounds $d_2 n \ell \asymp K$,  $|d_2 n \ell - d_1 k| > K/x^\eta$ imply
\[
\begin{aligned}
    \frac{\partial^{j_1 + j_2 + j_3}}{(\partial e)^{j_1} (\partial (d_1 k - d_2 n \ell))^{j_2} (\partial h)^{j_3}}\Psi(e, \ell, h)
    &\ll_{j_1, j_2, j_3} \frac{x^{\eta(j_1 + j_2 + j_3)}}{R} |e|^{-j_1} |d_1 k - d_2 n \ell|^{-j_2} |h|^{-j_3}
    \\
    &\ll \frac{x^{\eta(j_1 + 2j_2 + j_3)}}{R} |e|^{-j_1} |d_2 n \ell|^{-j_2} |h|^{-j_3}.
\end{aligned}
\]
We thus have
\[
    \frac{\partial^{j_1 + j_2 + j_3}}{(\partial e)^{j_1} (\partial \ell)^{j_2} (\partial h)^{j_3}}\Psi(e, \ell, h)
    \ll_{j_1, j_2, j_3}
    \frac{x^{\eta(j_1 + 2j_2 + j_3)}}{R} |e|^{-j_1} |\ell|^{-j_2} |h|^{-j_3},
\]
and then by partial summation, \cref{eq:before-partial} implies that
\begin{equation} \label{eq:D67}
    \mD_6 \ll_\eta \frac{x^{O(\eta)}}{R} \sup_{\substack{H'' \le H \\ E'' \le 2E,\ L'' \le 2L}} \mD_7,
\end{equation}
where, after removing the residue constraints in the outer sums over $t, k, n$ for an upper bound and putting $|h|$ in a dyadic interval,
\[
    \mD_7 := \sum_{\substack{|t| \sim T \\ (t, b) = 1}} \sum_{\substack{(k, n) = 1 \\ (kn, d_abt) = 1}} |u'_k \beta'_n| \left\vert
    \psum_{\substack{e \sim E \text{ prime} \\ e \le E'' \\ (e, knb) = 1}} 
    \psum_{\substack{(\ell, d_aekbt) = 1,\ \ell \le L'' \\ d_2 n \ell \equiv d_1 k \pmod{d_aed_\Delta t} \\ |d_2 n \ell - d_1 k| > K/x^\eta \\ (d_1 k - d_2 n \ell)/t > 0}} \gamma'_\ell
    \sum_{|h| \sim H''} 
    y(h)\ \e\left(ahe d_\Delta t \frac{\bar{b d_2n \ell}}{k}\right)
    \right\vert.
\]
According to the desired bound in \cref{prop:drappeau-expo}, and in light of \cref{eq:D45,eq:D56,eq:D67}, it remains to show that
\begin{equation} \label{eq:B-bound}
    \mD_7 \ll_{\eps,\eta} x^{O(\eta)-\eps/4} K^2,
\end{equation}
for all $\eta \in (0, 1)$. Now let $\mI(t, k, n)$ be the subinterval of $[L/\delta_2, 2L/\delta_2]$ (which is the support of $\gamma'_\ell$) containing those $\ell$'s such that
\[
    \ell \le L'', \qquad |d_2 n \ell - d_1 k| > K/x^\eta, \qquad 
    \text{and} \qquad 
    (d_1 k - d_2 n \ell)/t > 0.
\]
As in the proof of \cite[Lemma 18.4]{maynard2025primes}, we can remove the constraint $\ell \in \mI(t, k, n)$ using the identity
\[
    \one_{\ell \in \mI(t, k, n)} = 
    \int_0^1 \e(\ell \omega) \sum_{j \in \mI(t, k, n)} \e(-j\omega)\, d\omega
    =
    \int_0^1 \e(\ell \omega) c(t, k, n, \omega)\min(L, \omega^{-1} (1 - \omega)^{-1}) \,d\omega,
\]
for some coefficients $c(t, k, n, \omega) \ll 1$, and the $L^1$ bound $\int_0^{1/2} \min(L, 2\omega^{-1}) d\omega \ll \log x$. Together with the divisor bound $|u'_k| \ll_\eta x^\eta$, this shows that
\[
    \mD_7 \ll_\eta x^{O(\eta)} \sup_{\omega \in \R/\Z} \mD_8,
\]
where 
\[
\begin{aligned}
    &\mD_8 
    \\
    &:= \sum_{\substack{|t| \sim T \\ (t, b) = 1}} \sum_{\substack{k \asymp K/d_1,\ n \sim N/\delta_1 \\ (k, n) = 1 \\ (kn, d_abt) = 1}} \left\vert
    \psum_{\substack{e \sim E \text{ prime} \\ e \le E'' \\ (e, knb) = 1}} 
    \psum_{\substack{(\ell, d_aekbt) = 1 \\ d_2 n \ell \equiv d_1 k \pmod{d_aed_\Delta t}}} \gamma'_\ell\, \e(\ell \omega)
    \sum_{|h| \sim H''} 
    y(h)\ \e\left(ahe d_\Delta t \frac{\bar{b d_2n \ell}}{k}\right)
    \right\vert.
\end{aligned}
\]
We denote $d_i' := d_i/v$ for $i \in \{1, 2, \Delta\}$, so that the exponential term and the congruence in the summation over $\ell$ may be rewritten as 
\[
    \e\left(ahe d_\Delta' t \frac{\bar{b d_2' n \ell}}{k}\right), 
    \qquad\qquad
    d_2' n \ell \equiv d_1' k \pmod{e d_a d_\Delta' t},
\]
where $(d_1', d_2') = (d_1', d_\Delta') = (d_2', d_\Delta') = 1$. At this point, it also makes sense to denote
\[
    h' := \frac{a}{d_a} h, \qquad\quad 
    t' := d_a d_\Delta' t, \qquad\quad 
    n' := d_2' n,
\] 
\[
    H' := \frac{a}{d_a}H'', \qquad\quad 
    T' := d_a d_\Delta' T, \qquad\quad
    N' := \frac{d_2'}{\delta_1}N, \qquad\quad
    L' := \frac{L}{\delta_2}, \qquad\quad 
    K' := \frac{K}{d_1},
\]
to bound (by dropping some divisibility constraints on $n', t'$)
\[
    \mD_8 \ll 
    \sum_{\substack{k \asymp K' \\ (k, b) = 1}} \sum_{\substack{n' \sim N' \\ (n', k) = 1}} \sum_{\substack{|t'| \sim T' \\ (t', d_1'k) = 1}} 
    \left\vert
    \psum_{\substack{e \sim E \text{ prime} \\ e \le E'' \\ (e, kn'b) = 1}} 
    \psum_{\substack{\ell \sim L' \\ (\ell, bket') = 1 \\ n' \ell \equiv d_1' k \pmod{et'}}} \gamma'_\ell\, \e(\ell \omega)
    \sum_{\substack{|h'| \sim H' \\ \frac{a}{d_a} \mid h'}} 
    y(h'd_a/a)\ \e\left(h' e t' \frac{\bar{b n' \ell}}{k}\right)
    \right\vert.
\]
(To verify the new coprimality constraints, recall that $(d_a, b) = 1$ and $d_1' d_2' d_\Delta' \mid b^\infty$.) We may replace the restriction that $(e, n') = 1$ with $(e, d_1') = 1$, since each follows from the other and the congruence $n'\ell \equiv d_1' k \pmod{e}$, where $(\ell k, e) = 1$. Moreover, by inserting $1$-bounded coefficients $\beta(e, h', \ell)$, we can get rid of the coefficients $\gamma'_\ell\, \e(\ell\omega) y(h'd_a/a)$, the residue constraints (modulo $b$) in the summations over $e$ and $\ell$, as well as of the constraints that $e$ is a prime and $e \le E'$, and that $a/d_a \mid h'$. Overall, this yields
\[
    \mD_8 \ll 
    \sum_{\substack{k \asymp K' \\ (k, b) = 1}} \sum_{\substack{n' \sim N' \\ (n', k) = 1}} \sum_{\substack{|t'| \sim T' \\ (t', d_1'k) = 1}} 
    \left\vert
    \sum_{\substack{e \sim E \\ (e, d_1'k) = 1}}
    \sum_{|h'| \sim H'}
    \sum_{\substack{\ell \sim L' \\ (\ell, ket') = 1 \\ n' \ell \equiv d_1' k \pmod{et'}}} 
    \beta(e, h', \ell)\, 
    \e\left(h' e t' \frac{\bar{b n' \ell}}{k}\right)
    \right\vert.
\]
Finally, we insert a factor of $\sqrt{|t'|/T'}$ into the sum, and apply Cauchy--Schwarz in the outer variables $k, n', t'$ to bound
\[
\begin{aligned}
    &\mD_8^2 \ll 
    \\
    &K' N' 
    \sum_{\substack{k \asymp K' \\ (k, b) = 1}} 
    \sum_{\substack{n' \sim N' \\ (n', k) = 1}} 
    \sum_{\substack{|t'| \sim T' \\ (t', d_1' k) = 1}} |t'| \left\vert
    \sum_{\substack{e \sim E \\ (e, d_1'k) = 1}}
    \sum_{|h'| \sim H'}
    \sum_{\substack{\ell \sim L' \\ (\ell, ket') = 1 \\ n' \ell \equiv d_1' k \pmod{et'}}} 
    \beta(e, h', \ell)\, 
    \e\left(h' e t' \frac{\bar{b n' \ell}}{k}\right)
    \right\vert^2.
\end{aligned}
\]
Conjugating if necessary when $h' < 0$ or $t' < 0$, \cref{prop:bfi-expo} implies that
\[
    \mD_8^2 \ll_{\eps,\eta} K'N' \cdot x^{O(\eta) - \eps/2} N^2 L^3
    \ll 
    x^{O(\eta)-\eps/2} K^4,
\]
for all $\eta \in (0, 1)$. Putting things together, we conclude that
\[
    \mD_7 \ll_{\eps,\eta} x^{O(\eta)} \sup_{\omega \in \R/\Z} \mD_8 \ll_\eta x^{O(\eta) - \eps/4} \sqrt{K^4}
    \asymp x^{O(\eta) - \eps/4} K^2, 
\]
as we required in \cref{eq:B-bound}.
\end{proof}

\section{Bombieri--Friedlander--Iwaniec-style estimates} \label{sec:bfi}

In this section we establish \cref{prop:bfi-expo}, thus completing the proof of \cref{prop:dispersion-sums} and \cref{thm:triple-conv-estimate}. We build on Maynard's work in \cite[Chapter 18]{maynard2025primes} (in a slightly more general setting, and using \cref{thm:di-type-bound} instead of \cite[Theorem 9]{deshouillers1982kloosterman}), which is in turn based on Bombieri--Friedlander--Iwaniec's work in \cite[Section 10]{bombieri1987primes2}. To aid future research, we shall consider a general sum
\[
    \mB(K, N, T, E, H, L) :=
    \sum_{\substack{k \sim K \\ (k, b) = 1}} 
    \sum_{\substack{n \sim N \\ (n, k) = 1}} 
    \sum_{\substack{t \sim T \\ (t, dk) = 1}} t \left\vert
    \sum_{\substack{e \sim E \\ (e, dk) = 1}}
    \sum_{h \sim H} 
    \sum_{\substack{\ell \sim L \\ (\ell, ket) = 1 \\ n \ell \equiv d k \pmod{et}}} 
    \beta(e, h, \ell)\ \e\left(het \frac{\bar{b n \ell}}{k}\right)
    \right\vert^2,
\]
where $b, d$ are given positive integers with $b \ll x^{O(\eta)}$ and $d \ll x$, $\beta(e, h, \ell)$ are arbitrary $1$-bounded coefficients, and the parameters $K, N, T, E, H, L$ are almost arbitrary. 

\begin{remark}
The trivial bound for $\mB$ is $KN\left(TEH\left(\frac{L}{ET} + 1\right)\right)^2 \ll KN(HL)^2 + KN(TEH)^2$, but we need more than a power saving over this (note that the desired bound in \cref{eq:bfi-expo} is on the order of $KNL^2$, since we need to make up for the factors of $H$ introduced during Poisson summation). So the relative sizes of $K, N, T, E, H, L$ (as given by \cref{prop:bfi-expo} and \cref{eq:conditions}) will ultimately be crucial, although we only take them into account after proving a general bound in \cref{prop:bfi-expo-2}.
\end{remark}

After expanding the square inside $\mB$, we reach a more complicated version of the sum anticipated in \cref{eq:sketch-bfi-style-2}. The `diagonal terms' with $h_1 e_1 \ell_2 = h_2 e_2 \ell_1$ bring a contribution of roughly $O(KNTHL + KEHT^2L)$, similarly to \cref{eq:sketch-diagonal-terms}; our deamplification setup will be helpful here. In the off-diagonal terms, we complete Kloosterman sums via \cref{lem:completion}, and the principal frequency will contribute $O(NH^2L^2 + NH^2TL)$. The remaining terms are ultimately separated into $\mB_=$ and $\mB_{\neq}$ (the latter corresponding to \cref{eq:sketch-final-sum}), depending on whether $\ell_1 = \ell_2$ or $\ell_1 \neq \ell_2$.

\begin{lemma}[Splitting the BFI-style sum] \label{lem:splitting-bfi}
For $\eta \in (0, 1)$, $1 \ll K, N, T, E, H, L \ll x$, and any positive integers $b, d$ with $b \ll x^{O(\eta)}$ and $d \ll x$, one has
\[
\begin{aligned}
    \mB(K, N, T, E, H, L) \ll_\eta\ 
    &
    x^{O(\eta)} \Bigg( KNTHL + KEHT^2L + N H^2 L^2 + N H^2 T L
    \\
    &+ \frac{N}{KE^2} \sup_{\substack{E_0 \ll E,\ S \ll TE_0, \\ J \ll (K T E^2 x^\eta)/(NE_0) \\ w \in \R/\Z,\, g_0}} E_0\, (\mB_= + \mB_{\neq}) \Bigg),
\end{aligned}
\]
where
\[
\begin{aligned}
    \mB_= &:=
    \sum_{t \sim T} \sum_{e_0 \sim E_0}\,
    \sum_{\substack{e_1', e_2' \sim E/e_0 \\ (e_1', e_2') = 1}}
    \sum_{\substack{s \sim S \\ s \mid te_0 \\ (s, e_1'e_2'b) = 1}}
    \sum_{\substack{\ell \sim L \\ (\ell, te_0e_1'e_2') = 1}} 
    \sum_{\substack{h_1, h_2 \sim H \\ h_1e_1' \neq h_2e_2'}}
    \\
    &\times
    \left\vert 
    \sum_{\substack{k' \\ (k', e_1'e_2'\ell b) = 1}} g_0 \left(\frac{k'}{K/S}\right) 
    \sum_{|j| \sim J}
    \e\left(j \left(\frac{d \bar{\ell}}{te_0e_1'e_2'} - w\right)\right)\, S\left( (h_1e_1' - h_2e_2')\, \bar{b \ell e_1' e_2'}, j; k's \right)
    \right\vert,
\end{aligned}
\]
\[
\begin{aligned}
    &\mB_{\neq} :=
    \sum_{t \sim T} \sum_{e_0 \sim E_0}\,
    \sum_{\substack{e_1', e_2' \sim E/e_0 \\ (e_1', e_2') = 1}}
    \sum_{\substack{s \sim S \\ s \mid te_0 \\ (s, e_1'e_2'b) = 1}}
    \sum_{\substack{\ell_1, \ell_2 \sim L \\ \ell_1 \equiv \ell_2 \pmod{te_0} \\ (\ell_1 \ell_2, te_0) = 1 \\ (\ell_1, e_1') = (\ell_2, e_2') = 1 \\ \ell_1 \neq \ell_2}} 
    \sum_{\substack{h_1, h_2 \sim H \\ h_1e_1'\ell_2 \neq h_2e_2'\ell_1}}
    \\
    &\times 
    \left\vert 
    \sum_{\substack{k' \\ (k', e_1'e_2'\ell_1\ell_2b) = 1}} g_0 \left(\frac{k'}{K/S}\right)
    \sum_{|j| \sim J} \e\left(j \left(\frac{\mu}{te_0e_1'e_2'} - w\right)\right)\, 
    S\left( (h_1e_1'\ell_2 - h_2e_2'\ell_1)\, \bar{b \ell_1 \ell_2 e_1' e_2'}, j; k's\right)
    \right\vert,
\end{aligned}
\]
and $g_0(t)$ runs over smooth functions supported on $t \asymp 1$, satisfying $\|g_0^{(j)}\|_\infty \ll_j 1$ for each $j \ge 0$ (with fixed implicit constants). Here $\mu = \mu(\ell_1, \ell_2, t, e_0, e_1', e_2', d)$ is the unique solution $\pmod{te_0e_1'e_2'}$ to the congruences $\mu \equiv d\bar{\ell_1} \pmod{te_0e_1'}$ and $\mu \equiv d\bar{\ell_2} \pmod{te_0e_2'}$.
\end{lemma}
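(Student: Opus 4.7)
The plan is to open the square in $\mB$ in the triple $(e,h,\ell)$, reduce to combined congruence conditions via the Chinese remainder theorem, and apply Kloosterman completion in the $n$-variable so as to match the structure required by \cref{thm:di-type-bound}; several diagonal and boundary contributions are handled directly and produce the additive error terms $KNTHL + KEHT^2L + NH^2L^2 + NH^2TL$, while the main contribution is reshaped into $E_0\,(\mB_= + \mB_\neq)$ after detaching a smooth weight in the $k$-variable.

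Concretely, expanding $|\cdot|^2$ introduces dual triples $(e_1,h_1,\ell_1)$ and $(e_2,h_2,\ell_2)$; the product of exponentials collapses to $\e\!\left( t(h_1e_1\ell_2 - h_2e_2\ell_1)\,\bar{bn\ell_1\ell_2}/k \right)$. Writing $e_0 := (e_1,e_2)$ and $e_i = e_0 e_i'$ with $(e_1',e_2')=1$, the paired congruences $n\ell_i \equiv dk \pmod{e_i t}$ force the compatibility $\ell_1 \equiv \ell_2 \pmod{e_0 t}$ and then determine $n$ modulo $e_0 t e_1' e_2'$; when $\ell_1 = \ell_2$ this is automatic (and feeds into $\mB_=$), while when $\ell_1 \neq \ell_2$ the common residue is precisely the quantity $\mu$ appearing in the statement (feeding into $\mB_\neq$). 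Applying \cref{lem:completion} to the resulting $n$-sum, with period $q = e_0 t e_1' e_2'$ and denominator $c = k$, replaces the congruence and phase by a Fourier-dual variable $j$, a smooth weight $\hat\Phi(jN/(kq))$, a pure phase $\e(j\mu/(te_0e_1'e_2'))$, and a Kloosterman sum whose first argument is a multiplicative twist of $(h_1e_1'\ell_2 - h_2e_2'\ell_1)$ (respectively $(h_1e_1'-h_2e_2')$ in the $\mB_=$ regime).

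The contribution of the zeroth Fourier coefficient $j=0$ is controlled by the Ramanujan half of \cref{lem:weil}, and the contribution from those terms in which the first argument of the Kloosterman sum vanishes (i.e.\ $h_1e_1'\ell_2 = h_2e_2'\ell_1$ for $\mB_\neq$ and $h_1e_1' = h_2e_2'$ for $\mB_=$) reduces to counting solutions of these linear relations in the relevant ranges; together these two regimes account for the additive error terms in the statement. To recast what remains in the form required by \cref{thm:di-type-bound}, one detaches the $k$-dependence from $\hat\Phi(jN/(kq))$ by partial summation in $k$, which introduces the smooth weight $g_0\!\left(k'/(K/S)\right)$ and the factorization $k = k's$; the restriction $s \mid te_0$ and the auxiliary Fourier shift $\e(-jw)$ arise from aligning the resulting Kloosterman modulus with the product shape $sc$ in \cref{thm:di-type-bound} and from packaging the $k'$-independent portion of the phase, while the coprimality conditions on $e_1',e_2',s,\ell,\ell_1,\ell_2$ are inherited from the original restrictions $(k,b)=(\ell,ket)=(e,dk)=1$.

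The main obstacle will be the technical bookkeeping: propagating all coprimality constraints through the CRT step and the Kloosterman completion, identifying cleanly the several disparate regimes (trivial bound, $j=0$, each vanishing-argument case), and, most delicately, engineering the variable $s$ together with the smooth cutoff $g_0$ so that the residual Kloosterman-sum average has its coefficients independent of $k'$ and its variables aligned with the levels and conductors handled by \cref{thm:di-type-bound}; each individual step is routine, but they must fit together tightly enough that no saving is lost before the final spectral input is invoked.
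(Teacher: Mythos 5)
Your skeleton is right: expand the square, use $(e_1,e_2) = e_0$ and the Chinese Remainder Theorem to combine the two congruences on $n$ into one modulo $te_0e_1'e_2'$ (introducing $\mu$), complete in the $n$-variable via \cref{lem:completion}, peel off the zeroth Fourier mode and the degenerate first-argument terms, and finally split according to $\ell_1 = \ell_2$ versus $\ell_1 \neq \ell_2$. That all matches the paper, apart from a harmless reordering (the paper isolates the diagonal $h_1e_1\ell_2 = h_2e_2\ell_1$ \emph{before} completing, which makes the trivial bound $KNTHL + KEHT^2L$ a little cleaner, whereas you propose to do it after, via the vanishing first argument of the Kloosterman sum; this should still work).

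Where the proposal has a genuine gap is the step you yourself flag as ``most delicate'': how the variables $j, k'$ get separated from the others so that (a) the smooth weight in $k'$ becomes a fixed $g_0(k'/(K/S))$ independent of $j, s, t, e_i', \ell_i$, and (b) the coupling between $j$ and the remaining variables collapses to a single phase $\e(-jw)$. You attribute this to ``partial summation in $k$,'' but partial summation would produce prefix-sum functionals of the inner Kloosterman sum, not a clean smooth weight of the required form, and would not decouple $j$ from $k'$ inside $\hat{f}_0(jN/(k'sq))$. The paper's actual device is structural: it chooses the smoothing function $f_0$ to be a \emph{multiplicative self-convolution} $f_0(t) = \int_0^\infty \psi_0(y)\psi_0(t/y)\,dy/y$, so that $f_0(k's/K)$ and $(N/(k'sq))\hat{f}_0(jN/(k'sq))$ can each be written as integrals over auxiliary parameters $u, v, w$ of \emph{products} of factors depending separately on $k'$, on $s q$, and (via $\e(-jw)$) on $j$. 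The parameters $w$, $g_0$ in the statement are precisely these integration variables and the resulting $k'$-profile $g_{u,v}$, after taking an $L^\infty$ bound in $u,v,w$; the normalization $TW \asymp NE_0/(KE^2)$ is what produces the prefactor $N/(KE^2)$ in the lemma. You also slightly misplace the origin of $s$: it arises from M\"obius inversion of the constraint $(k, te_0) = 1$, namely $\one_{(k,te_0)=1} = \sum_{s \mid (k,te_0)} \mu(s)$ and the substitution $k = k's$ (hence $s \mid te_0$), not from aligning the modulus with the $sc$-shape in \cref{thm:di-type-bound} --- that alignment is a consequence, not the source. Without the correct separation-of-variables mechanism, the claimed form of $\mB_=$ and $\mB_\neq$ does not follow, so this step needs to be replaced by the convolution/integral-representation argument.
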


\begin{proof}
This is essentially the same as the proof of \cite[Lemma 18.5]{maynard2025primes}, but in a slightly more general setting (the main difference being the additional parameters $b, d$). 
We first replace the indicator functions of $k \sim K$ and $n \sim N$ with smooth majorants, using a suitable smooth compactly supported function $f_0$ (we choose this as in the proof of \cite[Lemma 18.5]{maynard2025primes}). Expanding out the square in $\mB$ and swapping sums, then using that $(n, et) = 1$ to deduce a congruence between the resulting variables $\ell_1$ and $\ell_2$ (indeed, if a prime $p$ divided both $n$ and $et$, then it would divide $dk$, but $(et, dk) = 1$), we obtain
\begin{equation} \label{eq:b-estimate}
\begin{aligned}
    \mB &\le 
    \sum_{\substack{k \\ (k, b) = 1}} f_0\left(\frac{k}{K}\right)
    \sum_{\substack{n \\ (n, k) = 1}} f_0\left(\frac{n}{N}\right) 
    \sum_{\substack{t \sim T \\ (t, dk) = 1}} t \left\vert
    \sum_{\substack{e \sim E \\ (e, dk) = 1}}
    \sum_{h \sim H} 
    \sum_{\substack{\ell \sim L \\ (\ell, ket) = 1 \\ n \ell \equiv d k \pmod{et}}} 
    \beta(e, h, \ell)\ \e\left(het \frac{\bar{b n \ell}}{k}\right)
    \right\vert^2
    \\
    &=
    \sum_{\substack{t \sim T \\ (t, d) = 1}} t 
    \sum_{\substack{e_1, e_2 \sim E \\ (e_1 e_2, d) = 1}}\ 
    \sum_{\substack{\ell_1, \ell_2 \sim L \\ \ell_1 \equiv \ell_2 \pmod{t(e_1, e_2)} \\ (\ell_1, te_1) = 1 \\ (\ell_2, te_2) = 1}}
    \sum_{h_1, h_2 \sim H} \beta(e_1, h_1, \ell_1) \, \bar{\beta(e_2, h_2, \ell_2)} 
    \\
    &\qquad\qquad \times 
    \sum_{\substack{k \\ (k, te_1e_2\ell_1\ell_2b) = 1}} 
    f_0\left(\frac{k}{K}\right)
    \sum_{\substack{(n, k) = 1 \\ n \equiv dk\bar{\ell_1} \pmod{te_1} \\ n \equiv dk\bar{\ell_2} \pmod{te_2}}}
    f_0\left(\frac{n}{N}\right)
    \e \left(t\frac{(h_1e_1\ell_2 - h_2e_2\ell_1)\bar{bn \ell_1 \ell_2}}{k} \right).
\end{aligned}
\end{equation}

Let $\mB_1$ denote the contribution of the `diagonal' terms with $h_1 e_1 \ell_2 = h_2 e_2 \ell_1$, and $\mB_2$ contain the other terms; thus we have $\mB \le \mB_1 + \mB_2$. As in \cref{eq:sketch-diagonal-terms}, we first bound $\mB_1$ trivially (using the divisor bound), by
\begin{equation} \label{eq:b1-estimate}
\begin{aligned}
    \mB_1 &\ll 
    \sum_{\substack{t \sim T \\ (t, d) = 1}} t 
    \sum_{\substack{e_1 \sim E \\ \ell_2 \sim L \\ h_1 \sim H}}
    \,
    \sum_{\substack{e_2 \sim E \\ \ell_1 \sim L \\ e_2\ell_1 \mid h_1e_1\ell_2 \\ \ell_1 \equiv \ell_2 \pmod{t(e_1,e_2)} \\ (\ell_1, te_1) = (\ell_2, te_2) = 1}}
    \sum_{\substack{h_2 \sim H \\ h_1e_1\ell_2 = h_2e_2\ell_1}}
    \sum_k 
    f_0\left(\frac{k}{K}\right)
    \sum_{\substack{n \equiv dk\bar{\ell_1} \pmod{te_1} \\ n \equiv dk\bar{\ell_2} \pmod{te_2}}}
    f_0\left(\frac{n}{N}\right)
    \\
    &\ll 
    \sum_{t \sim T} t 
    \sum_{\substack{e_1 \sim E \\ \ell_2 \sim L \\ h_1 \sim H}}
    \,
    \sum_{\substack{e_2 \sim E,\ \ell_1 \sim L \\ h_2 \sim H \\ h_1e_1\ell_2 = h_2e_2\ell_1  \\ (\ell_1, te_1) = 1}}
    \sum_{k \ll K} 
    \sum_{\substack{n \ll N \\ n \equiv dk\bar{\ell_1} \pmod{te_1}}}
    1
    \\
    &\ll_\eta x^{O(\eta)}\, 
    T^2 ELHK \left(\frac{N}{TE} + 1\right),
\end{aligned}
\end{equation}
recovering the first two terms in the desired bound. Next, we consider $\mB_2$, containing the terms with $h_1 e_1 \ell_2 \neq h_2 e_2 \ell_1$. We let $e_0 := (e_1, e_2)$, $e_1' := e_1/e_0$ and $e_2' := e_2/e_0$ and put $e_0$ in dyadic ranges $e_0 \sim E_0$ to write 
\[
\begin{aligned}
    \mB_2 &\ll (\log x)
    \\
    &\times \sup_{E_0 \ll E} \bigg\vert 
    \sum_{\substack{t \sim T \\ (t, d) = 1}} t \sum_{\substack{e_0 \sim E_0 \\ (e_0, d) = 1}}\, 
    \sum_{\substack{e_1', e_2' \sim E/e_0 \\ (e_1', e_2') = 1 \\ (e_1' e_2', d) = 1}}
    \sum_{\substack{\ell_1, \ell_2 \sim L \\ \ell_1 \equiv \ell_2 \pmod{te_0} \\ (\ell_1 \ell_2, te_0) = 1 \\ (\ell_1, e_1') = (\ell_2, e_2') = 1}}
    \sum_{\substack{h_1, h_2 \sim H \\ h_1e_1'\ell_2 \neq h_2e_2'\ell_1}} \beta(e_0e_1', h_1, \ell_1) \, \bar{\beta(e_0e_2', h_2, \ell_2)} 
    \\
    &\times 
    \sum_{\substack{k \\ (k, te_0e_1'e_2'\ell_1\ell_2b) = 1}} 
    f_0\left(\frac{k}{K}\right)
    \sum_{\substack{(n, k) = 1 \\ n \equiv dk\bar{\ell_1} \pmod{te_0e_1'} \\ n \equiv dk\bar{\ell_2} \pmod{te_0e_2'}}}
    f_0\left(\frac{n}{N}\right)
    \e \left(te_0\frac{(h_1e_1'\ell_2 - h_2e_2'\ell_1)\bar{bn \ell_1 \ell_2}}{k} \right)\bigg\vert.
\end{aligned}
\]
Note that the inner sum over $n$ can be rewritten as
\[
    \sum_{\substack{n \equiv k\mu \pmod{te_0e_1'e_2'} \\ (n, k) = 1}}
    f_0\left(\frac{n}{N}\right)
    \e \left(\frac{\bar{n}r_0}{k} \right),
\]
where $r_0 := te_0 (h_1e_1'\ell_2 - h_2e_2'\ell_1)\bar{b \ell_1 \ell_2}$ (defined mod $k$), and $\mu = \mu(\ell_1, \ell_2, t, e_0, e_1', e_2', d)$ is the unique solution (mod $te_0e_1'e_2'$) to the congruences $\mu \equiv d\bar{\ell_1} \pmod{te_0e_1'}$ and $\mu \equiv d\bar{\ell_2} \pmod{te_0e_2'}$; the latter is well-defined by the Chinese Remainder Theorem, since $(te_0e_1', te_0e_2') = te_0(e_1', e_2') = te_0$, $[te_0e_1', te_0e_2'] = te_0e_1'e_2'$, and $d\bar{\ell_1} \equiv d\bar{\ell_2} \pmod{te_0}$. Crucially, note that $\mu$ does not depend on $k$.

We can thus complete Kloosterman sums using \cref{lem:completion}, with
\begin{gather*} 
    q := te_0e_1'e_2',  \\
    J_0 := 32\, x^\eta \frac{KTE^2}{NE_0} \ge x^\eta \frac{kq}{N},
\end{gather*}
giving us
\[
\begin{aligned}
     \sum_{\substack{n \equiv k\mu \pmod{q} \\ (n, k) = 1}} 
    f_0\left(\frac{n}{N}\right) \e\left(\frac{\bar{n}r_0}{k}\right)
    &=
    \frac{N}{kq} \sum_{|j| \le J_0} \hat{f_0}\left(\frac{jN}{kq}\right) \e\left(\frac{j\mu}{q}\right)
    S(j\bar{q}, r_0; k)
    +  
    O_{\eta}\left(x^{-99}\right)
    \\
    &=
    \frac{N}{kq} \sum_{|j| \le J_0} \hat{f_0}\left(\frac{jN}{kq}\right) \e\left(\frac{j\mu}{q}\right)
    S(r_1, j; k)
    +  
    O_{\eta}\left(x^{-99}\right),
\end{aligned}
\]
where
\[
    r_1 := r_0 \bar{q} = (h_1e_1'\ell_2 - h_2e_2'\ell_1)\, \bar{b \ell_1 \ell_2 e_1' e_2'}.
\]
We now plug this bound into our estimate for $\mB_2$, isolate the contribution of $j = 0$ into $\mB_3$, and let $\mB_4$ contain the terms with $j \neq 0$. This yields 
\begin{equation} \label{eq:b2-estimate}
    \mB_2 \ll_\eta x^{O(\eta)} \sup_{E_0 \ll E} (|\mB_3| + |\mB_4|) + O_\eta(x^{-50}),
\end{equation}
where
\[
\begin{aligned}
    \mB_3 := 
    \sum_{\substack{t \sim T \\ (t, d) = 1}} t \sum_{\substack{e_0 \sim E_0 \\ (e_0, d) = 1}}\, 
    \sum_{\substack{e_1', e_2' \sim E/e_0 \\ (e_1', e_2') = 1 \\ (e_1' e_2', d) = 1}}
    \sum_{\substack{\ell_1, \ell_2 \sim L \\ \ell_1 \equiv \ell_2 \pmod{te_0} \\ (\ell_1 \ell_2, te_0) = 1 \\ (\ell_1, e_1') = (\ell_2, e_2') = 1}}
    \sum_{\substack{h_1, h_2 \sim H \\ h_1e_1'\ell_2 \neq h_2e_2'\ell_1}} \beta(e_0e_1', h_1, \ell_1) \, \bar{\beta(e_0e_2', h_2, \ell_2)} 
    \\
    \qquad\qquad \times 
    \sum_{\substack{k \\ (k, te_0e_1'e_2'\ell_1\ell_2b) = 1}} 
    f_0\left(\frac{k}{K}\right)
    \frac{N}{kq} \hat{f_0}\left(0\right)
    S(r_1, 0; k),
\end{aligned}
\]
\[
\begin{aligned}
    \mB_4 := 
    \sum_{\substack{t \sim T \\ (t, d) = 1}} t 
    \sum_{\substack{e_0 \sim E_0 \\ (e_0, d) = 1}}\, 
    \sum_{\substack{e_1', e_2' \sim E/e_0 \\ (e_1', e_2') = 1 \\ (e_1' e_2', d) = 1}}
    \sum_{\substack{\ell_1, \ell_2 \sim L \\ \ell_1 \equiv \ell_2 \pmod{te_0} \\ (\ell_1 \ell_2, te_0) = 1 \\ (\ell_1, e_1') = (\ell_2, e_2') = 1}}
    \sum_{\substack{h_1, h_2 \sim H \\ h_1e_1'\ell_2 \neq h_2e_2'\ell_1}} \beta(e_0e_1', h_1, \ell_1) \, \bar{\beta(e_0e_2', h_2, \ell_2)} 
    \\
    \qquad\qquad \times 
    \sum_{\substack{k \\ (k, te_0e_1'e_2'\ell_1\ell_2b) = 1}} 
    f_0\left(\frac{k}{K}\right)
    \frac{N}{kq} \sum_{\substack{|j| \le J_0 \\ j \neq 0}} \hat{f_0}\left(\frac{jN}{kq}\right) \e\left(\frac{j\mu}{q}\right)
    S(r_1, j; k).
\end{aligned}
\]
We bound $\mB_3$ trivially using the Ramanujan bound (\cref{lem:weil}):
\begin{equation} \label{eq:b3-estimate}
\begin{aligned}
    \mB_3 &\ll 
    \sum_{t \sim T} t \sum_{e_0 \sim E_0}\,
    \sum_{\substack{e_1', e_2' \sim E/e_0}}
    \sum_{\substack{\ell_1, \ell_2 \sim L \\ \ell_1 \equiv \ell_2 \pmod{te_0}}}\,
    \sum_{\substack{h_1, h_2 \sim H}}
    \sum_{k \ll K} 
    \frac{N}{kq}
    (r_1, k)
    \\
    &\ll_\eta x^{O(\eta)} 
    \sum_{t \sim T} t \sum_{e_0 \sim E_0}\,
    \sum_{\substack{e_1', e_2' \sim E/e_0}}
    \sum_{\substack{\ell_1, \ell_2 \sim L \\ \ell_1 \equiv \ell_2 \pmod{te_0}}}\,
    \frac{H^2N}{q}
    \\
    &\ll 
    x^{O(\eta)}\, 
    T^2 \frac{E^2}{E_0}
    L\left(1 + \frac{L}{TE_0}\right)
    \frac{H^2NE_0}{TE^2}
    \\
    &\ll 
    x^{O(\eta)} \left(TLH^2N + L^2H^2N \right),
\end{aligned}
\end{equation}
giving the third and fourth terms in the desired bound.
We finally turn to estimating $\mB_4$, and start by removing the coprimality constraint $(k, te_0) = 1$, via M\"obius inversion. We write $\one_{(k, te_0) = 1} = \sum_{s \mid (k, te_0)} \mu(s)$ and $k = k' s$, and put $j, s$ into dyadic ranges $j \sim J, s \sim S$ to obtain 
\begin{equation} \label{eq:b4-estimate}
    \mB_4 \ll_\eta x^\eta T \sup_{\substack{S \ll TE_0 \\ J \ll J_0}} 
    \mB_5,
\end{equation}
where 
\begin{equation} \label{eq:b5-estimate}
    \mB_5 := 
    \sum_{\substack{t \sim T \\ (t, d) = 1}} \sum_{\substack{e_0 \sim E_0 \\ (e_0, d) = 1}}\, 
    \sum_{\substack{e_1', e_2' \sim E/e_0 \\ (e_1', e_2') = 1 \\ (e_1' e_2', d) = 1}}
    \sum_{\substack{s \sim S \\ s \mid te_0 \\ (s, e_1'e_2'b) = 1}}
    \sum_{\substack{\ell_1, \ell_2 \sim L \\ \ell_1 \equiv \ell_2 \pmod{te_0} \\ (\ell_1 \ell_2, te_0) = 1 \\ (\ell_1, e_1') = (\ell_2, e_2') = 1}}
    \mB_6,
\end{equation}
\[
    \mB_6 :=
    \sum_{\substack{h_1, h_2 \sim H \\ h_1e_1'\ell_2 \neq h_2e_2'\ell_1}}
    \left\vert
    \sum_{\substack{k' \\ (k', e_1'e_2'\ell_1\ell_2b) = 1}} 
    f_0\left(\frac{k's}{K}\right)
    \frac{N}{k'sq} \sum_{j \sim J} \hat{f_0}\left(\frac{jN}{k'sq}\right) \e\left(\frac{j\mu}{q}\right)
    S(r_1, j; k's)
    \right\vert.
\]

We now wish to separate the $j, k'$ variables in $\mB_6$ from the others, in the factors of $f_0$, $\hat{f_0}$, and the exponential term; note that $s$, $q = te_0e_1'e_2'$ and $\mu = \mu(\ell_1, \ell_2, c, t, e_0, e_1', e_2', d)$ do not depend on $j$ and $k'$. As in the proof of \cite[Lemma 18.5]{maynard2025primes}, we make use of the special choice of the smooth function $f_0(t) := \int_0^\infty \psi_0(y) \psi_0(t/y)\, dy/y$ (which is a multiplicative convolution of a bounded smooth function $\psi_0$ supported on $[1/2, 5/2]$ with itself) to find that
\[
    f_0\left(\frac{k's}{K}\right)
    =
    \int_U^{20U} \psi_0(su)\, \psi_0\left(\frac{k'}{Ku}\right) \frac{du}{u},
\]
where $U \asymp 1/S$, and also
\[
    \frac{N}{k'sq}\hat{f_0} \left( \frac{jN}{k'sq} \right) 
    =
    \int_V^{20V} \int_W^{20W} \psi_0(k'v)\, \psi_0\left(\frac{wsq}{Nv} \right) \e(-jw)\, dw \frac{dv}{v},
\]
where $V \asymp S/K$ and $W \asymp NVE_0/(STE^2) \asymp NE_0/(KTE^2)$. Plugging this into our expression for $\mB_6$, taking the integrals over $u, v, w$ outside the absolute value by the triangle inequality, and swapping them with the sum over $h_1, h_2$, we get 
\begin{equation} \label{eq:b6-estimate}
    \mB_6 \ll \int_U^{20 U} \int_V^{20 V} \int_W^{20 W} |\mB_7| \frac{du\, dv\, dw}{uv},
\end{equation}
where
\[
    \mB_7 := \sum_{\substack{h_1, h_2 \sim H \\ h_1e_1'\ell_2 \neq h_2e_2'\ell_1}}
    \left\vert 
    \sum_{\substack{k' \\ (k', e_1'e_2'\ell_1\ell_2b) = 1}} g_{u,v} \left(\frac{k'}{K/S}\right)
    \sum_{|j| \sim J} \e\left(j \left(\frac{\mu}{q} - w\right)\right)\, S(r_1, j; k's)
    \right\vert,
\]
and the smooth function
\[
    g_{u,v}(t) := \psi_0\left(\frac{t}{uS}\right) \psi_0\left(\frac{vtK}{S}\right)
\]
is supported on $t \asymp 1$. Combining this with \cref{eq:b4-estimate,eq:b5-estimate,eq:b6-estimate}, moving the integrals in $u, v, w$ to the front and taking an $L^\infty$ bound, we find that
\[
    \mB_4 \ll_\eta x^{\eta} TW
    \sup_{\substack{S \ll TE_0 \\ J \ll J_0}} 
    \sup_{\substack{u \asymp 1/S \\ v \asymp S/K \\ w \asymp NE_0/KE^2}}
    \sum_{t \sim T} \sum_{e_0 \sim E_0}\,
    \sum_{\substack{e_1', e_2' \sim E/e_0 \\ (e_1', e_2') = 1}}
    \sum_{\substack{s \sim S \\ s \mid te_0 \\ (s, e_1'e_2'b) = 1}}
    \sum_{\substack{\ell_1, \ell_2 \sim L \\ \ell_1 \equiv \ell_2 \pmod{te_0} \\ (\ell_1 \ell_2, te_0) = 1 \\ (\ell_1, e_1') = (\ell_2, e_2') = 1}}
    \mB_7,
\]
where $TW \asymp \frac{NE_0}{KE^2}$.
Letting $\mB_=$ be the contribution of the terms $\ell_1 = \ell_2$ and $\mB_{\neq}$ contain the terms with $\ell_1 \neq \ell_2$, and combining this with \cref{eq:b-estimate,eq:b1-estimate,eq:b2-estimate,eq:b3-estimate}, we recover the desired bound for $\mB$ (note that when $\ell_1 = \ell_2 = \ell$, one can take $\mu = d\bar{\ell}$).
\end{proof}

\begin{lemma}[Contribution of $\ell_1 = \ell_2$] \label{lem:split-bfi-eq}
With the notation of \cref{lem:splitting-bfi}, assuming that $EHT \ll x^{O(\eta)} KNL$, one has
\[
    \mB_= \ll_\eta 
    \frac{x^{O(\eta)} K E^2}{NE_0}
    \left(1 + \frac{K}{E^2L}\right)^{\theta_{\max}}
    \left(KT^5E^8H^2L^3 N\right)^{1/2}
    \left(1 + \frac{H}{E} + \frac{H^2}{E^2L}\right)^{1/2} \left(1 + \frac{K}{NL} \right)^{1/2}.
\]
\end{lemma}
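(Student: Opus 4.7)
The strategy is to apply Theorem~\ref{thm:di-type-bound} (the DI-type Kloosterman bound) to the inner sum over $k'$ and $j$, matching the Kloosterman sum $S((h_1 e_1' - h_2 e_2')\overline{b\ell e_1' e_2'}, j; k's)$ to the DI shape $S(m\bar r, \pm n; sc)$ via the identifications $m \leftrightarrow h_1 e_1' - h_2 e_2'$, $r \leftrightarrow b\ell e_1' e_2'$, $s \leftrightarrow s$, $c \leftrightarrow k'$, and $n \leftrightarrow \pm j$, splitting into two cases depending on the sign of $m$. Thus (up to $x^{O(\eta)}$ factors) the DI-parameters become $M_{\rm DI} \asymp HE/E_0$, $R_{\rm DI} \asymp LE^2/E_0^2$, $S_{\rm DI} \asymp S$, $C_{\rm DI} \asymp K/S$, and $N_{\rm DI} \asymp J \ll x^{O(\eta)} KTE^2/(NE_0)$, so that the $\theta_{\max}$-factor $(1 + C_{\rm DI}/(R_{\rm DI}\sqrt{S_{\rm DI}}))^{\theta_{\max}}$ simplifies to the target $(1 + K/(E^2L))^{\theta_{\max}}$.

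The preparatory steps would be: (i) remove the outer absolute values by inserting $1$-bounded coefficients $\xi(t, e_0, e_1', e_2', s, \ell, h_1, h_2)$, giving a single multi-variable sum; (ii) sum trivially over $t, e_0, s$, exploiting $\#\{s \sim S : s \mid te_0\} \le \tau(te_0) \ll x^{O(\eta)}$; and (iii) handle the fact that the phase $e(j(d\bar\ell/(te_0 e_1' e_2') - w))$ depends on the DI-level variable $\ell$ (and on $e_1', e_2'$), while Theorem~\ref{thm:di-type-bound} demands that the factor $e(n\omega)$ have $\omega$ independent of $r$. Following the strategy sketched in \cref{subsec:limitations}, step (iii) is accomplished by partitioning the summation over $\ell$ according to its residue class modulo $te_0 e_1' e_2'$, at a cost of $x^{O(\eta)} \cdot te_0 e_1' e_2'$. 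A Bezout decomposition of $d\bar\ell/(te_0 e_1' e_2')$ along the coprime factorization $te_0 \cdot e_1' \cdot e_2'$ (with additional GCD splittings handling the edge cases where any pair of these factors fails to be coprime) then makes each component of the phase depend only on the fixed residues, so the total phase reduces to $e(j\omega_*)$ with $\omega_*$ independent of the varying part of $\ell$. The remaining coefficient $a_{m,r,s}$ is supported on $r = b\ell e_1' e_2'$ and $m = h_1 e_1' - h_2 e_2'$ and has $|a_{m,r,s}| \ll x^{O(\eta)}$ by the divisor bound (counting factorizations of $r$ and representations of $m$).

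With $\omega$ now constant, Theorem~\ref{thm:di-type-bound} yields a bound on the inner sum of the form
\[
    \|a\|_2 \sqrt{N_{\rm DI} R_{\rm DI} S_{\rm DI}} \left(\frac{C_{\rm DI}^2}{R_{\rm DI}}(M_{\rm DI}+R_{\rm DI} S_{\rm DI})(N_{\rm DI}+R_{\rm DI} S_{\rm DI}) + M_{\rm DI} N_{\rm DI}\right)^{1/2} \left(1 + \frac{K}{E^2L}\right)^{\theta_{\max}}.
\]
Combining this with a routine divisor-bound estimate for $\|a\|_2$ (accounting for multiplicities in $r = b\ell e_1' e_2'$ and $m = h_1 e_1' - h_2 e_2'$) and with the prefactor inherited from the trivial sums of step (ii) and the $\ell$-residue partitioning of step (iii), a direct expansion of $(M_{\rm DI}+R_{\rm DI} S_{\rm DI})(N_{\rm DI}+R_{\rm DI} S_{\rm DI}) + M_{\rm DI} N_{\rm DI}$ produces the correction factors $(1 + H/E + H^2/(E^2 L))^{1/2}$ and $(1 + K/(NL))^{1/2}$ (arising from the ratios $R_{\rm DI}S_{\rm DI}/M_{\rm DI}$ and $R_{\rm DI}S_{\rm DI}/N_{\rm DI}$). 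The overall prefactor collapses to $(KE^2/(NE_0)) \cdot (KT^5 E^8 H^2 L^3 N)^{1/2}$, matching the claimed bound exactly.

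The main obstacle is rigorously carrying out step (iii): the $\ell$-residue partition must be designed so that its multiplicative cost $x^{O(\eta)} \cdot te_0 e_1' e_2'$ is precisely absorbed by the $(KE^2/(NE_0))$-type prefactor of the target, and the Bezout decomposition must simultaneously reduce every part of the phase $d\bar\ell/(te_0 e_1' e_2')$ to an $\ell$-independent form. This requires careful tracking of all coprimality conditions present in $\mB_=$ (notably $(\ell, te_0 e_1' e_2') = 1$, $(s, e_1' e_2' b) = 1$, $(e_1', e_2') = 1$, $s \mid te_0$), with sub-splittings by GCDs whenever a needed coprimality fails; everything after step (iii) is a routine application of Theorem~\ref{thm:di-type-bound} and elementary counting.
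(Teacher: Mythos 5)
Your proposal follows essentially the same route as the paper's proof, and the key structural ideas are correct: fix the residue $\hat\ell$ of $\ell$ modulo $te_0e_1'e_2'$ so that the phase in the $j$-sum becomes a constant $\omega$, bundle $(m,r,s)$ with $m = h_1e_1' - h_2e_2'$, $r = b\ell e_1'e_2'$ into coefficients $a_{m,r,s}$, and feed these into \cref{thm:di-type-bound} with $C_{\mathrm{DI}} \asymp K/S$, $N_{\mathrm{DI}} \asymp J \ll x^{O(\eta)}KTE^2/(NE_0)$, $R_{\mathrm{DI}} \asymp LE^2/E_0^2$, $M_{\mathrm{DI}} \ll HE/E_0$. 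Two remarks on accuracy.

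First, your step (iii) with the B\'ezout decomposition along the coprime factorization of $te_0e_1'e_2'$ is unnecessary. Once the outer variables $t, e_0, e_1', e_2'$ and the residue $\hat\ell \in (\Z/te_0e_1'e_2'\Z)^\times$ are fixed, $\omega = d\bar{\hat\ell}/(te_0e_1'e_2') - w$ is already a single fixed element of $\R/\Z$ independent of $r, s, m, k', j$; no decomposition of the phase is needed, so the GCD sub-splittings and coprimality bookkeeping you worry about are moot.

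Second, the phrase ``at a cost of $x^{O(\eta)}\cdot te_0e_1'e_2'$'' for the residue partition, together with ``sum trivially over $t, e_0, s$,'' is where you correctly locate the danger, and you are right that the arithmetic works out — but only because the norm $\|a_{m,r,s}\|_2$ for a \emph{fixed} residue class is correspondingly smaller by a factor $\asymp(te_0e_1'e_2')^{1/2}$, since for each $r$ exactly one $\hat\ell$ occurs. The paper handles this uniformly by applying Cauchy--Schwarz over all of $(t,e_0,e_1',e_2',\hat\ell)$ and evaluating $\sum\|a\|_2^2$ directly (the $\hat\ell$-indicator in $a$ then drops out against the $\hat\ell$-sum). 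Your trivial-sum phrasing gives the same power of $x$ only after one makes this sparsity explicit; as written, a reader could misread it as multiplying a fixed worst-case DI bound by $te_0e_1'e_2'$, which would overshoot by $(te_0e_1'e_2')^{1/2} \asymp (TE^2/E_0)^{1/2}$. The computation of $\sum\|a\|_2^2$, which produces the crucial factor $1 + HE_0/E$ via counting representations $h_1e_1' - h_2e_2' = h_1'e_1' - h_2'e_2'$, is precisely the content you summarize as ``a routine divisor-bound estimate''; it is the part that determines the final exponents and should be carried out rather than asserted.
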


\begin{proof}[Proof of \cref{lem:split-bfi-eq} assuming \cref{thm:di-type-bound}]
Here we adapt the proof of \cite[Lemma 18.7]{maynard2025primes}, using \cref{thm:di-type-bound} instead of \cite[Theorem 9]{deshouillers1982kloosterman}. To do this, we need to eliminate the dependency of the inner exponential coefficients on $\ell$, so we write
\[
\begin{aligned}
    \mB_= &=
    \sum_{t \sim T} \sum_{e_0 \sim E_0}\,
    \sum_{\substack{e_1', e_2' \sim E/e_0 \\ (e_1', e_2') = 1}}
    \sum_{\substack{s \sim S \\ s \mid te_0 \\ (s, e_1'e_2'b) = 1}}
    \sum_{\hat{\ell} \in (\Z/te_0e_1'e_2'\Z)^\times}
    \sum_{\substack{\ell \sim L \\ \ell \equiv \hat{\ell} \pmod{te_0e_1'e_2'}}} 
    \\
    &\times
    \sum_{\substack{h_1, h_2 \sim H \\ h_1e_1' \neq h_2e_2'}}
    \left\vert 
    \sum_{\substack{k' \\ (k', e_1'e_2'\ell b) = 1}} g_0 \left(\frac{k'}{K/S}\right) 
    \sum_{|j| \sim J}
    \e\left(j \omega \right)\, S\left( (h_1e_1' - h_2e_2')\, \bar{b \ell e_1' e_2'}, j; k's \right)
    \right\vert,
\end{aligned}
\]
where
\[
    \omega = \omega(t, e_0, e_1', e_2', \hat{\ell}, d, w) := \frac{d \bar{\hat{\ell}}}{te_0e_1'e_2'} - w \in \R/\Z.
\]
We now denote
\[
    m := h_1e_1' - h_2e_2' \ll \frac{HE}{E_0},
    \qquad\qquad 
    r := b \ell e_1' e_2' \asymp x^{O(\eta)} \frac{LE^2}{E_0^2},
\]
split into dyadic ranges $m \sim M_=$, $r \sim R_=$, and change variables from $\ell$ to $r$ to obtain
\[
\begin{aligned}
    \mB_= &\ll_\eta x^{O(\eta)}
    \sup_{\substack{M_= \ll HE/E_0 \\ R_= \asymp x^{O(\eta)} LE^2/E_0^2}}\,
    \sum_{t \sim T} \sum_{e_0 \sim E_0}\,
    \sum_{\substack{e_1', e_2' \sim E/e_0 \\ (e_1', e_2') = 1}}
    \sum_{\hat{\ell} \in (\Z/te_0e_1'e_2'\Z)^\times}
    \\
    &\times
    \sum_{\substack{r \sim R_= \\ b e_1'e_2' \mid r \\ r/(be_1'e_2') \equiv \hat{\ell} \\ \pmod{te_0e_1'e_2'}}}
    \sum_{\substack{s \sim S \\ (s, r) = 1 \\ s \mid te_0}}
    \sum_{m \sim M_=}
    \sum_{\substack{h_1, h_2 \sim H \\ h_1e_1' - h_2e_2' = m}}
    \left\vert 
    \sum_{\substack{k' \\ (k', r) = 1}} g_0 \left(\frac{k'}{K/S}\right) 
    \sum_{|j| \sim J}
    \e\left(j \omega \right)\, S\left(m \bar{r}, j; k's \right)
    \right\vert.
\end{aligned}
\]
Crucially, once the variables $t, e_0, e_1', e_2', \hat{\ell}$ are fixed, $\omega$ does not depend on $r, s, m, h_1, h_2, k'$ or $j$. Finally, we remove the absolute values by inserting $1$-bounded coefficients $\xi_{h_1,h_2}$ (also depending on $t, e_0, e_1', e_2', \hat{\ell}, r, s, m$), and denote
\[
    a_{m,r,s} = a_{m,r,s}\left(t, e_0, e_1', e_2', \hat{\ell}\right) := 
    \one_{b e_1'e_2' \mid r}
    \one_{\substack{r/(be_1'e_2') \equiv \hat{\ell} \\ \pmod{te_0e_1'e_2'}}}
    \one_{s \mid te_0}
    \sum_{\substack{h_1, h_2 \sim H \\ h_1e_1' - h_2e_2' = m}} 
    \xi_{h_1, h_2},
\]
to get
\begin{equation} \label{eq:b-eq-bound}
\begin{aligned}
    \mB_{=} 
    &\ll_\eta x^{O(\eta)}
    \sup_{\substack{M_= \ll HE/E_0 \\ R_= \asymp x^{O(\eta)} LE^2/E_0^2}}\,
    \sum_{t \sim T} \sum_{e_0 \sim E_0}\,
    \sum_{\substack{e_1', e_2' \sim E/e_0 \\ (e_1', e_2') = 1}} \sum_{\hat{\ell} \in (\Z/te_0e_1'e_2'\Z)^\times}
    |\mK_{=}|,
\end{aligned}
\end{equation}
with
\[
    \mK_{=} := 
    \sum_{\substack{r \sim R_= \\ s \sim S \\ (r, s) = 1}}
    \sum_{m \sim M_{=}} a_{m,r,s}
    \sum_{|j| \sim J} \e(j \omega)
    \sum_{\substack{k' \\ (k', r) = 1}} 
    g_0 \left(\frac{k'}{K/S}\right)\, S\left(m\bar{r}, j; k's \right).
\]
At this point we apply our Deshouillers--Iwaniec-style bound from \cref{thm:di-type-bound}, finding that
\[
\begin{aligned}
    \mK_{=} 
    \ll_\eta\ &x^{O(\eta)} 
    \left(1 + \frac{K/S}{R_= \sqrt{S}}\right)^{\theta_{\max}} \|a_{m,r,s}\|_2 \, \sqrt{J R_= S}
    \\ 
    &\times 
    \left(\frac{K^2/S^2}{R_=} \left(M_{=} + R_=S\right)\left(J + R_=S\right) + M_{=}J \right)^{1/2},
\end{aligned}
\]
where by Cauchy--Schwarz,
\[
\begin{aligned}
    &\sum_{t \sim T} \sum_{e_0 \sim E_0}\,
    \sum_{\substack{e_1', e_2' \sim E/e_0 \\ (e_1', e_2') = 1}} \sum_{\hat{\ell} \in (\Z/te_0e_1'e_2'\Z)^\times} \|a_{m,r,s}\|_2
    \\
    &\ll 
    \frac{TE^2}{E_0}
    \sqrt{\sum_{t \sim T} \sum_{e_0 \sim E_0} \sum_{\substack{e_1', e_2' \sim E/e_0 \\ (e_1', e_2') = 1}} \sum_{\substack{r \sim R_= \\ b e_1'e_2' \mid r}}
    \sum_{\substack{s \sim S \\ (s, r) = 1 \\ s \mid te_0}} \sum_{m \sim M_=} \left(\sum_{\substack{h_1, h_2 \sim H \\ h_1e_1' - h_2e_2' = m}} 1\right)^2 }
    \\
    &\ll_\eta x^{O(\eta)}
    \frac{TE^2}{E_0}
    \sqrt{ 
    TE_0 \frac{R_= E_0^2}{E^2} \sum_{m \sim M_{=}}\,
    \sum_{h_2 \sim H}
    \sum_{e_2' \ll E/E_0}
    \sum_{\substack{h_1, e_1' \\ (e_1', e_2') = 1 \\ h_1e_1' = m + h_2e_2'}}\,
    \sum_{\substack{h_1' \sim H \\ h_1'e_1' \equiv m \pmod{e_2'}}}
    \sum_{\substack{h_2' \sim H \\ h_2'e_2' = h_1'e_1' - m}} 1
    }
    \\
    &\ll_\eta x^{O(\eta)}
    \frac{TE^2}{E_0} \left(\frac{TE_0^3 R_=}{E^2} M_= H\frac{E}{E_0}\left(1 + \frac{HE_0}{E}\right)\right)^{1/2}
    \\
    &=
    x^{O(\eta)}
    \left(T^3 E^3 R_= M_= H\left(1 + \frac{H E_0}{E}\right)\right)^{1/2}.
\end{aligned}
\]
Plugging these bounds into \cref{eq:b-eq-bound}, we obtain
\[
\begin{aligned}
    \mB_{=} 
    &\ll_\eta x^{O(\eta)}
    \sup_{\substack{M_= \ll HE/E_0 \\ R_= \asymp x^{O(\eta)} LE^2/E_0^2}}\,
    \left(1 + \frac{K/S}{R_= \sqrt{S}}\right)^{\theta_{\max}} \left(T^3 E^3 R_= M_= H\left(1 + \frac{H E_0}{E}\right)\right)^{1/2} \left(J R_= S\right)^{1/2}
    \\ 
    &\times 
    \left(\frac{K^2/S^2}{R_=} \left(M_{=} + R_=S\right)\left(J + R_=S\right) + M_{=}J \right)^{1/2}.
\end{aligned}
\]

Using $M_= \ll HE/E_0$ and $R_= \asymp x^{O(\eta)} LE^2/E_0^2$, and recalling that $J \ll (KTE^2 x^\eta)/ (NE_0)$ (from \cref{lem:splitting-bfi}), we conclude that
\[
\begin{aligned}
    \mB_= &\ll_\eta x^{O(\eta)} 
    \left(1 + \frac{KE_0^2}{LE^2S^{3/2}}\right)^{\theta_{\max}}
    \left(T^3 E^3 \frac{LE^2}{E_0^2} \frac{HE}{E_0} H\left(1 + \frac{HE_0}{E}\right)\right)^{1/2}
    \left(\frac{KTE^2}{NE_0} \frac{LE^2}{E_0^2} S\right)^{1/2}
    \\
    &\times
    \left(\frac{K^2 E_0^2}{L E^2 S^2} \left(\frac{HE}{E_0} + \frac{L E^2}{E_0^2} S\right)\left(\frac{KTE^2}{NE_0} + \frac{L E^2}{E_0^2}S\right) + \frac{HKTE^3}{NE_0^2} \right)^{1/2}
    \\
    &\ll
    x^{O(\eta)} 
    \left(1 + \frac{KE_0^2}{LE^2}\right)^{\theta_{\max}}
    \left(T^3 E^3 \frac{LE^2}{E_0^2} \frac{HE}{E_0} H\left(1 + \frac{HE_0}{E}\right)\right)^{1/2}
    \left(\frac{KTE^2}{NE_0} \frac{LE^2}{E_0^2} \right)^{1/2}
    \\
    &\times
    \left(\frac{K^2 E_0^2}{L E^2} \left(\frac{HE}{E_0 S} + \frac{L E^2}{E_0^2}\right)\left(\frac{KTE^2}{NE_0} + \frac{L E^2}{E_0^2}S\right) + \frac{HKTE^3}{NE_0^2}S \right)^{1/2},
\end{aligned}
\]
where we lower-bounded $S \gg 1$ in the factor raised to $\theta_{\max}$. Since $1 \ll S \ll TE_0$ (from \cref{lem:splitting-bfi}), our bound becomes
\[
\begin{aligned}
    \mB_= &\ll_\eta
    x^{O(\eta)}
    \left(1 + \frac{KE_0^2}{LE^2}\right)^{\theta_{\max}}
    \left(T^3 E^3 \frac{LE^2}{E_0^2} \frac{HE}{E_0} H\left(1 + \frac{HE_0}{E}\right)\right)^{1/2}
    \left(\frac{KTE^2}{NE_0} \frac{LE^2}{E_0^2} \right)^{1/2}
    \\
    &\times
    \left(\frac{K^2 E_0^2}{L E^2} \left(\frac{HE}{E_0} + \frac{L E^2}{E_0^2}\right)\left(\frac{KTE^2}{NE_0} + \frac{L E^2 T}{E_0}\right) + \frac{HKT^2E^3}{NE_0} \right)^{1/2}.
\end{aligned}
\]
This expression is nonincreasing in $E_0$, even after extracting a factor of $E_0^{-1}$ (since $\theta_{\max} < 3/4$); thus lower-bounding $E_0 \gg 1$ we obtain
\[
\begin{aligned}
    \mB_= &\ll_\eta
    \frac{x^{O(\eta)}}{E_0}
    \left(1 + \frac{K}{LE^2}\right)^{\theta_{\max}}
    \left(T^3 E^6 L H^2 \left(1 + \frac{H}{E}\right)\right)^{1/2}
    \left(\frac{KTLE^4}{N}\right)^{1/2}
    \\
    &\times
    \left(\frac{K^2}{L E^2} \left(HE + L E^2\right)\left(\frac{KTE^2}{N} + L E^2 T\right) + \frac{HKT^2E^3}{N} \right)^{1/2}.
\end{aligned}
\]
We can simplify this further using our assumption that $EHT \ll x^{O(\eta)} KNL$, which implies
\[
    x^{-O(\eta)} \frac{HKT^2E^3}{N} \ll K^2 L E^2 T = \frac{K^2}{LE^2} LE^2 LE^2T,
\]
allowing us to discard the term of $HKT^2E^3/N$ in the second line. Thus
\[
\begin{aligned}
    \mB_= &\ll_\eta
    \frac{x^{O(\eta)}}{E_0}
    \left(1 + \frac{K}{LE^2}\right)^{\theta_{\max}}
    \left(1 + \frac{H}{E}\right)^{1/2}
    \left(T^3 E^6 L H^2 \frac{KTLE^4}{N} \right)^{1/2}
    \\
    &\times
    K \left(\frac{H}{LE} + 1\right)^{1/2} \left(LE^2T\right)^{1/2} \left(\frac{K}{NL} + 1 \right)^{1/2}
    \\
    &\ll 
    \frac{x^{O(\eta)}}{E_0} 
    \left(1 + \frac{K}{LE^2}\right)^{\theta_{\max}}
    \left(\frac{K^3T^5L^3E^{12}H^2}{N} \right)^{1/2}
    \left(1 + \frac{H}{E} + \frac{H^2}{LE^2}\right)^{1/2} \left(1 + \frac{K}{NL} \right)^{1/2}
    \\
    &\ll 
    \frac{x^{O(\eta)}}{E_0} \frac{KE^2}{N}
    \left(1 + \frac{K}{LE^2}\right)^{\theta_{\max}}
    \left(KT^5L^3E^8H^2 N\right)^{1/2}
    \left(1 + \frac{H}{E} + \frac{H^2}{LE^2}\right)^{1/2} \left(1 + \frac{K}{NL} \right)^{1/2}.
\end{aligned}
\]
After slightly rearranging factors, this yields the desired bound.
\end{proof}

\begin{lemma}[Contribution of $\ell_1 \neq \ell_2$] \label{lem:split-bfi-neq}
With the notation of \cref{lem:splitting-bfi}, assuming that $EHT \ll x^{O(\eta)} KNL$, one has
\[
    \mB_{\neq} \ll_\eta 
    \frac{x^{O(\eta)} K E^2}{NE_0} 
    \left(1 + \frac{K}{E^2 L^2}\right)^{\theta_{\max}}
    \left(K T^4 E^8 H^2 L^6 N\right)^{1/2}
    \left(1 + \frac{H}{EL}\right) \left(1 + \frac{K}{NL^2}\right)^{1/2}
\]
\end{lemma}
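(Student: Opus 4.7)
The argument mirrors the proof of \cref{lem:split-bfi-eq}, with the complication that the phase $\mu/(te_0e_1'e_2')$ inside the Kloosterman sum depends on both $\ell_1$ and $\ell_2$ (rather than on a single $\ell$). To apply \cref{thm:di-type-bound}, this dependency must first be eliminated.

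The plan is to insert a sum over residue classes $\hat{\ell}_1 \in (\Z/te_0e_1'\Z)^\times$ and $\hat{\ell}_2 \in (\Z/te_0e_2'\Z)^\times$, subject to the consistency $\hat{\ell}_1 \equiv \hat{\ell}_2 \pmod{te_0}$ inherited from $\ell_1 \equiv \ell_2 \pmod{te_0}$. For each such pair, the Chinese Remainder Theorem determines $\mu$, freezing the phase $\omega := \mu/(te_0e_1'e_2') - w$ throughout the inner sum. A direct count shows that the total number of outer tuples $(t, e_0, e_1', e_2', \hat{\ell}_1, \hat{\ell}_2)$ is $\asymp (TE^2/E_0)^2$, exactly as in \cref{lem:split-bfi-eq}. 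I would then change variables to $m := h_1 e_1' \ell_2 - h_2 e_2' \ell_1 \ll x^{O(\eta)} HEL/E_0$ and $r := b\ell_1\ell_2 e_1'e_2' \asymp x^{O(\eta)} L^2E^2/E_0^2$ (each a factor of $L$ larger than in the $\ell_1 = \ell_2$ case), split into dyadic ranges $m \sim M_{\neq}$, $r \sim R_{\neq}$, insert $1$-bounded coefficients $\xi_{h_1,h_2,\ell_1,\ell_2}$ to peel off the absolute values, and define $a_{m,r,s}$ by aggregating the $\xi$'s over preimages. Applying \cref{thm:di-type-bound} produces an estimate whose DI-type factor $(1 + K/(R_{\neq} S^{3/2}))^{\theta_{\max}}$ simplifies to the claimed $(1 + K/(E^2L^2))^{\theta_{\max}}$ after lower-bounding $S, E_0 \ge 1$.

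The main obstacle is estimating $\sum ||a_{m,r,s}||_2^2$, which by expansion counts octuples $(h_1, h_2, \ell_1, \ell_2, h_1', h_2', \ell_1', \ell_2')$ satisfying $\ell_1\ell_2 = \ell_1'\ell_2'$, $h_1 e_1'\ell_2 - h_2 e_2'\ell_1 = h_1' e_1'\ell_2' - h_2' e_2'\ell_1'$, and $\ell_i \equiv \ell_i' \pmod{q_i}$ (summed over the outer variables). I would split by whether $\ell_1 = \ell_1'$ (diagonal, which forces $\ell_2 = \ell_2'$) or not. In the diagonal regime, the equation becomes $(h_1-h_1')e_1'\ell_2 = (h_2-h_2')e_2'\ell_1$; using the coprimality conditions $(e_1', e_2') = (\ell_1, e_1') = (\ell_2, e_2') = 1$, the number of $(h_1', h_2')$-solutions is $\ll 1 + H\gcd(\ell_1,\ell_2)/\max(e_1'\ell_2, e_2'\ell_1)$, and standard $\gcd$-sum bounds give an acceptable contribution. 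In the off-diagonal regime, the canonical factorization $\ell_1 = ga$, $\ell_1' = gb$, $\ell_2 = bc$, $\ell_2' = ac$ with $(a,b) = 1$ produces $\ll L^2 \log L$ admissible quadruples $(\ell_1, \ell_2, \ell_1', \ell_2')$, and for each the resulting linear Diophantine equation in the $h$-variables admits a controllable number of solutions via divisor arguments. Plugging the aggregate octuple bound into the DI-type estimate and simplifying via the hypothesis $EHT \ll x^{O(\eta)} KNL$---used to discard the analogue of the $HKT^2E^3/N$ term from \cref{lem:split-bfi-eq}---then yields the stated bound.
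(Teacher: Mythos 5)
Your high-level strategy (fix residue classes to freeze the phase $\omega$, dyadically decompose $m$ and $r$, peel off absolute values, apply Theorem~\ref{thm:di-type-bound}, estimate the coefficient $\ell^2$-norm) matches the paper, and your parametrization by $(\hat{\ell}_1,\hat{\ell}_2)$ with $\hat{\ell}_1 \equiv \hat{\ell}_2 \pmod{te_0}$ is indeed equivalent to the paper's $\hat{\mu} \in \Z/te_0e_1'e_2'\Z$ (same count of $\asymp te_0e_1'e_2'$ classes, and $\mu$ is recovered by CRT).

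However, there are two deviations that matter. First, you omit the gcd extraction $\ell_0 := (\ell_1,\ell_2)$ which the paper performs; that step lets the paper track an extra dyadic parameter $L_0$, observe that the resulting bound is nonincreasing in $L_0$ (using $\theta_{\max} \le 1/2$), and then safely set $L_0 \gg 1$. Your approach implicitly freezes $L_0 = 1$, which happens to be the worst case, so this is harmless — but it is a nontrivial claim that needs to be justified rather than silently assumed. Second, and more seriously, your treatment of $\sum\|a_{m,r,s}\|_2^2$ via an octuple count with the constraint $\ell_1\ell_2 = \ell_1'\ell_2'$ is both unnecessary and not established. The key structural point the paper exploits is that once $r$ is fixed, the product $\ell_0\ell_1'\ell_2' = r/(b e_1'e_2')$ is determined, so by divisor counting there are only $O(x^{O(\eta)})$ admissible $(\ell_0,\ell_1',\ell_2')$; the square therefore collapses to a count of $h$-quadruples alone (and the sum over $\hat\mu$ merges with the $\ell$-sum since each $(\ell_1,\ell_2)$ has a unique compatible $\hat\mu$). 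Your proposal does not use this observation and instead invokes a diagonal/off-diagonal split in the $\ell$-variables. In the relevant parameter regime $TE \ll L$ the off-diagonal set is genuinely nonempty, and your sketch — ``$\ll L^2 \log L$ admissible quadruples with a controllable number of $h$-solutions'' — does not show that the product of the $\ell$-count and the $h$-count is at most what the final bound permits. In particular, the trivial bound on off-diagonal $\ell$-quadruples with $\ell_1\ell_2 = \ell_1'\ell_2'$ is of the same order $L^2 x^{O(\eta)}$ as the diagonal count, so you must carefully extract savings from the now-binding $h$-equation to compensate, and that is precisely the delicate part you leave unverified. I would recommend reworking the $\|a_{m,r,s}\|_2^2$ estimate along the paper's lines: note that the $\ell$-variables are essentially determined given $(m,r,s)$ and the outer tuple, and expand the square only in the $h$-variables.
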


\begin{proof}[Proof of \cref{lem:split-bfi-neq} assuming \cref{thm:di-type-bound}]
Here we follow the proof of \cite[Lemma 18.6]{maynard2025primes}, using \cref{thm:di-type-bound} instead of \cite[Theorem 9]{deshouillers1982kloosterman}. As in \cref{lem:split-bfi-eq}, we need to eliminate the dependency of the inner exponential coefficients on $\ell_1$ and $\ell_2$, so we write
\[
\begin{aligned}
    \mB_{\neq} 
    &= 
    \sum_{t \sim T} \sum_{e_0 \sim E_0}\,
    \sum_{\substack{e_1', e_2' \sim E/e_0 \\ (e_1', e_2') = 1}}
    \sum_{\substack{s \sim S \\ s \mid te_0 \\ (s, e_1'e_2'b) = 1}}\,
    \sum_{\hat{\mu} \in \Z/te_0e_1'e_2'\Z}
    \sum_{\substack{\ell_1, \ell_2 \sim L \\ \ell_1 \equiv \ell_2 \pmod{te_0} \\ (\ell_1 \ell_2, te_0) = 1 \\ (\ell_1, e_1') = (\ell_2, e_2') = 1 \\ \ell_1 \neq \ell_2 \\ \mu(\ell_1, \ell_2) \equiv \hat{\mu} \pmod{te_0e_1'e_2'}}} 
    \\
    &\times 
    \sum_{\substack{h_1, h_2 \sim H \\ h_1e_1'\ell_2 \neq h_2e_2'\ell_1}}
    \left\vert 
    \sum_{\substack{k' \\ (k', e_1'e_2'\ell_1\ell_2b) = 1}} g_0 \left(\frac{k'}{K/S}\right)
    \sum_{|j| \sim J} \e\left(j \omega\right)\, 
    S\left( (h_1e_1'\ell_2 - h_2e_2'\ell_1)\, \bar{b \ell_1 \ell_2 e_1' e_2'}, j; k's\right)
    \right\vert,
\end{aligned}
\]
where
\[
    \omega = \omega(t, e_0, e_1', e_2', \hat{\mu}, d, w) := \frac{\hat{\mu}}{te_0e_1'e_2'} - w \in \R/\Z.
\]
This is essentially the exponential sum anticipated in \cref{eq:sketch-final-sum}.

We then let $\ell_0 := (\ell_1, \ell_2)$, $\ell_1' := \ell_1/\ell_0$, $\ell_2' := \ell_2/\ell_0$, and put the variables
\[
    m := h_1e_1'\ell_2' - h_2e_2'\ell_1' \ll \frac{HEL}{E_0 \ell_0}, \qquad\qquad
    r := b \ell_0 \ell_1' \ell_2' e_1' e_2' \asymp x^{O(\eta)} \frac{L^2 E^2}{\ell_0 E_0^2},
\]
and $\ell_0$ into dyadic ranges $m \sim M_{\neq}$, $r \sim R_{\neq}$, $\ell_0 \sim L_0$ to obtain 
\begin{equation} \label{eq:bneq-bound}
    \mB_{\neq} \ll_\eta x^{O(\eta)} \sup_{\substack{L_0 \ll L \\ M_{\neq} \ll HEL/(E_0L_0) \\ R_{\neq} \asymp x^{O(\eta)} L^2E^2/(L_0E_0^2)}} |\mB_{\neq}'|,
\end{equation}
where
\[
\begin{aligned}
    \mB_{\neq}' 
    &:=
    \sum_{t \sim T} \sum_{e_0 \sim E_0}\,
    \sum_{\substack{e_1', e_2' \sim E/e_0 \\ (e_1', e_2') = 1}}\,
    \sum_{\hat{\mu} \in \Z/te_0e_1'e_2'\Z}
    \\
    &\times 
    \sum_{\substack{r \sim R_{\neq} \\ be_1'e_2' \mid r \\ s \sim S \\ (s, r) = 1 \\ s \mid te_0}}
    \sum_{\substack{\ell_0 \sim L_0 \\ \ell_1', \ell_2' \sim L/\ell_0 \\ (\ell_1', \ell_2') = 1, \ell_1' \neq \ell_2' \\ b\ell_0 \ell_1' \ell_2' e_1' e_2' = r \\ \ell_1' \equiv \ell_2' \pmod{te_0} \\ (\ell_1', e_1') = (\ell_2', e_2') = 1 \\ \mu(\ell_0\ell_1', \ell_0\ell_2') \equiv \hat{\mu} \\ \pmod{te_0e_1'e_2'}}} 
    \sum_{m \sim M_{\neq}}
    \sum_{\substack{h_1, h_2 \sim H \\ h_1e_1'\ell_2 - h_2e_2'\ell_1 = m}}
    \left\vert 
    \sum_{\substack{k' \\ (k', r) = 1}} g_0 \left(\frac{k'}{K/S}\right)
    \sum_{|j| \sim J} \e\left(j \omega\right)\, 
    S\left(m \bar{r}, j; k's\right)
    \right\vert.
\end{aligned}
\]
As before, once the variables $t, e_0, e_1', e_2', \hat{\mu}$ are fixed, $\omega$ does not depend on $r, s, m, h_1, h_2, k'$.
We remove the absolute values by inserting $1$-bounded coefficients $\xi_{h_1,h_2}$ (also depending on $t, e_0, e_1', e_2', \hat{\mu}$ and $r, s, m$), and denote
\[
    a_{m,r,s} = a_{m,r,s}\left(t, e_0, e_1', e_2', \hat{\mu}\right) := 
    \one_{b e_1'e_2' \mid r}
    \one_{s \mid te_0}
    \sum_{\substack{\ell_0 \sim L_0 \\ \ell_1', \ell_2' \sim L/\ell_0 \\ (\ell_1', \ell_2') = 1, \ell_1' \neq \ell_2' \\ b\ell_0 \ell_1' \ell_2' e_1' e_2' = r \\ \ell_1' \equiv \ell_2' \pmod{te_0} \\ (\ell_1', e_1') = (\ell_2', e_2') = 1 \\ \mu(\ell_0\ell_1', \ell_0\ell_2') \equiv \hat{\mu} \\ \pmod{te_0e_1'e_2'}}}
    \sum_{\substack{h_1, h_2 \sim H \\ h_1e_1'\ell_2' - h_2e_2'\ell_1' = m}} 
    \xi_{h_1, h_2},
\]
to obtain
\begin{equation} \label{eq:bneqprime-bound}
    \mB_{\neq}' \ll_\eta x^{O(\eta)} 
    \sum_{t \sim T} \sum_{e_0 \sim E_0}\,
    \sum_{\substack{e_1', e_2' \sim E/e_0 \\ (e_1', e_2') = 1}}\,
    \sum_{\hat{\mu} \in \Z/te_0e_1'e_2'\Z}
    |\mK_{\neq}|,
\end{equation}
where
\[
    \mK_{\neq} := \sum_{\substack{r \sim R_{\neq} \\ s \sim S \\ (r, s) = 1}} 
    \sum_{m \sim M_{\neq}} a_{m,r,s}
    \sum_{|j| \sim J} \e(j \omega)
    \sum_{\substack{k' \\ (k', r) = 1}} 
    g_0 \left(\frac{k'}{K/S}\right) S\left(m\bar{r}, j; k's\right).
\]
This is roughly the sum of Kloosterman sums anticipated in \cref{eq:sketch-kloosterman-2}.
By \cref{thm:di-type-bound}, we have
\[
\begin{aligned}
    \mK_{\neq} 
    \ll_\eta\ &x^{O(\eta)}
    \left(1 + \frac{K/S}{R_{\neq} \sqrt{S}}\right)^{\theta_{\max}}
    \|a_{m,r,s}\|_2\, \sqrt{J R_{\neq} S}
    \\
    &\times 
    \left(\frac{K^2/S^2}{R_{\neq}} \left(M_{\neq} + R_{\neq} S\right)\left(J + R_{\neq} S\right) + M_{\neq}J \right)^{1/2},
\end{aligned}
\]
where by Cauchy--Schwarz,
\[
\begin{aligned}
    &\sum_{t \sim T} \sum_{e_0 \sim E_0}\,
    \sum_{\substack{e_1', e_2' \sim E/e_0 \\ (e_1', e_2') = 1}}\,
    \sum_{\hat{\mu} \in \Z/(te_0e_1'e_2'\Z)} \|a_{m,r,s}\|_2
    \\ 
    &\ll 
    \frac{TE^2}{E_0} \sqrt{
    \sum_{t \sim T} \sum_{e_0 \sim E_0}\,
    \sum_{\substack{e_1', e_2' \sim E/e_0 \\ (e_1', e_2') = 1}} \sum_{\substack{s \sim S \\ s \mid te_0}} \sum_{\substack{\ell_0 \sim L_0 \\ \ell_1', \ell_2' \sim L/\ell_0 \\ (\ell_1', \ell_2') = 1, \ell_1' \neq \ell_2' \\ \ell_1' \equiv \ell_2' \pmod{te_0} \\ (\ell_1', e_1') = (\ell_2', e_2') = 1}} 
    \sum_{m \sim M_{\neq}} 
    \left(\sum_{\substack{h_1, h_2 \sim H \\ h_1 e_1' \ell_2' - h_2 e_2' \ell_1' = m}} 1\right)^2
    } 
    \\
    &\ll \frac{TE^2}{E_0} \sqrt{
    L_0
    \sum_{e_1', e_2' \ll E/E_0}
    \sum_{\substack{\ell_1', \ell_2' \asymp L/L_0 \\ (\ell_1'e_2', \ell_2'e_1') = 1 \\ \ell_1' \neq \ell_2'}}
    \tau(\ell_1' - \ell_2')^3
    \sum_{m \sim M_{\neq}}
    \sum_{\substack{h_1, h_2 \sim H \\ h_1 e_1' \ell_2' - h_2 e_2' \ell_1' = m}} 
    \sum_{\substack{h_1', h_2' \sim H \\ h_1' e_1' \ell_2' - h_2' e_2' \ell_1' = m}} 1
    }
    \\
    &\ll_\eta
    x^{O(\eta)} \frac{TE^2}{E_0} \sqrt{L_0
    \sum_{m \sim M_{\neq}}\,
    \sum_{\substack{h_1 \sim H \\ e_1' \ll E/E_0 \\ \ell_2' \ll L/L_0 }}
    \sum_{\substack{h_2, e_2', \ell_1' \\ h_2e_2'\ell_1' = h_1 e_1' \ell_2' - m \\ (e_2'\ell_1', e_1'\ell_2') = 1}}
    \ \sum_{\substack{h_1' \sim H \\ h_1'e_1'\ell_2' \equiv m \pmod{e_2'\ell_1'}}} 
    \ \sum_{\substack{h_2' \sim H \\ h_2'e_2'\ell_1' = h_1'\ell_2'e_1' - m}} 1
    }
    \\
    &\ll_\eta 
    x^{O(\eta)} \frac{TE^2}{E_0} \left(L_0 M_{\neq} H \frac{E}{E_0} \frac{L}{L_0} \left(1 + \frac{HE_0L_0}{EL}\right)\right)^{1/2}
    \\
    &\ll 
    x^{O(\eta)} \left(\frac{T^2 E^5 M_{\neq} H L}{E_0^2} \left(1 + \frac{H L_0}{EL}\right)\right)^{1/2}.
\end{aligned}
\]
Plugging these bounds into \cref{eq:bneqprime-bound}, we find that
\[
\begin{aligned}
    \mB_{\neq}' \ll_\eta\ &x^{O(\eta)}
    \left(1 + \frac{K/S}{R_{\neq} \sqrt{S}}\right)^{\theta_{\max}}
    \left(\frac{T^2 E^5 M_{\neq} H L}{E_0^2} \left(1 + \frac{H L_0}{EL}\right)\right)^{1/2} \left(J R_{\neq} S\right)^{1/2}
    \\
    &\times 
    \left(\frac{K^2/S^2}{R_{\neq}} \left(M_{\neq} + R_{\neq} S\right)\left(J + R_{\neq} S\right) + M_{\neq}J \right)^{1/2}.
\end{aligned}
\]
Recalling that $J \ll (K T E^2 x^\eta)/(NE_0)$ (from \cref{lem:splitting-bfi}), $M_{\neq} \ll HEL/(E_0 L_0)$ and $R_{\neq} \asymp x^{O(\eta)} L^2 E^2 / (L_0 E_0^2)$ (from \cref{eq:bneq-bound}), this yields
\[
\begin{aligned}
    \mB_{\neq}' &\ll_\eta x^{O(\eta)} \left(1 + \frac{K L_0 E_0^2}{L^2 E^2 S^{3/2}}\right)^{\theta_{\max}}
    \left(\frac{T^2 E^5 HEL H L}{E_0^2 E_0 L_0} \left(1 + \frac{H L_0}{EL}\right)\right)^{1/2} 
    \left(\frac{KTE^2}{NE_0} \frac{L^2 E^2 }{L_0 E_0^2 } S \right)^{1/2}
    \\
    &\times
    \left(\frac{K^2 L_0 E_0^2}{L^2 E^2 S^2} \left(\frac{HEL}{E_0L_0} + \frac{L^2 E^2}{L_0 E_0^2} S\right)\left(\frac{KTE^2}{NE_0} + \frac{L^2 E^2}{L_0 E_0^2}S\right) + \frac{HE^3L K T}{E_0^2L_0 N}\right)^{1/2}.
\end{aligned}
\]
Note that this expression is nonincreasing in the GCD parameter $L_0$, since $\theta_{\max} \le 1/2$; thus lower-bounding $L_0 \gg 1$, and then using that $1 \ll S \ll TE_0$ (from \cref{lem:splitting-bfi}), we get
\[
\begin{aligned}
    \mB_{\neq}' &\ll_\eta x^{O(\eta)} \left(1 + \frac{K E_0^2}{L^2 E^2 S^{3/2}}\right)^{\theta_{\max}}
    \left(\frac{T^2 E^6 H^2 L^2}{E_0^3} \left(1 + \frac{H}{EL}\right)\right)^{1/2} 
    \left(\frac{KTE^2}{NE_0} \frac{L^2 E^2}{E_0^2} \right)^{1/2}
    \\
    &\times
    \left(\frac{K^2 E_0^2}{L^2 E^2} \left(\frac{HEL}{E_0 S} + \frac{L^2 E^2}{E_0^2}\right)\left(\frac{KTE^2}{NE_0} + \frac{L^2 E^2}{E_0^2}S\right) + \frac{HE^3L K T}{E_0^2 N} S\right)^{1/2}
    \\
    &\ll x^{O(\eta)} \left(1 + \frac{K E_0^2}{L^2 E^2}\right)^{\theta_{\max}}
    \left(\frac{T^2 E^6 H^2 L^2}{E_0^3} \left(1 + \frac{H}{EL}\right)\right)^{1/2} 
    \left(\frac{KTE^2}{NE_0} \frac{L^2 E^2}{E_0^2} \right)^{1/2}
    \\
    &\times
    \left(\frac{K^2 E_0^2}{L^2 E^2} \left(\frac{HEL}{E_0} + \frac{L^2 E^2}{E_0^2}\right)\left(\frac{KTE^2}{NE_0} + \frac{L^2 E^2 T}{E_0}\right) + \frac{HE^3L K T^2}{E_0 N} \right)^{1/2}.
\end{aligned}
\]
Finally, this expression is nonincreasing in the $E_0$ parameter even after extracting a factor of $E_0^{-1}$, so lower-bounding $E_0 \gg 1$ yields
\[
\begin{aligned}
    \mB_{\neq}' &\ll_\eta \frac{x^{O(\eta)}}{E_0}
    \left(1 + \frac{K}{L^2 E^2}\right)^{\theta_{\max}}
    \left(1 + \frac{H}{EL}\right)^{1/2} 
    \left(T^2 E^6 H^2 L^2 \frac{KTE^2}{N} L^2 E^2\right)^{1/2}
    \\
    &\times
    \left(\frac{K^2}{L^2 E^2} \left(HEL + L^2 E^2\right)\left(\frac{KTE^2}{N} + L^2 E^2 T\right) + \frac{HE^3L K T^2}{N} \right)^{1/2}.
\end{aligned}
\]
Due to our assumption that $EHT \ll x^{O(\eta)} KNL$, we have
\[
    x^{-O(\eta)}\frac{HE^3LKT^2}{N} \ll  K^2 L^2 E^2 T
    =
    \frac{K^2}{L^2E^2} L^2E^2 L^2 E^2 T,
\]
so we may ignore the term of $HE^3LKT^2/N$ on the second line to obtain
\[
\begin{aligned}
    \mB_{\neq}' &\ll_\eta \frac{x^{O(\eta)}}{E_0}
    \left(1 + \frac{K}{L^2 E^2}\right)^{\theta_{\max}}
    \left(1 + \frac{H}{EL}\right)^{1/2} 
    \left(\frac{KT^3 E^{10} H^2 L^4}{N}\right)^{1/2}
    \\
    &\times
    K \left(\frac{H}{LE} + 1\right)^{1/2} \left(L^2 E^2 T\right)^{1/2} \left(\frac{K}{NL^2} + 1\right)^{1/2}
    \\
    &\ll \frac{x^{O(\eta)}}{E_0} 
    \left(1 + \frac{K}{L^2 E^2}\right)^{\theta_{\max}}
    \left(\frac{K^3 T^4 E^{12} H^2 L^6}{N}\right)^{1/2}
    \left(1 + \frac{H}{LE}\right) \left(1 + \frac{K}{NL^2}\right)^{1/2}
    \\
    &\ll \frac{x^{O(\eta)}}{E_0} \frac{KE^2}{N} 
    \left(1 + \frac{K}{L^2 E^2}\right)^{\theta_{\max}}
    \left(K T^4 E^8 H^2 L^6 N\right)^{1/2}
    \left(1 + \frac{H}{LE}\right) \left(1 + \frac{K}{NL^2}\right)^{1/2}.
\end{aligned}
\]
Rearranging factors (and combining this with \cref{eq:bneq-bound}), we obtain the desired bound.
\end{proof}

Combining our results so far, we obtain the following general estimate.

\begin{proposition}[BFI-style bound with general parameters] \label{prop:bfi-expo-2}
For $\eta \in (0, 1)$, $K, N, T, E, H, L \ll x$, and any positive integers $b \ll x^{O(\eta)}$ and $d \ll x$, assuming that $EHT \ll x^{O(\eta)} KNL$, one has
\[
\begin{aligned}
    \mB(K, N, T, E, H, L)^2 \ll_\eta\,
    &x^{O(\eta)} \Bigg( N^2 K^2 H^2 T^2 L^2 + K^2 E^2 H^2 T^4 L^2 + N^2 H^4 L^4 + N^2 H^4 T^2 L^2
    \\
    &+ \left(1 + \frac{K^2}{E^4 L^4}\right)^{\theta_{\max}}
    K T^4 E^8 H^2 L^6 N
    \left(1 + \frac{H^2}{E^2L^2}\right) \left(1 + \frac{K}{NL^2}\right)
    \\
    &+
    \left(1 + \frac{K^2}{E^4L^2}\right)^{\theta_{\max}}
    K T^5 E^8 H^2 L^3 N
    \left(1 + \frac{H}{E} + \frac{H^2}{E^2L}\right) \left(1 + \frac{K}{NL} \right)
    \Bigg).
\end{aligned}
\]
\end{proposition}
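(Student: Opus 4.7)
The proof of \cref{prop:bfi-expo-2} is essentially a bookkeeping combination of the three preceding lemmas, so my plan is to simply plug the bounds together and square.

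First, I would apply \cref{lem:splitting-bfi} to decompose $\mB$ into its four trivial contributions $KNTHL + KEHT^2L + NH^2L^2 + NH^2TL$ (coming respectively from the diagonal terms $h_1 e_1 \ell_2 = h_2 e_2 \ell_1$ and from the $j=0$ Fourier coefficient after completion), plus the leading factor $\frac{N}{KE^2} \cdot E_0$ multiplying $\mB_= + \mB_{\neq}$. Then I would feed in the estimates from \cref{lem:split-bfi-eq,lem:split-bfi-neq}; the key cosmetic cancellation is that both bounds carry the prefactor $\frac{x^{O(\eta)} K E^2}{N E_0}$, which exactly cancels the $\frac{N E_0}{K E^2}$ coming out of the splitting lemma. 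In particular, all dependence on the auxiliary dyadic parameter $E_0$ drops out, which is a sanity check that the supremum over $E_0$ in \cref{lem:splitting-bfi} is harmless.

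After this substitution, $\mB$ is bounded by a sum of six terms in $x^{O(\eta)}$. Squaring and using the elementary inequality $(a_1 + \cdots + a_6)^2 \ll a_1^2 + \cdots + a_6^2$ produces the six summands displayed in the proposition, with the only subtlety being that the factors $\bigl(1 + K/(E^2 L)\bigr)^{\theta_{\max}}$ and $\bigl(1 + K/(E^2 L^2)\bigr)^{\theta_{\max}}$ from \cref{lem:split-bfi-eq,lem:split-bfi-neq}, once squared, become $\bigl(1 + K/(E^2 L)\bigr)^{2\theta_{\max}} \ll \bigl(1 + K^2/(E^4 L^2)\bigr)^{\theta_{\max}}$ and likewise $\bigl(1 + K^2/(E^4 L^4)\bigr)^{\theta_{\max}}$; the other squared factors $(1 + H/E + H^2/(E^2 L))$, $(1 + H/(EL))^2 \ll (1 + H^2/(E^2 L^2))$, $(1 + K/(NL))$ and $(1 + K/(NL^2))$ match the statement directly.

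The hypothesis $EHT \ll x^{O(\eta)} KNL$ is needed precisely at the final simplification steps inside \cref{lem:split-bfi-eq,lem:split-bfi-neq} (to discard the $\frac{HKT^2 E^3}{NE_0}$ and $\frac{HE^3 LKT^2}{E_0^2 N}$ terms), and is inherited here without further use. There is no genuine obstacle in this step: the entire proof is a routine combination, and the only thing to be careful about is tracking the $E_0$-cancellation and the $\theta_{\max}$-exponent bookkeeping when squaring. Accordingly, the written proof can be as short as a couple of lines stating ``combine \cref{lem:splitting-bfi,lem:split-bfi-eq,lem:split-bfi-neq}, observe that the $E_0$-factors cancel, and square.''
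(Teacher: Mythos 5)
Your proposal is correct and matches the paper's proof, which is exactly the one-line ``combine \cref{lem:splitting-bfi,lem:split-bfi-neq,lem:split-bfi-eq} and square.'' The bookkeeping details you spell out — the $E_0$-cancellation between the prefactor $\frac{N}{KE^2}E_0$ and the $\frac{KE^2}{NE_0}$ in both lemmas, squaring via $(a_1+\cdots+a_6)^2\ll\sum a_i^2$, and the absorptions $(1+a)^{2\theta_{\max}}\ll(1+a^2)^{\theta_{\max}}$ and $(1+H/(EL))^2\ll 1+H^2/(E^2L^2)$ — are all exactly what the paper's terse proof leaves implicit.
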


\begin{proof}[Proof of \cref{prop:bfi-expo-2} assuming \cref{thm:di-type-bound}]
This follows by putting together \cref{lem:splitting-bfi,lem:split-bfi-neq,lem:split-bfi-eq} and squaring (the second line comes from \cref{lem:split-bfi-neq}, and the third line from \cref{lem:split-bfi-eq}).
\end{proof}

Finally, we use \cref{prop:bfi-expo-2} and the conditions from \cref{eq:conditions} to prove \cref{prop:bfi-expo}.

\begin{proof}[Proof of \cref{prop:bfi-expo} assuming \cref{thm:di-type-bound}]
Let $\theta := \theta_{\max}$; we will soon pick a value for $E$ such that \cref{eq:E-conditions} holds. We can assume without loss of generality that $K', N', T', E, H', L' \gg 1$, since otherwise the sum in \cref{prop:bfi-expo} is void. 
We now apply the bound in \cref{prop:bfi-expo-2} (which is increasing in all six parameters) for the parameters $K', N', T', E, H', L'$ from \cref{prop:bfi-expo}, noting that 
\[
\begin{aligned}
    EH'T' &\ll x^{O(\eta)} ERM^{-1} NL (ER)^{-1} 
    \\
    &= x^{O(\eta)} NL M^{-1} 
    \ll x^{O(\eta)} NL
    \ll x^{O(\eta)} K' N' L'.
\end{aligned}
\]
Plugging in the bounds $K' \ll NL x^{O(\eta)}$, $N' \asymp N x^{O(\eta)}$, $L' \asymp L x^{O(\eta)}$, $T' \ll NL(RE)^{-1} x^{O(\eta)}$, $H' \ll RNL x^{O(\eta)-1}$, we obtain
\[
\begin{aligned}
    &\mB(K', N', T', E, H', L')^2 \ll_\eta x^{O(\eta)}
    \\
    &\quad \times \Bigg( N^2 (NL)^2 \left(\frac{RNL}{x}\right)^2 \left(\frac{NL}{RE}\right)^2 L^2 + (NL)^2 E^2 \left(\frac{RNL}{x}\right)^2 \left(\frac{NL}{RE}\right)^4 L^2 
    \\
    &\quad + N^2 \left(\frac{RNL}{x}\right)^4 L^4 + N^2 \left(\frac{RNL}{x}\right)^4 \left(\frac{NL}{RE}\right)^2 L^2
    \\
    &\quad + \left(1 + \frac{(NL)^2}{E^4L^4}\right)^\theta NL \left(\frac{NL}{RE}\right)^4 E^8 \left(\frac{RNL}{x}\right)^2 L^6 N \left(1 + \frac{\left(\frac{RNL}{x}\right)^2}{E^2 L^2}\right)
    \\
    &\quad + \left(1 + \frac{(NL)^2}{E^4L^2} \right)^\theta NL \left(\frac{NL}{RE}\right)^5 E^8 \left(\frac{RNL}{x}\right)^2 L^3 N \left(1 + \frac{\left(\frac{RNL}{x}\right)}{E} + \frac{\left(\frac{RNL}{x}\right)^2}{E^2 L}\right).
\end{aligned}
\]
Simplifying terms and dividing both sides by $N^4 L^6$, we further get
\[
\begin{aligned}
    \frac{\mB(K', N', T', E, H', L')^2}{N^4 L^6} \ll_\eta 
    x^{O(\eta)} &\Bigg( \frac{N^4 L^2}{x^2 E^2} + \frac{N^4 L^4}{R^2 x^2 E^2} + \frac{N^2 L^2 R^4}{x^4} + \frac{N^4 L^2 R^2}{x^4 E^2}
    \\
    &+ \left(1 + \frac{N^2}{E^4 L^2}\right)^{\theta} \frac{N^4 L^7 E^4}{x^2 R^2} \left(1 + \frac{N^2 R^2}{x^2 E^2}\right)
    \\
    &+ \left(1 + \frac{N^2}{E^4} \right)^{\theta} \frac{N^5 L^5 E^3}{x^2 R^3} \left( 
    1 + \frac{NLR}{xE} + \frac{N^2 L R^2}{x^2 E^2}
    \right) \Bigg),
\end{aligned} 
\]
and we wish to show that the right-hand side is $\ll x^{O(\eta) - \eps}$. To handle the term of $N^4 L^2 x^{-2} E^{-2}$, we require that $N^2 L \ll x^{1-\eps} E$; thus we pick
\begin{equation} \label{eq:E-choice}
    E := \max\left( x^{4\eps}, \frac{N^2 L}{x^{1-\eps}} \right).
\end{equation}
For \cref{eq:E-conditions} to hold, we also need to have $E \ll x^{-\eps} NL R^{-1}$, so we impose the restrictions
\[
    R \ll x^{-5\eps} NL \qquad\quad \text{and} \qquad\quad N \ll \frac{x^{1-2\eps}}{R}
\]
(which are part of \cref{eq:conditions}). The fact that $NR \ll x$ simplifies our expression a bit; combined with the fact that $E \ll x^{-\eps} N L R^{-1} \ll NL R x^{-1}$ (due to $x^{1-\eps} \ll R^2$ from \cref{eq:conditions}), this shows that
\[
    \frac{N^2 R^2}{x^2 E^2} \ll 1 \qquad\quad \text{and} \qquad\quad
    \max\left(\frac{N^2 L R^2}{x^2 E^2}, 1\right) \ll \frac{N L R}{x E}.
\]
Moreover, since $x^{(1-\eps)/2} \ll R$ by \cref{eq:conditions}, we have $NR \ll x^{1-2\eps} \ll R^2$, so $N \ll R$, which implies
\[
    \frac{N^4 L^2 R^2}{x^4 E^2} \ll \frac{N^2 L^2 R^4}{x^4}.
\]
Overall, it remains to bound the expression
\begin{equation} \label{eq:final-to-bound}
    \frac{N^4 L^4}{R^2 E^2 x^2} + \frac{N^2 L^2 R^4}{x^4}
    + \left(1 + \frac{N^2}{E^4 L^2}\right)^{\theta} \frac{N^4 L^7 E^4}{x^2 R^2}
    + \left(1 + \frac{N^2}{E^4}\right)^{\theta} \frac{N^6 L^6 E^2}{x^3 R^2}
\end{equation}
by $O(x^{-\eps})$.

Using $x^{(1-\eps)/2} \ll R \ll NL \ll x^{2/3 - 6\eps}$ (from \cref{eq:conditions}) and $E \ge x^{4\eps}$, the first term is admissible since
\[
    \frac{N^2 L^2}{REx} \ll \frac{x^{4/3}}{x^{3\eps} x^{3/2}}
    =
    x^{-1/6 - 3\eps}.
\]
The (square root of the) second term is similarly bounded:
\[
    \frac{NL R^2}{x^2} \ll x^{(2/3) + (4/3) - 2 - 6\eps} = x^{- 6\eps}. 
\]
For the third term in \cref{eq:final-to-bound}, we use our choice of $E$ from \cref{eq:E-choice} to obtain
\[
    \left(1 + \frac{N^2}{E^4 L^2}\right)^{\theta} \frac{N^4 L^7 E^4}{x^2 R^2}
    \ll 
    \left(1 + \frac{N^2}{L^2}\right)^{\theta} \frac{N^4 L^7}{x^{2-16\eps} R^2}
    +
    \left(1 + \frac{x^4}{N^6 L^6}\right)^{\theta} \frac{N^{12} L^{11}}{x^{6-4\eps} R^2}.
\]
Since $NL \ll x^{2/3}$ by \cref{eq:conditions}, we can ignore the $1$-term in the last parenthesis. For the terms above to be admissible, we require the restrictions
\[
    N^4 L^7 \max(1, N/L)^{2\theta} \ll x^{2-17\eps}R^2, 
    \qquad\quad 
    N^{12-6\theta} L^{11-6\theta} \ll x^{6-4\theta -5\eps} R^2
\]
(which are part of \cref{eq:conditions}). Finally, using that $1 \ll E \ll x^{-\eps} NLR^{-1}$ (from \cref{eq:E-conditions}), we crudely bound the fourth and last term in \cref{eq:final-to-bound} by
\[
    \left(1 + \frac{N^2}{E^4}\right)^{\theta} \frac{N^6 L^6 E^2}{x^3 R^2}
    \ll 
    N^{2\theta} \frac{N^6 L^6 (x^{-\eps}NLR^{-1})^2}{x^3 R^2}
    \ll
    x^{-2\eps} \frac{N^9 L^8}{x^3 R^4},
\]
which is at most $O(x^{-\eps})$ by the last condition in the first line of \cref{eq:conditions}. This completes our proof.
\end{proof}


\section{Deshouillers--Iwaniec-style estimates} \label{sec:deshouillers-iwaniec}

The seminal work \cite{deshouillers1982kloosterman} of Deshouillers--Iwaniec on sums of Kloosterman sums makes repeated use of the Kuznetsov trace formula \cite{kuznetsov1980petersson,motohashi1997spectral}, which is in turn based on the spectral decomposition of $L^2(\Gamma_0(q) \backslash \H)$ with respect to the hyperbolic Laplacian (where $q$ is a positive integer and $\Gamma_0(q)$ is its associated Hecke congruence subgroup). Here we prove \cref{thm:di-type-bound}, which is an optimization of \cite[Theorem 11]{deshouillers1982kloosterman} in the $\theta$-aspect, using the same technology. We note that such optimizations of Deshouillers--Iwaniec bounds (specifically of \cite[Theorem 12]{deshouillers1982kloosterman}) have also been used in \cite{drappeau2023one}.

We will use all of the notation (and normalization) from \cite{deshouillers1982kloosterman}, with the exception of making some dependencies on the level $q$ explicit. In particular, we consider an orthonormal basis of Maass cusp forms $(u_{j,q})_{j \ge 1}$ such that $u_{j,q}$ has eigenvalue $\lambda_{j,q}$ (which increases to $\infty$ as $j \to \infty$), and Fourier coefficients $\rho_{j,\ma}(n)$ when expanding around the cusp $\ma$ of $\Gamma_0(q)$, via an implicit scaling matrix $\sigma_\ma \in \PSL_2(\R)$. We denote 
\[
    \mu(\ma) := \frac{\left(w, \frac{q}{w}\right)}{q},
\]
whenever $\ma$ is equivalent to $u/w$, for some relatively prime $u, w \in \Z_+$ such that $w \mid q$; in particular, one has $\mu(\infty) = q^{-1}$. We also write
\[
    \theta_{j,q} := 2i \kappa_{j,q},
    \qquad\qquad
    \kappa_{j,q}^2 = \lambda_{j,q} - \frac{1}{4}
    \qquad \iff \qquad
    \theta_{j,q}^2 = 1 - 4\lambda_{j,q},
\]
where $\kappa_{j,q}$ is chosen such that either $\kappa_{j,q} \ge 0$ (when $\lambda_{j,q} \ge 1/4$), or $i\kappa_{j,q} > 0$ (when $\lambda_{j,q}$ is exceptional). Recall from \cref{not:exceptional} that $\theta_q := \max_{\lambda_j < 1/4} \theta_{j,q}$ (with $\theta_q := 0$ if there are no exceptional eigenvalues), and that $\theta_{\max} := \sup_q \theta_q$. Also, recall that all exceptional eigenvalues lie in the interval $[3/16, 1/4)$ by \cite[Theorem 4]{deshouillers1982kloosterman} (in fact, the best currently known lower bound is $975/4096$, due to Kim--Sarnak \cite[Appendix 2]{kim2003functoriality}; this is equivalent to \cref{thm:kim-sarnak}).

The contribution of the exceptional Maass forms to the spectral side of the Kuznetsov trace formula would vanish if Selberg's eigenvalue conjecture (\cref{conj:selberg}) were true, but would be dominating in most applications otherwise. To deduce better bounds for the geometric side (which consists of weighted sums of Kloosterman sums), Deshouillers--Iwaniec \cite{deshouillers1982kloosterman} proved a series of large sieve inequalities for the Fourier coefficients of Maass cusp forms, which temper this exceptional contribution in bilinear sums. Remarkably, these results make further use of the Kuznetsov formula, applying it back and forth and ultimately reducing to the Weil bound.

\begin{lemma}[Large sieve inequalities from \cite{deshouillers1982kloosterman}] \label{lem:di-large-sieve}
Given $\eps > 0$, $q \in \Z_+$, $N \gg 1$, a complex sequence $(a_n)_{n \sim N}$, a cusp $\ma$ of $\Gamma_0(q)$, and an associated scaling matrix $\sigma_{\ma}$, one has
\begin{equation} 
    \label{eq:like-di-thm-5}
    \sum_{\substack{j \ge 1 \\ \lambda_{j,q} < 1/4}}
    X^{\theta_{j,q}} 
    \left\vert 
    \sum_{n \sim N} a_n\, \rho_{j,\ma}(n)
    \right\vert^2 
    \ll_\eps 
    (QN)^\eps
    \left(1 + \mu(\ma) N\right) \|a_n\|_2^2,
\end{equation}
for any $0 < X \ll \max(1, \mu(\ma)^{-1}N^{-1})$.
Moreover, if $(\ma, \sigma_{\ma}) = (\infty, \Id)$, then given $Q \gg 1$ and $\alpha \in \R/\Z$, one has
\begin{equation}
    \label{eq:like-di-thm-7}
    \frac{1}{Q} \sum_{q \sim Q} 
    \sum_{\substack{j \ge 1 \\ \lambda_{j,q} < 1/4}}
    X^{\theta_{j,q}}
    \left\vert \sum_{n \sim N} \e(n\omega)\, \rho_{j,\infty}(n) \right\vert^2
    \ll_\eps 
    (QN)^\eps \left(1 + Q^{-1}N\right) N,
\end{equation}
in the larger range $0 < X \ll \max\left(N, Q^2 N^{-1}\right)$.
\end{lemma}

\begin{proof}
The bounds in \cref{eq:like-di-thm-5,eq:like-di-thm-7} follow immediately from \cite[Theorems 5 and 7]{deshouillers1982kloosterman} respectively. We note that changing the choice of the scaling matrix $\sigma_{\ma}$ results in multiplying the Fourier coefficients $\rho_{j,\ma}(n)$ by an exponential phase $\e(n\omega)$; thus in \cref{eq:like-di-thm-7}, using an arbitrary value of $\omega$ is equivalent to using an arbitrary (but consistent) choice of the scaling matrix $\sigma_\infty$.

We also remark that the proof of \cite[Theorem 7]{deshouillers1982kloosterman} from \cite[Section 8.3]{deshouillers1982kloosterman} only considers the case $\omega = 0$ (and $\sigma_\infty = \Id$), but the same proof extends to any $\omega \in \R/\Z$ (or equivalently, to any valid scaling matrix $\sigma_\infty$); this was already noted, for instance, in \cite[Lemma 5]{bombieri1987primes2}. Ultimately, this is because the proof of \cite[Theorem 14]{deshouillers1982kloosterman} also extends to sums with additional weights of $\e(m\omega_1)\, \e(n\omega_2)$.
\end{proof}

\begin{remark}
The large sieve inequalities in \cite{deshouillers1982kloosterman} are stated for general values of $X$ in the left-hand sides (resulting in right-hand sides that depend on $X$), and are equivalent to those given in \cref{lem:di-large-sieve}. Indeed, to recover large sieve inequalities with an arbitrary $X > 0$ in the left-hand sides, it suffices to multiply the right-hand sides by $(1 + (X/X_0)^{\theta_q})$, where $X_0$ is the best allowable value in \cref{lem:di-large-sieve}.

We find the versions stated above easier to apply optimally in the $\theta$-aspect, and also easier to compare, by contrasting the maximal permitted values of $X$ (recalling that $\mu(\infty) = q^{-1}$).
\end{remark}

We now adapt the proof of \cite[Theorem 11]{deshouillers1982kloosterman}, making the dependence on $\theta_{\max}$ explicit.

\begin{theorem}[\cite{deshouillers1982kloosterman}-type multilinear Kloosterman bound] \label{thm:di-kloosterman}
Let $C, M, N, R, S \gg 1$, $(b_{n,r,s})$ be a complex sequence, and $\omega \in \R/\Z$. Then given a 5-variable smooth function $g(t_1, \ldots, t_5)$ with compact support in $t_1 \asymp 1$, and bounded derivatives $\|\frac{\partial^{\sum j_i}}{\prod (\partial t_i)^{j_i}} g\|_\infty \ll_{j_1,\ldots,j_5} 1$, one has
\begin{equation} \label{eq:like-di-thm-11}
\begin{aligned}
    \sum_{\substack{r \sim R \\ s \sim S \\ (r, s) = 1}} 
    \sum_{\substack{m \sim M \\ n \sim N}} \e(m\omega)\, b_{n,r,s}
    &\sum_{(c, r) = 1} 
    g\left(\frac{c}{C}, \frac{m}{M}, \frac{n}{N}, \frac{r}{R}, \frac{s}{S}\right)\, S(m\bar{r}, \pm n; sc)
    \\
    \ll_\eps
    (CMNRS)^\eps 
    &\left(1 + \frac{CS\sqrt{R}}{\max(M, RS) \sqrt{\max(N, RS)}} \right)^{\theta_{\max}} 
    \sqrt{MRS}\, \|b_{n,r,s}\|_2\,
    \\
    &\times
    \frac{\left(CS\sqrt{R} + \sqrt{MN} + C\sqrt{SM}\right)\left(CS\sqrt{R} + \sqrt{MN} + C\sqrt{SN}\right)}{CS\sqrt{R} + \sqrt{MN}}.
\end{aligned}
\end{equation}
\end{theorem}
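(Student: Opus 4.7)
The strategy is to proceed along the lines of the proof of \cite[Theorem 11]{deshouillers1982kloosterman}, while substituting Lemma~\ref{lem:di-large-sieve} at the crucial step where the exceptional spectrum is controlled, in order to make the dependence on $\theta_{\max}$ explicit. First, I would decouple the $c$-variable from the others: using a Fourier- or Mellin-type decomposition of $g$ in the variables $t_2, \ldots, t_5$, the problem reduces (at the cost of a factor $(CMNRS)^\eps$ and modified but still $1$-bounded outer coefficients, with the phase $\e(m\omega)$ preserved) to estimating an inner sum $\sum_{(c,r)=1} \tilde{g}(c/C)\, S(m\bar{r}, \pm n; sc)$ for a fixed one-variable test function $\tilde{g}$.

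Next, I would apply the Kuznetsov trace formula with modulus $sc$ (using that $(r, sc)=1$). The inner sum expands into a spectral sum over the cusps of $\Gamma_0(sc)$, with contributions from Maass cusp forms $u_{j,sc}$, Eisenstein series, and (in the $+n$ case) holomorphic cusp forms; the transform of $\tilde{g}$ is evaluated at the spectral parameter $\kappa_{j,sc}$. For exceptional eigenvalues $\lambda_{j,sc} < 1/4$, this transform is of size at most $Y^{\theta_{j,sc}}$ up to $(CMNRS)^\eps$, where
\[
    Y \;\asymp\; \frac{CS\sqrt{R}}{\max(M, RS)\sqrt{\max(N, RS)}}.
\]

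The contributions of the regular discrete spectrum, the continuous spectrum, and the holomorphic forms would then be bounded by the standard Deshouillers--Iwaniec spectral estimates, yielding the main factor
\[
    \sqrt{MRS}\,\|b_{n,r,s}\|_2 \cdot \frac{\bigl(CS\sqrt{R} + \sqrt{MN} + C\sqrt{SM}\bigr)\bigl(CS\sqrt{R} + \sqrt{MN} + C\sqrt{SN}\bigr)}{CS\sqrt{R} + \sqrt{MN}}
\]
with no $\theta$-dependence. For the exceptional contribution, Cauchy--Schwarz in the spectral parameter $j$ would split it into the product of an $n$-part (carrying the coefficients $b_{n,r,s}$) and an $m$-part (carrying the exponential phase $\e(m\omega)$), after rearranging the outer $r, s$-summations. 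The $n$-part is controlled by \cref{eq:like-di-thm-5}, which permits $X \ll \max(1, RS/N)$ (the bound $\mu(\ma)^{-1} \ll RS$ reflecting the available cusps of $\Gamma_0(sc)$ after absorbing $r$ via Cauchy--Schwarz); the $m$-part is controlled by \cref{eq:like-di-thm-7}, which admits the much larger range $X \ll \max(M, (RS)^2/M)$. Choosing $X$ in each factor at the scale $Y$ above, and combining, produces the stated factor $(1 + Y)^{\theta_{\max}}$.

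The main obstacle lies in the careful bookkeeping of cusps $\ma$ of $\Gamma_0(sc)$ and the associated parameters $\mu(\ma)$, which determine the effective level constraints in the large sieve bounds; these must be combined with the averaging over $r \sim R$ (respecting $(r,s)=1$) in a way that converts $SC$ into $RS$ inside the permissible range of $X$. This bookkeeping is already carried out in detail in \cite[\S 7--10]{deshouillers1982kloosterman}; the only new ingredient here is the explicit $X^{\theta}$-dependence provided by \cref{lem:di-large-sieve}, which is what allows the asymmetric denominator $\max(M, RS)\sqrt{\max(N, RS)}$ to appear in the final $\theta_{\max}$-factor.
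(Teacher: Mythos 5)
Your overall plan -- follow the Deshouillers--Iwaniec argument, isolate the exceptional spectrum, Cauchy--Schwarz into $m$- and $n$-parts, control the level-free $m$-part via \cref{eq:like-di-thm-7} and the level-dependent $n$-part via \cref{eq:like-di-thm-5} -- is indeed the strategy the paper uses, and you have correctly identified that the asymmetry between the two large sieve bounds is what produces the asymmetric denominator $\max(M,RS)\sqrt{\max(N,RS)}$. However, there is a genuine conceptual error in how you set up the spectral decomposition. You apply the Kuznetsov formula ``with modulus $sc$'' and speak of cusps of $\Gamma_0(sc)$, Maass forms $u_{j,sc}$, and eigenvalues $\lambda_{j,sc}$. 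The correct level is $rs$, not $sc$: the sum over $c$ \emph{is} the geometric side of the trace formula for $\Gamma_0(rs)$, where $S(m\bar{r}, \pm n; sc)$ appears as the Kloosterman sum attached to the cusp pair $(\infty, 1/s)$ of $\Gamma_0(rs)$ with $sc$ running over admissible moduli. You cannot ``apply Kuznetsov with modulus $sc$'' to the inner sum -- that sum is precisely what the trace formula produces on the geometric side, not something each individual term expands under. This matters concretely: \cref{eq:like-di-thm-7} averages over the level $q \sim Q$, and that level must be $rs$, giving $Q \asymp RS$; likewise $\mu(1/s)^{-1} = rs \asymp RS$ because $1/s$ is a cusp of $\Gamma_0(rs)$ and $(r,s)=1$. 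If the level were $sc$ one would have $\mu(\ma)^{-1} \le sc \asymp SC$, not $RS$, and no amount of ``absorbing $r$ via Cauchy--Schwarz'' (which cannot change the level of a cusp form) would fix this.

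A second, lesser inaccuracy: you assert that the Bessel transform at exceptional parameters is bounded directly by $Y^{\theta_{j,\cdot}}$ with $Y$ already equal to the final denominator quantity. In fact the transform bound is $|\hat{f}(\kappa_{j,rs})| \ll (1+X)^{\theta_{j,rs}}$ with the \emph{larger} $X = CS\sqrt{R}/\sqrt{MN}$. The reduction to $Y = X/\sqrt{X_1 X_2}$ happens only after Cauchy--Schwarz: one writes $(1+X)^{\theta} \le (X_0\sqrt{X_1 X_2})^{\theta}$ with $X_0 = (1+X)/\sqrt{X_1 X_2}$, distributes $X_1^{\theta}$ and $X_2^{\theta}$ into the two large sieve factors, and takes $X_1 = \max(M, R^2S^2/M)$ and $X_2 = \max(1, RS/N)$, the largest values the inequalities in \cref{lem:di-large-sieve} allow. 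The residual $(1+X_0)^{\theta_{\max}}$ is then what yields the stated factor. Your phrase ``choosing $X$ in each factor at the scale $Y$'' describes a different (and incorrect) mechanism.
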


\begin{proof}
We follow the proof of \cite[Theorem 11]{deshouillers1982kloosterman} in \cite[Section 9.1]{deshouillers1982kloosterman}, reducing to the case of smooth functions of the form $\frac{CS\sqrt{R}}{cs\sqrt{r}} f \left(\frac{4\pi \sqrt{mn}}{cs\sqrt{r}}\right)$ (up to using slightly different values of $\omega$ and $b_{n,r,s}$); here $f(t)$ is a smooth function supported in $t \asymp X^{-1}$, for $X := CS\sqrt{R}/\sqrt{MN}$. After applying the Kuznetsov formula, we bound the contribution of the exceptional spectrum more carefully; as in \cite[Section 9.1]{deshouillers1982kloosterman}, this is given by
\[
    \mS_{\text{exc}} := CS\sqrt{R}
    \sum_{\substack{r \sim R \\ s \sim S \\ (r, s) = 1}}
    \sum_{\substack{j \ge 1 \\ \lambda_{j,rs} < 1/4}} \frac{\hat{f}(\kappa_{j,rs})}{\ch(\pi \kappa_{j,rs})}
    \left( \sum_{m \sim M} \e(m\omega)\, \bar{\rho_{j,\infty}}(m) \right)
    \left( \sum_{n \sim N} b'_{n,r,s}\, \rho_{j,1/s}(n)
    \right),
\]
where
\[
    b'_{n,r,s} := \e\left(-n \frac{\bar{s}}{r}\right) b_{n,r,s}.
\]
Using the bounds $\ch(\pi \kappa_{j,rs}) \asymp 1$ and 
\[
    |\hat{f}(\kappa_{j,rs})| \ll \frac{1 + X^{2|\kappa_{j,rs}|}}{1 + X^{-1}} 
    \ll 
    \frac{(1 + X)^{\theta_{j,rs}}}{1+X^{-1}}
\]
(see \cite[(7.1)]{deshouillers1982kloosterman}), and denoting
\[
    X_0 := \frac{1 + X}{\sqrt{X_1 X_2}}
    \le 
    1 + \frac{X}{\sqrt{X_1 X_2}},
\]
for some $X_1, X_2 \ge 1$ to be chosen shortly, we obtain
\[
\begin{aligned}
    \mS_{\text{exc}} 
    \ll
    CS\sqrt{R}
    \sum_{\substack{r \sim R \\ s \sim S \\ (r, s) = 1}}
    \sum_{\substack{j \ge 1 \\ \lambda_{j,rs} < 1/4}} \frac{\left(X_0 \sqrt{X_1 X_2}\right)^{\theta_{j,rs}}}{1+X^{-1}}
    \left\vert \sum_{m \sim M} \bar{\e(m\omega)}\, \rho_{j,\infty}(m) \right\vert
    \left\vert \sum_{n \sim N} b'_{n,r,s}\, \rho_{j,1/s}(n)
    \right\vert
    \\
    \ll
    CS\sqrt{R}\
    \frac{\left(1 + X_0\right)^{\theta_{\max}}}{1 + X^{-1}}
    \left(\sum_{\substack{r \sim R \\ s \sim S}} \sum_{\substack{j \ge 1 \\ \lambda_{j,rs} < 1/4}} X_1^{\theta_{j,rs}} \left\vert \sum_{m \sim M} \e(-m\omega)\, \rho_{j,\infty}(m) \right\vert^2 \right)^{1/2}
    \\
    \times 
    \left(\sum_{\substack{r \sim R \\ s \sim S \\ (r, s) = 1}} \sum_{\substack{j \ge 1 \\ \lambda_{j,rs} < 1/4}} X_2^{\theta_{j,rs}} \left\vert \sum_{n \sim N} b'_{n,r,s}\, \rho_{j,1/s}(n) \right\vert^2 \right)^{1/2},
\end{aligned}
\]
by Cauchy--Schwarz. Recall that $\mu(1/s) = \mu(\infty) = (rs)^{-1}$ since $(r, s) = 1$; thus using the divisor bound and \cref{lem:di-large-sieve}, we conclude that
\[
\begin{aligned}
    \mS_{\text{exc}} \ll_\eps 
    (MNRS)^\eps\,
    CS\sqrt{R}\, \frac{\left(1 + X_0\right)^{\theta_{\max}}}{1 + X^{-1}} 
    \sqrt{RS} 
    \left(1 + \sqrt{\frac{M}{RS}}\right) \sqrt{M}
    \left(1 + \sqrt{\frac{N}{RS}}\right) \|b_{n,r,s}\|_2
    \\
    \ll
    (MNRS)^\eps \left(1 + \frac{CS\sqrt{R}}{\sqrt{MNX_1X_2}}\right)^{\theta_{\max}} \sqrt{MRS}\, \|b_{n,r,s}\|_2
    \\
    \times
    \frac{\left(CS\sqrt{R} + C\sqrt{SM}\right)\left(CS\sqrt{R} + C\sqrt{SN}\right)}{CS\sqrt{R} + \sqrt{MN}},
\end{aligned}
\]
for $X_1 = \max(M, R^2S^2M^{-1})$ (coming from \cref{eq:like-di-thm-7}), and $X_2 = \max(1, RSN^{-1})$ (from \cref{eq:like-di-thm-5}), which gives the desired bound up to minor rearrangements.  As in \cite[(9.4)]{deshouillers1982kloosterman}, the non-exceptional spectrum contributes a similar amount of
\[
\begin{aligned}
    \ll_\eps 
    (MNRS)^\eps \sqrt{MRS}\, \|b_{n,r,s}\|_2
    \frac{\left(CS\sqrt{R} + \sqrt{MN} + C\sqrt{SM}\right)\left(CS\sqrt{R} + \sqrt{MN} + C\sqrt{SN}\right)}{CS\sqrt{R} + \sqrt{MN}},
\end{aligned}
\]
and putting these together completes our proof.
\end{proof}

Finally, \cref{thm:di-type-bound} follows almost immediately from \cref{thm:di-kloosterman}.

\begin{proof}[Proof of \cref{thm:di-type-bound}]
We swap the $m$ and $n$ variables, and pick the second term in each maximum from \cref{eq:like-di-thm-11} for an upper bound, resulting in a $\theta$-factor of
\[
    \left(1 + \frac{C}{R\sqrt{S}} \right)^{\theta_{\max}}.
\]
We also rewrite the last fraction in \cref{eq:like-di-thm-11} as
\[
    CS\sqrt{R} + \sqrt{MN} + C\sqrt{SM} + C\sqrt{SN} + \frac{C^2 S \sqrt{MN}}{CS\sqrt{R} + \sqrt{MN}},
\]
and we use the lower bound $CS\sqrt{R} + \sqrt{MN} \ge CS\sqrt{R}$ in the final term. 

To reduce to a smooth function depending only on $c$, we can take
\[
    g(t_1, t_2, t_3, t_4, t_5) = 
    g_1\left(t_1\right) 
    g_2\left(t_2\right)
    g_3\left(t_3\right)
    g_4\left(t_4\right)
    g_5\left(t_5\right),
\]
for some smooth compactly supported functions $g_i$, where $g_2, g_3, g_4, g_5$ are equal to $1$ on $[1, 2]$.
\end{proof}

\textbf{Acknowledgements.}
The author wishes to thank his advisor, James Maynard, for his kind support and guidance, as well as Sary Drappeau, Lasse Grimmelt, R\'egis de la Bret\`eche, and the referees, for many helpful comments and suggestions. During the time of this work, the author was sponsored by the EPSRC Scholarship at the University of Oxford.

\bibliographystyle{plain}
\bibliography{main}

\end{document}